% SIAM Article Template
\documentclass[onefignum,onetabnum]{siamart171218}

% Information that is shared between the article and the supplement
% (title and author information, macros, packages, etc.) goes into
% ex_shared.tex. If there is no supplement, this file can be included
% directly.
\usepackage{amsmath,amssymb,graphicx,psfrag}
% !TEX root = diss.tex

% Editing Colors

\def\i{{\rm i}}

\def\C{{\mathbb C}}

\def\P{{\mathbb P}}
\def\R{{\mathbb R}}
\def\S{{\mathbb S}}

\def\E{{\mathbb E}}
\def\N{{\mathbb N}}

\def\D{\boldsymbol{D}}

\def\KK{{\mathcal K}}

\def\PP{{\mathcal P}}

\def\TT{{\mathcal T}}

\def\XX{{\mathcal X}}

\newcommand{\dist}[3][]{{\rm d\!l}[\ifthenelse{\equal{#1}{}}{}{#1;}#2,#3]}
\newcommand{\eff}[3][]{{\rm osc}\ifthenelse{\equal{#1}{}}{}{_{#1}}(#2;#3)}

\def\norm#1#2{\|#1\|_{#2}}

\def\set#1#2{\big\{#1\,:\,#2\big\}}

\def\eps{\varepsilon}
%\def\dual#1#2{\langle#1\,,\,#2\rangle}

 % simple-layer potential
 % simple-layer potential
 % double-layer potential
 % hypersingular integral operator
 %
 % Dirichlet-to-Neumann map
 % monotonte operator

\def\normL2#1#2{\|#1\|_{L^2(#2)}}
\newcommand{\dual}[3][]{#1\langle#2\,,\,#3#1\rangle}

%\newtheorem{theorem}{Theorem}
%\newtheorem{proposition}[theorem]{Proposition}
%\newtheorem{lemma}[theorem]{Lemma}
%\newtheorem{corollary}[theorem]{Corollary}
%\newtheorem{algorithm}[theorem]{Algorithm}
%\newtheorem{definition}[theorem]{Definition}
%\newtheorem{remark}[theorem]{Remark}

% SIAM Shared Information Template
% This is information that is shared between the main document and any
% supplement. If no supplement is required, then this information can
% be included directly in the main document.

% Packages and macros go here
\usepackage{lipsum}
\usepackage{amsfonts}
\usepackage{graphicx}
\usepackage{epstopdf}
\usepackage{algorithmic}
\ifpdf
  \DeclareGraphicsExtensions{.eps,.pdf,.png,.jpg}
\else
  \DeclareGraphicsExtensions{.eps}
\fi

% Add a serial/Oxford comma by default.

% Used for creating new theorem and remark environments
\newsiamremark{remark}{Remark}
\newsiamremark{hypothesis}{Hypothesis}
\crefname{hypothesis}{Hypothesis}{Hypotheses}
\newsiamthm{claim}{Claim}

% Sets running headers as well as PDF title and authors
\headers{Improved Efficiency of a Multi-Index FEM}{J. Dick, M. Feischl, C. Schwab}

% Title. If the supplement option is on, then "Supplementary Material"
% is automatically inserted before the title.
\title{Improved Efficiency of a Multi-Index FEM \\ for \\
       Computational Uncertainty Quantification\thanks{Submitted to the editors DATE.
\funding{Supported by the Australian Research Council (ARC) under
grant number DP150101770 (to JD) and DE170100222 (to MF), by the Swiss National Science Foundation
(SNSF) under grant number 200021\_159940 (to CS), and by the Deutsche
  Forschungsgemeinschaft (DFG) through CRC 1173 (to MF).}}}

% Authors: full names plus addresses.
\author{Josef Dick\thanks{School of Mathematics and Statistics,
         The University of New South Wales,
         Sydney 2052, Australia 
  (\email{josef.dick@unsw.edu.au}).}
\and Michael Feischl\thanks{TU Wien, Institute for Analysis and Scientific Computing, Wiedner Hauptstra\ss e 8-10, 1040 Wien.
  (\email{michael.feischl@tuwien.ac.at}).}
\and Christoph Schwab\thanks{SAM, ETH Z\"urich, ETH Zentrum HG G57.1, CH 8092 Z\"urich, Switzerland
  (\email{christoph.schwab@sam.math.ethz.ch }).}}

\usepackage{amsopn}

%%% Local Variables: 
%%% mode:latex
%%% TeX-master: "ex_article"
%%% End: 

%\usepackage{showkeys}

% Optional PDF information
\ifpdf
\hypersetup{
  pdftitle={An Example Article},
  pdfauthor={D. Doe, P. T. Frank, and J. E. Smith}
}
\fi

% The next statement enables references to information in the
% supplement. See the xr-hyperref package for details.

% FundRef data to be entered by SIAM
%<funding-group>
%<award-group>
%<funding-source>
%<named-content content-type="funder-name"> 
%</named-content> 
%<named-content content-type="funder-identifier"> 
%</named-content>
%</funding-source>
%<award-id> </award-id>
%</award-group>
%</funding-group>

\begin{document}

\maketitle 

\begin{abstract}
 We propose a multi-index algorithm for the Monte Carlo (MC) discretization of 
a linear, elliptic PDE with affine-parametric input. 
We prove an error vs. work analysis which allows a multi-level 
finite-element approximation in the physical domain, 
and apply the multi-index analysis with isotropic,
unstructured mesh refinement in the physical domain 
for the solution of the forward problem, for the approximation of the random field, 
and for the Monte-Carlo quadrature error.
Our approach allows Lipschitz domains and  
mesh hierarchies more general than tensor grids. 
The improvement in complexity over multi-level MC FEM is obtained from 
combining spacial discretization, dimension truncation and MC sampling
in a multi-index fashion.
Our analysis improves cost estimates compared to multi-level algorithms 
for similar problems and mathematically underpins the superior practical 
performance of multi-index algorithms for partial differential
equations with random coefficients.
\end{abstract}

\begin{keywords}
Multi-index, Monte Carlo, Finite Element Method, Uncertainty Quantification
\end{keywords}

\begin{AMS}
{\bf subject classification}
\end{AMS}

\maketitle
\section{Introduction}
The term \emph{multi-index Monte Carlo method} (MIMC for short) 
was first coined in the work~\cite{mi} 
as an extension of the multi-level Monte Carlo method 
(MLMC for short) developed in~\cite{giles}.
The MIMC idea abstracts sparse grids and sparse tensor products 
     to approximate  multivariate functions from sparse tensor products of 
     univariate hierarchic approximations in each variable, see 
     the surveys \cite{BunGriebSpG_Acta2004,SG11_518} and the references there.

Since the appearance of \cite{giles}, 
the multi-level idea has been applied in many areas
including high-dimensional integration, stochastic differential equations, and several
types of PDEs with random coefficients.
We refer to \cite{BSZ11_149,EigEtAl_AdMLMC_JUQ2016,GSU2016,SST2017}.
Most of these works addressed MLMC  algorithms, 
while \emph{multi-level quasi-Monte Carlo} (MLQMC for short) algorithms 
for PDEs with random field input data were addressed
only more recently in~\cite{KSSS17_2112,hoqmc,ml1,ml2}.
In the framework of PDEs with random coefficients, 
the idea of the multi-level approach is to introduce sequences of bisection
refined grids and to compute finite element (FE) approximations of a 
given partial differential equation (PDE) with random coefficients on each 
discretization level.
By varying the MC sample size on each level of the FE discretization
and by judicious combination of the individual approximations, 
it is possible to reduce the total cost (up to logarithmic factors) from
${\rm cost}(sampling)\times {\rm cost}(FEM)$ to ${\rm cost}(sampling)+{\rm cost}(FEM)$, 
where the individual cost terms are measured on the finest level. 
 
For example, in linear, elliptic PDEs in divergence form in a bounded domain $D$,
MLMC FEM were introduced in \cite{CGST2011,BSZ11_149}. 
It was shown there that MLMC FEM with continuous, piecewise affine (``$P_1$-FEM'')
finite elements in $D$ can provide a numerically computed
estimate of the mean field (or ``ensemble average'') of the random
solution $u$ (and, as explained in \cite{BSZ11_149}, also of its $2$- and $k$-point correlations)
which satisfies, in $H^1(D)$, essentially optimal (up to logarithmic terms) 
convergence rate bounds $O(h)$ in work which equals, 
in space dimension $d=2$, essentially $O(h^{-2})$.
These asymptotic orders equal the error vs. work relation 
for the solution of \emph{one instance of the corresponding deterministic problem}. 
In \cite{BSZ11_149}, the random input was assumed to consist only of 
a single term in a KL expansion of the random diffusion coefficient.
A similar result, again in space dimension $d=2$, 
for \emph{functionals $G(\cdot)\in H^{-1}(D)$ of the solution} 
was obtained in \cite{multilevel}. There, again $P_1$-FEM in $D$
were employed, but in order to achieve the higher FE convergence 
rate $O(h^2)$ for $G(\cdot)\in L^2(D)$, 
\emph{multi-level Quasi-Monte Carlo} integration over
the ensemble was necessary.

This idea was further extended in~\cite{mi} to include 
more than one parameter which is quantized into levels. 
One possible example for this approach, presented in~\cite{mi},
is to introduce anisotropic discretizations in the physical domain
(as, e.g., sparse grid FE discretizations) for which 
two (three) parameters control the element size in the coordinate direction. 
This `sparse grid' approach has been combined with a heuristic, adaptive algorithm
and a Quasi-Monte Carlo algorithm in~\cite{ami}. 
More examples of variations of this approach can be found in~\cite{mimi1,mimi2}.
In these approaches, the construction of sparse grid hierarchies in the
physical domain to access the multi-index efficiency could impose obstructions
on the shape of the physical domains which are amenable to this kind of
discretization.

In the present work, 
we follow a different (but, as we will show, very natural) 
approach: we  include the approximation 
of the random coefficients into the multi-index discretization and 
convergence analysis. As we show, this is effective due to the following
consideration: apart from toy problems, 
it is often not possible to obtain exact samples of the random coefficients. 
This is usually due to the fact that the random coefficient
is given in terms of some series expansion (Karhunen-Lo\`eve, Schauder, Wavelet,\ldots) for which only finitely many terms can be computed. While other works deal with the cost of computing the individual terms of the expansion (see, e.g.,~\cite{expcomp1,expcomp2}), the present work focuses on the necessary truncation of the series and hence assumes a cost of $\mathcal{O}(1)$ for each term.  
This particular approximation can constitute a major bottleneck in computations and it is therefore of practical importance to improve efficiency of algorithms.

Although the presently proposed approach is, in principle, more general,
we develop it here for 
affine-parametric random coefficients in a standard, linear
Poisson model problem
\begin{align}\label{eq:linpoisson}
 -{\rm div}(A \nabla u) = f\quad\text{in }D \;,
\qquad 
u=0\;\;\mbox{on}\;\; \partial D
\end{align}
for some Lipschitz domain $D\subseteq \R^d$.
We parametrize the uncertain diffusion coefficient, 
assumed to belong to $W^{1,\infty}(D)$, 
by a dimensionally truncated Karhunen-Lo\'eve expansion (``KL expansion'' for short), 
i.e., 
for given $x\in D$ and $\omega\in\Omega$ 
      (the probability space, see Section~\ref{sec:abstract})
\begin{align*}
A(x,\omega)
= \phi_0(x) + \sum_{j=1}^\infty \phi_j(x)\psi_j(\omega_j)
\approx 
A^\nu(x,\omega) := \phi_0(x) + \sum_{j=1}^{s_\nu}\phi_j(x)\psi_j(\omega_j),
\end{align*}
where $\{ s_\nu \}_{\nu \in\N}\subset \N$ is a strictly increasing sequence 
of ``dimension truncation'' parameters. 

%\todo{$\psi_j$ should be explained/ defined here, rather in \eqref{eq:KL0}}

Given a quantity of interest in terms of a linear functional $G(\cdot)$, 
the idea is to approximate the expectation of the 
exact solution $u$ of~\eqref{eq:linpoisson}, i.e., 
$\E(G(u))$ (where the expectation is taken over $\Omega$). 
This is done by computing several instances of the ``double difference'' 
$D_\ell^\nu = (u_\ell^\nu-u_{\ell-1}^\nu)-(u_\ell^{\nu-1}-u_{\ell-1}^{\nu-1})$, 
where $u_\ell^\nu$ denotes the FEM approximation of $u$ on
a mesh of size $h_\ell$ and with respect to the approximation $A^\nu$ of the exact 
(i.e. without truncation) random coefficient. 
As for any multi-level approach, this requires
a sequence of mesh-sizes $h_0,h_1,\ldots,h_\ell$ as introduced in Section~\ref{sec:fem}.
This leads to
\begin{align*}
 \E(G(u))\approx \sum_{0\leq \ell +\nu\leq N}Q_{m_{N-\ell-\nu}}(G(D_\ell^\nu)),
\end{align*}
where $Q_{m_{N-\ell-\nu}}$ denotes a 
MC sampling rule with given sample size $m_{N-\ell-\nu}\in\N$ such that $m_0 < m_1 < \cdots < m_N$.
The main result of this work is to prove that the above approximation is 
(up to logarithmic factors) optimal in the sense that 
it is as good as the approximation given by the naive approach 
$Q_{m_N}(G(u_N^N))$, where all components are computed on the finest level, while reducing the computational cost.

The error/cost estimates from Section~\ref{sec:mi} show that the distribution
of work among the individual levels is optimal up to logarithmic factors. 
This can be seen from the fact that the multi-index algorithm 
achieves the same (up to logarithmic factors) cost versus error ratio 
than the worst ratio of each of the involved algorithms 
(FEM, Monte Carlo, approximation of the random coefficient).
\subsection{Notation}
We will use the symbol $\lesssim$ to denote $\leq C$ for some multiplicative constant $C>0$ which does not depend on the parameters $\nu$, $\ell$, and $\omega$ unless stated otherwise.

%%%%%%%%%%%%%%%%%%%%%%%%%%%%%%%%%%%%%%%%%%%%%%%%%%%%%%%%%%%%%%%%%%%%%%%%%%%%%%%%
\section{Model problem}
\label{sec:ModProb}
%%%%%%%%%%%%%%%%%%%%%%%%%%%%%%%%%%%%%%%%%%%%%%%%%%%%%%%%%%%%%%%%%%%%%%%%%%%%%%%%
%
We chose a simple Poisson model problem to give a concise presentation 
of the ideas and proof techniques. 
The authors are confident that very similar techniques can be used
to include more general model problems. 
Moreover, we focus on the standard case of $H^2$-regularity of the Poisson problem. 
Intermediate cases with less regularity can be included
with the same arguments, but are left out for the sake of clarity.
\subsection{Abstract setting}\label{sec:abstract}

Consider a bounded ``physical domain'' $D\subseteq \R^d$ with Lipschitz boundary
in dimension $d\in\{2,3\}$. 
We model uncertain input data on  a probability space $(\Omega,\Sigma,\P)$. 
The mathematical expectation (``ensemble average'') w.r. to the probability measure $\P$ is denoted by $\E$.
               
Define the parametrized bilinear form 
%\todo{[Attn: $a(...)$ sollte deterministisch fuer ein $A$ definiert werden, und dann fuer
%       $A\to A(x,\omega)$ ausgewertet werden.]}

\begin{align*}
a(A;w,v):=\int_D A(x) \nabla w(x)\cdot \nabla v(x)\,dx
\end{align*}
for a scalar diffusion coefficient $A\colon D \to [0,\infty)$.
To model uncertain input data, we consider random diffusion coefficients
which satisfy $A(\cdot,\omega)\in L^\infty(D)$ for almost all $\omega\in\Omega$.
Precisely, $A$ is assumed a strongly measurable map from $(\Omega,\Sigma)$ 
to the Banach space $L^\infty(D)$, endowed with the Borel sigma algebra.
For $A\in L^\infty(D)$, the bilinear form $a(A;.,.)$ 
is continuous on $H^1_0(D)\times H^1_0(D)$, 
the usual Sobolev space given by

\begin{align*}
 H^1_0(D):=\set{v\in L^2(D)}{\nabla v\in L^2(D)^d,\,v|_{\partial D} =0}.
\end{align*}
We assume at hand a sequence of approximate diffusion coefficients
$(A^\nu)_{\nu\in\N}$ of $A=A^\infty$ 
which satisfy 
$A^\nu(\cdot,\omega)\in W^{1,\infty}(D)$ for almost all $\omega\in\Omega$
as well as
\begin{align}\label{eq:Aconv}
 \lim_{\nu\to\infty}\norm{A-A^\nu}{L^\infty(\Omega;W^{1,\infty}(D))}=0.
\end{align}
Furthermore, we assume 
the existence of deterministic bounds $A_{\rm min}$ and $A_{\rm max}$
such that for every $\nu\in\N\cup\{\infty\}$
\begin{align}\label{eq:minmax}
0<A_{\rm min}\leq \inf_{x\in D}A^\nu(x,\omega)
	\leq \sup_{x\in D}A^\nu(x,\omega) \leq A_{\rm max}<\infty.
\end{align}
To ease notation, we write  $a_\omega^\nu(\cdot,\cdot):=a(A^\nu(\omega),\cdot,\cdot)$.
Finally, suppose the right-hand side $f\in H^{-1}(D)$ (which is the dual space of $H^1_0(D)$). 
We embed $L^2(D)$ in $H^{-1}(D)$ via the compact
embedding $v\mapsto \dual{v}{\cdot}_D$ for all $v\in L^2(D)$.

The assumptions imply ellipticity and continuity of the bilinear form, i.e., 
for almost all $\omega\in\Omega$
\begin{align}\label{eq:ell}
\inf_{\nu\in\N\cup\infty}\inf_{w\in H^1_0(D)}\frac{a_\omega^\nu(w,w)}{\norm{w}{H^1(D)}^2}\geq A_{\rm min}
\end{align}
as well as
\begin{align}\label{eq:cont}
 \sup_{\nu\in\N\cup\infty}\sup_{w,v\in H^1_0(D)}\frac{a_\omega^\nu(w,v)}{\norm{w}{H^1(D)}\norm{v}{H^1(D)}}\leq A_{\rm max}.
\end{align}
In order to simplify some of the dependencies in the following, we assume $A_{\rm min}\leq 1$, which can be always achieved by scaling of $A$ and $f$.

The Lax-Milgram lemma 
implies with \eqref{eq:ell} and \eqref{eq:cont}
unique solvability and continuity of the solution operator.
This implies in particular the existence of a unique random solution $u$ (i.e. a strongly measurable map $u:\Omega \to H^1_0(D)$) 
which is defined pathwise by: 
given $\omega\in \Omega$, find $u(\omega) \in  H^1_0(D)$ such that

\begin{align*}
a(A(\omega);u(\omega),v)=\dual{f}{v}_D\quad\text{for all }v\in H^1_0(D),\; 
\text{ $\mathbb{P}$ a.e.}\; \omega\in\Omega.
\end{align*}

The Lipschitz continuity of the data-to-solution operator 
$S_A: A\to u$ (for fixed source term $f$) 
on the data $A\in L^\infty(D)$ such that \eqref{eq:minmax} holds
implies the strong measurability of $u: \Omega\to H^1_0(D)$.
We are interested in the expectation of a certain quantity of interest $G(\cdot)$
which is a deterministic, bounded linear functional 
$G(\cdot)\colon H^1_0(D)\to \R$, i.e.
\begin{align*}
 \E(G(u))\in \R.
\end{align*}
We assume that $G$ has an-$L^2$ representer, i.e., 
that there exists $g\in L^2(D)$ such that
\begin{align*}
G(v)=\int_D g v\,dx \quad\text{for all }v\in H^1_0(D).
\end{align*}
\subsection{Finite element discretization}\label{sec:fem}
\label{S:FEdisc}
We assume at our disposal
a sequence of nested triangulations $\{\TT_\ell\}_{\ell\in\N}$ 
with corresponding spaces $(\XX_\ell)_{\ell\in\N}$ 
(such that $\XX_\ell\subseteq \XX_k\subset H^1_0(D)$ for all $\ell\leq k$).
We assume the following approximation property of the spaces $\XX_\ell$: 
There exists a constant $C_{\rm approx}>0$ 
and a monotone sequence $\{h_\ell\}_{\ell\in\N}$ with $h_\ell>0$ and with $\lim_\ell h_\ell=0$
such that all $u\in H^{2}(D)$ satisfy
\begin{align}\label{eq:approx}
 \inf_{v\in \XX_{\ell}}\norm{u-v}{H^1(D)}\leq C_{\rm approx} h_\ell\norm{u}{H^2(D)}.
\end{align}
For convenience, we assume $h_{\ell+1}\geq C_{\rm unif} h_\ell$ for all $\ell\in\N$ 
and for some constant $C_{\rm unif}>0$. 
A popular example would be 
based on the nested sequence $\{ \TT_\ell \}_{\ell\geq 0}$ 
of regular, uniform triangulations of $D$
with corresponding decreasing sequence $\{ h_\ell \}_{\ell\geq 0}$ of mesh-widths 
$h_\ell = \max \{ {\rm diam}(T): T\in \TT_\ell \}$. 
The sequence $\{ \XX_\ell \}_{\ell \geq 0}$ of subspaces
can then be chosen as spaces of continuous, 
piecewise-linear functions on $\TT_\ell$.

\medskip

Given the sequence $\{ \XX_\ell \}_{\ell \geq 0}$ of subspaces,
the Galerkin approximation $u_\ell^\nu(\omega)\in \XX_\ell$ is
the solution of
\begin{align*}
 a_\omega^\nu(u_\ell^\nu(\omega),v)
	=
	\dual{f}{v}_D\quad\text{for all }v\in \XX_\ell\text{ and almost all }\omega\in\Omega.
\end{align*}
Unique solvability follows from the Lax-Milgram lemma and~\eqref{eq:ell}--\eqref{eq:cont}.
Consider the solution operators $\S_\ell^\nu(\omega)\colon H^{-1}(D)\to \XX_\ell$ 
defined by $\S_\ell^\nu(\omega) f:= u_\ell^\nu(\omega)$.
Moreover, let $(\S_\ell^\nu(\omega))^{-1}\colon \XX_\ell\to H^{-1}(D)$ 
be defined by
\begin{align*}
((\S_\ell^\nu(\omega))^{-1}u)(v):=a_\omega^\nu(u,v)\quad\text{for all } u\in\XX_\ell,\,v\in H^1_0(D).
\end{align*}
For brevity, we will omit the random parameter and just
write $\S_\ell^\nu:=\S_\ell^\nu(\omega)$. 
Moreover, we write $\S_\infty^\nu f:= u^\nu$, where $u^\nu(\omega)\in H^1_0(D)$ 
is the unique solution of
\begin{align*}
 a_\omega^\nu(u^\nu(\omega),v)=\dual{f}{v}_D\quad\text{for all }v\in H^1_0(D).
\end{align*}
Thus, $u^\nu$ denotes the exact solution corresponding to $A^\nu$ and 
$((\S_\infty^\nu(\omega))^{-1} \cdot ) (v):= a_\omega^\nu(\cdot,v) \in H^{-1}(D)$.

For simplicity of presentation,
we restrict to domains $D\subseteq \R^d$ which admit uniform (w.r. to all MC samples)
$H^2$-regularity of the exact solution as long as $f\in L^2(D)$:
there exists a constant $C_{\rm reg}>0$ such that for all $\omega\in\Omega$ 
and all $\nu\in\N$
\begin{align}\label{eq:reg}
\norm{\S_\infty^\nu f}{H^2(D)}
 \leq 
\frac{C_{\rm reg}}{A_{\rm min}^2}(1+\norm{A^\nu(\omega)}{W^{1,\infty}(D)})\norm{f}{L^2(D)}.
 \end{align}
%for all $f\in L^2(D)$. 

We remark that when the solution of the Poisson equation is $H^2$-regular, 
\eqref{eq:reg} follows as an immediate consequence.

Possible examples of domains $D$ which satisfy this property include 
domains with $C^2$-boundary $\partial D$ or convex domains.
%Note to ourselves: convex domains do have a Lipschitz boundary.

% 
 \begin{lemma}\label{lem:Sl}
  The discrete solution operators 
  $\S_\ell^\nu\colon H^{-1}(D)\to \XX_\ell$ as defined above satisfy for 
  almost all $\omega\in\Omega$ that
  \begin{align*}
   \norm{\S_\ell^\nu }{H^{-1}(D)\to H^1(D)}\leq A_{\rm min}^{-1}
  \end{align*}
as well as
 \begin{align*}
   \norm{(\S_\ell^\nu)^{-1} }{\XX_\ell\to H^{-1}(D)}\leq A_{\rm max}.
  \end{align*}
 \end{lemma}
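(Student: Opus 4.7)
The plan is to establish both bounds by a direct Lax--Milgram-style argument, leveraging the uniform coercivity~\eqref{eq:ell} and continuity~\eqref{eq:cont} estimates that have already been recorded for the bilinear form $a_\omega^\nu$. Neither bound should require anything beyond the definitions and standard Galerkin/duality manipulations; there is no essential obstacle, the proof is primarily bookkeeping.

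For the first estimate, I would fix $f\in H^{-1}(D)$ and let $u_\ell^\nu = \S_\ell^\nu f\in\XX_\ell$ denote the unique Galerkin solution. Testing the discrete variational equation with $v=u_\ell^\nu\in\XX_\ell$ and applying~\eqref{eq:ell} gives
\begin{align*}
A_{\rm min}\,\norm{u_\ell^\nu}{H^1(D)}^2
\;\leq\; a_\omega^\nu(u_\ell^\nu,u_\ell^\nu)
\;=\; \dual{f}{u_\ell^\nu}_D
\;\leq\; \norm{f}{H^{-1}(D)}\,\norm{u_\ell^\nu}{H^1(D)}.
\end{align*}
Dividing by $\norm{u_\ell^\nu}{H^1(D)}$ (or noting the bound trivially in the case $u_\ell^\nu=0$) yields $\norm{\S_\ell^\nu f}{H^1(D)}\leq A_{\rm min}^{-1}\norm{f}{H^{-1}(D)}$, and taking the supremum over $f$ delivers the first claim.

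For the second estimate, the definition of $(\S_\ell^\nu)^{-1}$ gives, for any $u\in\XX_\ell$,
\begin{align*}
\norm{(\S_\ell^\nu)^{-1}u}{H^{-1}(D)}
\;=\; \sup_{0\neq v\in H^1_0(D)}\frac{a_\omega^\nu(u,v)}{\norm{v}{H^1(D)}}
\;\leq\; A_{\rm max}\,\norm{u}{H^1(D)},
\end{align*}
where the inequality is exactly the continuity bound~\eqref{eq:cont} (applied with $w=u$). Since $\XX_\ell\subset H^1_0(D)$ carries the $H^1(D)$-norm, this is the desired operator-norm estimate.

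The only subtlety worth flagging is that both bounds must hold uniformly in $\omega$, $\nu$, and $\ell$; this is automatic from the fact that the constants $A_{\rm min}$ and $A_{\rm max}$ in~\eqref{eq:minmax} are deterministic and apply to every $\nu\in\N\cup\{\infty\}$ and (by~\eqref{eq:ell}--\eqref{eq:cont}) the ellipticity/continuity constants are likewise independent of $\nu$ and $\omega$. Restricting the test space to $\XX_\ell$ in the first step and extending to all of $H^1_0(D)$ in the second is unproblematic because $\XX_\ell\subset H^1_0(D)$.
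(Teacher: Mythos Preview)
Your proof is correct and is precisely the standard Lax--Milgram argument that the paper has in mind when it writes that the result follows immediately from~\eqref{eq:ell}--\eqref{eq:cont}. You have simply spelled out the details of that one-line reference; there is no meaningful difference in approach.
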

\begin{proof}
The result follows immediately from~\eqref{eq:ell}--\eqref{eq:cont}.
\end{proof}

\section{Product structure of the approximation error}\label{sec:proderr}
The main purpose of this section is to prove the product error estimate of Theorem~\ref{thm:proderr} 
below at the end of this section.
This error estimate factors the total error into error contributions of the approximation 
of the random coefficient $A\approx A^\nu$ and finite element approximation error $h_\ell\to 0$.
We will restate several well-known results from finite-element analysis, 
as we will make use of the exact dependence on the constants. 

In view of the multi-index decomposition in Section~\ref{sec:mi}, 
we consider the ``difference of differences''
\begin{align}\label{eq:dd}
 D_\ell^\nu:= (u_\ell^\nu-u_{\ell-1}^\nu) - (u_\ell^{\nu-1}-u_{\ell-1}^{\nu-1})\colon \Omega\to\XX_\ell.
\end{align}
The goal is to get an error estimate of product form, as this will allow us 
to obtain nearly optimal complexity estimates.
The key observation is that the definition of $D_\ell^\nu$ and $\S_\ell^\nu$ implies that
\begin{align*}
 D_\ell^\nu &= (\S_\ell^\nu-\S_{\ell-1}^\nu)f - (\S_\ell^{\nu-1}-\S_{\ell-1}^{\nu-1})f\\
 &= (\S_\ell^\nu-\S_{\ell-1}^\nu)(\S_\ell^\nu)^{-1}(\S_\ell^\nu-\S_{\ell}^{\nu-1})f + \text{remainder},
\end{align*}
where the remainder term will be controlled in Lemma~\ref{lem:proderr0}, below. 
The product form of the first term already suggest the product error estimate which is the goal of this section.

In the following, we use the operator norm for bilinear forms $b(\cdot,\cdot)\colon \XX\times \XX\to \R$ 
for a Hilbert space $\XX$, i.e.,
\begin{align*}
 \norm{b}{} := \sup_{x,y\in\XX\setminus\{0\}}\frac{|b(x,y)|}{\norm{x}{\XX}\norm{y}{\XX}}.
\end{align*}
\begin{lemma}\label{lem:strang}
 Given $A,B\colon \Omega\to L^\infty(D)$, there holds the estimate
 \begin{align*}
   \norm{a(A(\omega),\cdot,\cdot)-a(B(\omega),\cdot,\cdot)}{}
         \leq  \norm{A(\omega)-B(\omega)}{L^\infty(D)} \quad\text{for all }\omega\in\Omega.
 \end{align*}
as well as 
\begin{align*}
	\norm{\S_\ell^\nu f-\S_\ell^\mu f}{H^1(D)}\leq 
	A_{\rm min}^{-2}\norm{A^\nu(\omega)-A^\mu(\omega)}{L^\infty(D)}\norm{f}{L^2(D)}
\end{align*}
for all $\ell,\nu,\mu\in\N$.
\end{lemma}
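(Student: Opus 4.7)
The plan is to handle the two estimates separately, both by direct computation, since this is essentially a standard Strang-type argument.

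For the first estimate, I would exploit linearity of the bilinear form $a(\cdot;w,v)$ in the coefficient. Writing
\[
 a(A(\omega);w,v) - a(B(\omega);w,v) = \int_D (A(\omega)-B(\omega))\,\nabla w\cdot\nabla v\,dx,
\]
a direct application of H\"older's inequality followed by the bound $\|\nabla w\|_{L^2(D)}\leq\|w\|_{H^1(D)}$ yields
\[
 |a(A(\omega);w,v) - a(B(\omega);w,v)| \leq \|A(\omega)-B(\omega)\|_{L^\infty(D)}\,\|w\|_{H^1(D)}\,\|v\|_{H^1(D)},
\]
from which the claim on the operator norm follows by definition.

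For the second estimate, set $u:=\S_\ell^\nu f$ and $w:=\S_\ell^\mu f$ in $\XX_\ell$. By Galerkin orthogonality, $a_\omega^\nu(u,v)=\langle f,v\rangle_D=a_\omega^\mu(w,v)$ for every $v\in\XX_\ell$, so that
\[
 a_\omega^\nu(u-w,v) = a_\omega^\mu(w,v) - a_\omega^\nu(w,v) \quad \text{for all } v\in\XX_\ell.
\]
Testing with $v:=u-w\in\XX_\ell$, using ellipticity~\eqref{eq:ell} on the left and the first part of this lemma on the right, I get
\[
 A_{\rm min}\,\|u-w\|_{H^1(D)}^2 \leq \|A^\nu(\omega)-A^\mu(\omega)\|_{L^\infty(D)}\,\|w\|_{H^1(D)}\,\|u-w\|_{H^1(D)}.
\]

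Dividing by $\|u-w\|_{H^1(D)}$ and then bounding $\|w\|_{H^1(D)} = \|\S_\ell^\mu f\|_{H^1(D)}$ via Lemma~\ref{lem:Sl}, combined with the embedding $\|f\|_{H^{-1}(D)}\leq \|f\|_{L^2(D)}$ (which follows from Cauchy-Schwarz and $\|v\|_{L^2(D)}\leq\|v\|_{H^1(D)}$), gives $\|w\|_{H^1(D)}\leq A_{\rm min}^{-1}\|f\|_{L^2(D)}$ and hence the stated bound with the factor $A_{\rm min}^{-2}$. There is no real obstacle here; the only subtlety worth stating explicitly is that no Galerkin best-approximation estimate is needed---the identity above is exact on the discrete space because both $u$ and $w$ live in $\XX_\ell$---and the same argument passes verbatim to the continuous solution operator $\S_\infty^\nu$ should one later need it.
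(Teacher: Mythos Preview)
Your proof is correct and follows essentially the same route as the paper: a direct H\"older bound for the first estimate, and for the second the standard Strang-type identity obtained by testing $a_\omega^\nu(u-w,\cdot)$ with $u-w$, rewriting via $a_\omega^\nu(u,\cdot)=\langle f,\cdot\rangle_D=a_\omega^\mu(w,\cdot)$, and then invoking ellipticity, the first part, and Lemma~\ref{lem:Sl}. The paper's presentation differs only cosmetically in how the intermediate identity is written out.
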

\begin{proof}
 The first estimate follows since we have for almost all $\omega\in\Omega$ that
 \begin{align*}
 |a(A(\omega),u,v) - a(B(\omega),u,v)|&\leq \int_D|A(x,\omega)-B(x,\omega)||\nabla u||\nabla v|\,dx
  \\
 &\leq \norm{A(\omega)-B(\omega)}{L^\infty(D)}\norm{u}{H^1(D)}\norm{v}{H^1(D)}.
 \end{align*}
 For the second statement, we combine the above with~\eqref{eq:ell}, and Lemma~\ref{lem:Sl}, 
 to obtain
 \begin{align*}
 A_{\rm min}\norm{\S_\ell^\nu f-\S_\ell^\mu f}{H^1(D)}^2
 &\leq
 a_\omega^\nu(\S_\ell^\nu f-\S_\ell^\mu f,\S_\ell^\nu f-\S_\ell^\mu f)\\
 &=\dual{f}{\S_\ell^\nu f-\S_\ell^\mu f}_D-a_\omega^\nu(\S_\ell^\mu f,\S_\ell^\nu f-\S_\ell^\mu f)\\
 &=
 (a_\omega^\mu-a_\omega^\nu)(\S_\ell^\mu f,\S_\ell^\nu f-\S_\ell^\mu f)\\
 &\leq A_{\rm min}^{-1}
 \norm{A^\nu-A^\mu}{L^\infty(D)}\norm{f}{L^2(D)}\norm{\S_\ell^\nu f-\S_\ell^\mu f}{H^1(D)}
 \end{align*}
for all $\omega\in\Omega$. 
This concludes the proof.
\end{proof}

\begin{lemma}[Galerkin orthogonality]\label{lem:galorth}
 There holds Galerkin orthogonality for all $k,\ell\in\N\cup\{\infty\}$, $\nu\in\N$ 
 and all $f\in H^{-1}(D)$ in the form
 \begin{align*}
  a_\omega^\nu(\S_k^\nu f,v)=a_\omega^\nu(\S_\ell^\nu f,v)\quad\text{for all } v\in \XX_{\min\{\ell,k\}}
                                                        \text{ and all }\omega\in\Omega.
 \end{align*}
 Particularly, this implies $\S_\ell^\nu(\S_k^\nu)^{-1}={\rm id}_{\XX_k}$ for all $\ell\geq k$ and $k<\infty$.
\end{lemma}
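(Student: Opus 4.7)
The plan is to exploit the variational characterizations of the Galerkin solutions together with the nesting assumption $\XX_\ell\subseteq\XX_k$ for $\ell\leq k$. First, I would write the defining relations $a_\omega^\nu(\S_k^\nu f,v)=\dual{f}{v}_D$ for $v\in\XX_k$ and $a_\omega^\nu(\S_\ell^\nu f,v)=\dual{f}{v}_D$ for $v\in\XX_\ell$, and restrict both to test functions $v\in\XX_{\min\{k,\ell\}}$. By nestedness, this common subspace sits inside both $\XX_k$ and $\XX_\ell$, so both right-hand sides agree and the first identity follows by subtraction.

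The borderline cases $k=\infty$ or $\ell=\infty$ require no separate argument: the exact solution $\S_\infty^\nu f$ satisfies the variational equation against all of $H^1_0(D)$, and in particular against any $v\in\XX_j\subseteq H^1_0(D)$, so the preceding reasoning goes through verbatim.

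For the ``particularly'' assertion, fix $\ell\geq k$ with $k<\infty$ and $u\in\XX_k$. The plan is to unfold the definition of $(\S_k^\nu)^{-1}$: as a functional in $H^{-1}(D)$ it acts on $v\in H^1_0(D)$ by $v\mapsto a_\omega^\nu(u,v)$. Consequently $w:=\S_\ell^\nu(\S_k^\nu)^{-1}u\in\XX_\ell$ is the Galerkin solution characterized by $a_\omega^\nu(w,v)=a_\omega^\nu(u,v)$ for all $v\in\XX_\ell$. But $u\in\XX_k\subseteq\XX_\ell$ trivially satisfies this same discrete problem, so uniqueness supplied by ellipticity~\eqref{eq:ell} forces $w=u$.

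There is no genuine obstacle here; the lemma is a textbook expression of Galerkin orthogonality inside a nested trial/test hierarchy. The only points that deserve care are the uniform handling of the index $\infty$ via the inclusion $\XX_j\subseteq H^1_0(D)$, and the unambiguous reading of $(\S_k^\nu)^{-1}$ as a map $\XX_k\to H^{-1}(D)$ (provided by Lemma~\ref{lem:Sl}) on which $\S_\ell^\nu$ may then act.
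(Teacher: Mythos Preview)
Your proposal is correct and follows essentially the same approach as the paper: both sides of the first identity are shown to equal $\dual{f}{v}_D$ by the defining Galerkin relations (using nestedness to restrict to $\XX_{\min\{k,\ell\}}$), and the second statement is obtained by unfolding the definition of $(\S_k^\nu)^{-1}$ and invoking uniqueness via ellipticity. The paper's proof is terser but identical in substance.
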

\begin{proof}
By definition, we have
 \begin{align*}
  a_\omega^\nu(\S_k^\nu f,v)=\dual{f}{v}_D=a_\omega^\nu(\S_\ell^\nu f,v).
 \end{align*}
To see the second statement, note that for $v\in\XX_k$ and $w\in\XX_\ell$, 
there holds by definition of the inverse
\begin{align*}
a_\omega^\nu(\S_\ell^\nu(\S_k^\nu)^{-1}v,w)=((\S_k^\nu)^{-1}v)(w)=a_\omega^\nu(v,w).
\end{align*}
This and the positive definiteness of 
the bilinear form $a_\omega^\nu(\cdot,\cdot)$ conclude the proof.
\end{proof}

For the next lemma, we define the energy norm
\begin{align*}
 \norm{u}{\omega,\nu}:=(a_\omega^\nu(u,u))^{1/2}.
\end{align*}
Note that~\eqref{eq:ell}--\eqref{eq:cont} ensure 
$A_{\rm min}^{1/2}\norm{\cdot}{H^1(D)}
   \leq  \norm{\cdot}{\omega,\nu}
   \leq A_{\rm max}^{1/2}\norm{\cdot}{H^1(D)}$ 
for almost all $\omega\in\Omega$ and for all $\nu\in \N$.

There holds the following variant of C\'ea's lemma:
\begin{lemma}[C\'ea's lemma]\label{lem:cea}
 For $v\colon \Omega\to \XX_\ell$, $\omega\in\Omega$, and $k\leq \ell$, we have
 \begin{align*}
  \norm{(\S_{\ell}^\mu(\S_\ell^\mu)^{-1}-\S_{k}^{\mu}(\S_\ell^\mu)^{-1})v(\omega)}{H^1(D)}
  & \leq A_{\rm min}^{-1/2}\inf_{w\in \XX_k}\norm{v(\omega)-w}{\omega,\mu}\\
  &\leq A_{\rm min}^{-1/2}A_{\rm max}^{1/2}\inf_{w\in \XX_k}\norm{v(\omega)-w}{H^1(D)}.
 \end{align*}
\end{lemma}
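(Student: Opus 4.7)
The plan is to reduce this to the standard C\'ea best-approximation argument by recognizing the operator $\S_k^\mu(\S_\ell^\mu)^{-1}v(\omega)$ as an energy-orthogonal projection of $v(\omega)$ onto $\XX_k$.

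First, since $v(\omega)\in\XX_\ell$ by assumption, Lemma~\ref{lem:galorth} gives $\S_\ell^\mu(\S_\ell^\mu)^{-1}v(\omega)=v(\omega)$, so the quantity to estimate collapses to
\begin{align*}
\norm{v(\omega)-\S_k^\mu(\S_\ell^\mu)^{-1}v(\omega)}{H^1(D)}.
\end{align*}
Define the auxiliary right-hand side $f^\star:=(\S_\ell^\mu)^{-1}v(\omega)\in H^{-1}(D)$. Then $\S_\ell^\mu f^\star=v(\omega)$ and $\S_k^\mu(\S_\ell^\mu)^{-1}v(\omega)=\S_k^\mu f^\star$. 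Thus it suffices to bound $\norm{\S_\ell^\mu f^\star-\S_k^\mu f^\star}{H^1(D)}$.

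Second, I would invoke Galerkin orthogonality (Lemma~\ref{lem:galorth}) applied at the two levels $\ell$ and $k$ with $k\le\ell$: for every $w\in\XX_k\subseteq\XX_\ell$,
\begin{align*}
a_\omega^\mu\bigl(\S_\ell^\mu f^\star-\S_k^\mu f^\star,\,w\bigr)=\dual{f^\star}{w}_D-\dual{f^\star}{w}_D=0.
\end{align*}
Hence $\S_k^\mu f^\star$ is the $a_\omega^\mu$-orthogonal projection of $v(\omega)=\S_\ell^\mu f^\star$ onto $\XX_k$. Since $a_\omega^\mu(\cdot,\cdot)$ is a symmetric, positive-definite inner product on $H^1_0(D)$ (by \eqref{eq:ell}), the Pythagorean identity yields the best-approximation property in the energy norm,
\begin{align*}
\norm{v(\omega)-\S_k^\mu f^\star}{\omega,\mu}=\inf_{w\in\XX_k}\norm{v(\omega)-w}{\omega,\mu}.
\end{align*}

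Third, I would pass from the energy norm back to the $H^1$-norm using the equivalence $A_{\min}^{1/2}\norm{\cdot}{H^1(D)}\le\norm{\cdot}{\omega,\mu}\le A_{\max}^{1/2}\norm{\cdot}{H^1(D)}$ noted just before the lemma. Bounding the left-hand side from below by $A_{\min}^{1/2}\norm{\cdot}{H^1(D)}$ gives the first stated inequality, and bounding the right-hand side from above by $A_{\max}^{1/2}\norm{\cdot}{H^1(D)}$ inside the infimum gives the second.

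I do not anticipate a real obstacle: the key conceptual step is realizing that the composition $\S_k^\mu(\S_\ell^\mu)^{-1}$ acts on an element of $\XX_\ell$ as an energy-orthogonal projection onto $\XX_k$, after which the proof is the textbook C\'ea argument together with the norm equivalence. The only point requiring mild care is to keep the space for the test function in Galerkin orthogonality at level $\min\{\ell,k\}=k$, which is precisely what Lemma~\ref{lem:galorth} supplies under the hypothesis $k\le\ell$.
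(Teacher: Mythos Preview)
Your proposal is correct and follows essentially the same approach as the paper: Galerkin orthogonality to recognize $\S_k^\mu(\S_\ell^\mu)^{-1}$ as the $a_\omega^\mu$-orthogonal projection onto $\XX_k$, followed by the standard C\'ea best-approximation bound and the norm equivalence $A_{\min}^{1/2}\norm{\cdot}{H^1(D)}\le\norm{\cdot}{\omega,\mu}\le A_{\max}^{1/2}\norm{\cdot}{H^1(D)}$. The only cosmetic difference is that you invoke the Pythagorean identity (yielding equality in the energy norm), whereas the paper writes out the energy norm squared and applies Cauchy--Schwarz; both routes give the stated inequality.
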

\begin{proof}
For almost all $\omega\in\Omega$, Galerkin orthogonality guarantees for all $w\in \XX_k$
\begin{align*}
  a_\omega^\mu\big((\S_{\ell}^\mu(\S_\ell^\mu)^{-1}-\S_{k}^{\mu}(\S_\ell^\mu)^{-1})v&,(\S_{\ell}^\mu(\S_\ell^\mu)^{-1}-\S_{k}^{\mu}(\S_\ell^\mu)^{-1})v\big)\\
  &= a_\omega^\mu\big((\S_{\ell}^\mu(\S_\ell^\mu)^{-1}-\S_{k}^{\mu}(\S_\ell^\mu)^{-1})v,\S_{\ell}^\mu(\S_\ell^\mu)^{-1}v-w\big).
\end{align*}
Since $a_\omega^\nu$ is a scalar product with respective norm $\norm{\cdot}{\omega,\nu}$, 
we have
\begin{align*}
 a_\omega^\mu\big((\S_{\ell}^\mu(\S_\ell^\mu)^{-1}&-\S_{k}^{\mu}(\S_\ell^\mu)^{-1})v,\S_{\ell}^\mu(\S_\ell^\mu)^{-1}v-w\big)\\
 &\leq \norm{(\S_{\ell}^\mu(\S_\ell^\mu)^{-1}-\S_{k}^{\mu}(\S_\ell^\mu)^{-1})v}{\omega,\mu}\norm{\S_{\ell}^\mu(\S_\ell^\mu)^{-1}v-w}{\omega,\mu}.
\end{align*}
Norm equivalence 
$A_{\rm min}^{1/2}\norm{\cdot}{H^1(D)}\leq  \norm{\cdot}{\omega,\nu}\leq A_{\rm max}^{1/2}\norm{\cdot}{H^1(D)}$ uniformly in $\omega$
and the fact that $\omega$ was arbitrary conclude the proof.
\end{proof}

The following lemma bounds the difference of the Galerkin projections $\S_{k}^\nu(\S_\ell^\nu)^{-1}$ for different parameters $\nu$. 
\begin{lemma}\label{lem:projection}
 There holds for $\ell,k,\nu,\mu\in\N$, all $v\colon \Omega\to \XX_\ell$, and all $\omega\in\Omega$
 \begin{align*}
  \norm{(\S_{k}^\nu(\S_\ell^\nu)^{-1}&-\S_{k}^{\mu}(\S_\ell^\mu)^{-1})v(\omega)}{H^1(D)}\\
  &\leq C_{\rm proj}(\omega)
  \norm{(A^\nu-A^\mu)(\omega)}{L^\infty(D)}\inf_{w\in \XX_k}\norm{v(\omega)-w}{H^1(D)},
 \end{align*}
where $C_{\rm proj}(\omega):= A_{\rm min}^{-3/2}A_{\rm max}^{1/2}$.
\end{lemma}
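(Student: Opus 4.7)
Write $P_k^\nu := \S_k^\nu(\S_\ell^\nu)^{-1}$ and $P_k^\mu := \S_k^\mu(\S_\ell^\mu)^{-1}$. By Lemma~\ref{lem:galorth}, these are the Galerkin projections $\XX_\ell \to \XX_k$ with respect to the inner products $a_\omega^\nu$ and $a_\omega^\mu$, respectively, and in particular both $P_k^\nu v$ and $P_k^\mu v$ lie in $\XX_k$. Hence $e(\omega) := (P_k^\nu - P_k^\mu)v(\omega) \in \XX_k$. The idea is the classical ``Strang-type'' argument: estimate $e$ in the $\nu$-energy, turn the $\nu$-projection identity on $P_k^\nu v$ into a $\mu$-projection identity modulo the defect $a_\omega^\nu - a_\omega^\mu$, and then read off the claim.

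Concretely, first I would invoke ellipticity \eqref{eq:ell} on $e\in\XX_k\subset H^1_0(D)$ to write
\begin{align*}
A_{\rm min}\norm{e}{H^1(D)}^2 \leq a_\omega^\nu(e,e) = a_\omega^\nu(P_k^\nu v, e) - a_\omega^\nu(P_k^\mu v, e).
\end{align*}
Then I apply Galerkin orthogonality twice (Lemma~\ref{lem:galorth}): since $e\in\XX_k$, we have $a_\omega^\nu(P_k^\nu v, e) = a_\omega^\nu(v,e)$ and similarly $a_\omega^\mu(P_k^\mu v, e) = a_\omega^\mu(v,e)$. Writing $a_\omega^\nu(P_k^\mu v,e) = a_\omega^\mu(P_k^\mu v,e) + (a_\omega^\nu - a_\omega^\mu)(P_k^\mu v, e)$ and substituting, the $v$-terms combine into $(a_\omega^\nu - a_\omega^\mu)(v,e)$, yielding the tidy identity
\begin{align*}
a_\omega^\nu(e,e) = (a_\omega^\nu - a_\omega^\mu)(v - P_k^\mu v,\, e).
\end{align*}

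Next I would apply Lemma~\ref{lem:strang} to the bilinear-form difference, obtaining
\begin{align*}
a_\omega^\nu(e,e) \leq \norm{A^\nu(\omega)-A^\mu(\omega)}{L^\infty(D)}\,\norm{v(\omega)-P_k^\mu v(\omega)}{H^1(D)}\,\norm{e}{H^1(D)},
\end{align*}
and dividing by $A_{\rm min}\norm{e}{H^1(D)}$ gives the bound with a factor $A_{\rm min}^{-1}$. Finally, since $P_k^\mu v = \S_k^\mu(\S_\ell^\mu)^{-1}v$ and $v = \S_\ell^\mu(\S_\ell^\mu)^{-1}v$ on $\XX_\ell$ (Lemma~\ref{lem:galorth}), Lemma~\ref{lem:cea} provides the quasi-optimality estimate
\begin{align*}
\norm{v - P_k^\mu v}{H^1(D)} \leq A_{\rm min}^{-1/2}A_{\rm max}^{1/2}\inf_{w\in\XX_k}\norm{v(\omega)-w}{H^1(D)}.
\end{align*}
Multiplying the two factors of $A_{\rm min}^{-1}$ and $A_{\rm min}^{-1/2}A_{\rm max}^{1/2}$ yields the asserted constant $C_{\rm proj} = A_{\rm min}^{-3/2}A_{\rm max}^{1/2}$.

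The only slightly subtle step is the algebraic manipulation in the second paragraph, where one must carefully keep track of which form ($a_\omega^\nu$ or $a_\omega^\mu$) is being used for Galerkin orthogonality; the symmetric combination is what forces the test function to be $v - P_k^\mu v$ rather than just $v$, and this is precisely what allows the final bound to scale with $\inf_{w\in\XX_k}\norm{v-w}{H^1(D)}$ instead of merely $\norm{v}{H^1(D)}$. Everything else is routine application of the already-stated lemmas.
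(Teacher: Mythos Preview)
Your proof is correct and follows essentially the same route as the paper's: both derive the identity $a_\omega^\nu(e,e)=(a_\omega^\nu-a_\omega^\mu)(v-P_k^\mu v,\,e)$ via Galerkin orthogonality, then apply Lemma~\ref{lem:strang} and Lemma~\ref{lem:cea} to obtain the constant $A_{\rm min}^{-3/2}A_{\rm max}^{1/2}$. The only cosmetic difference is that the paper explicitly disposes of the trivial case $k\geq\ell$ first (where both projections equal $\mathrm{id}_{\XX_\ell}$), whereas your argument implicitly covers it since then $v-P_k^\mu v=0$.
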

\begin{proof}
For $k\geq \ell$, we have $\S_{k}^\nu(\S_\ell^\nu)^{-1}={\rm id}_{\XX_\ell}=\S_{k}^\mu(\S_\ell^\mu)^{-1}$ and thus the assertion holds trivially.
Assume $k< \ell$. 
Define $v_k:=(\S_{k}^\nu(\S_\ell^\nu)^{-1}-\S_{k}^{\mu}(\S_\ell^\mu)^{-1})v\colon \Omega\to \XX_\ell$.
 Ellipticity~\eqref{eq:ell} of $a_\omega^\nu(\cdot,\cdot)$ together with Galerkin orthogonality shows for $\omega\in\Omega$
 \begin{align*}
  A_{\rm min}\norm{v_k(\omega)}{H^1(D)}^2  &\leq
  a_\omega^\nu(v_k(\omega),v_k(\omega))=a_\omega^\nu((\S_{\ell}^\nu(\S_\ell^\nu)^{-1}-\S_{k}^{\mu}(\S_\ell^\mu)^{-1})v(\omega),v_k(\omega)).
 \end{align*}
Since $\S_{\ell}^\nu(\S_\ell^\nu)^{-1}={\rm id}_{\XX_\ell}=\S_{\ell}^\mu(\S_\ell^\mu)^{-1}$, we have
 \begin{align*}
  A_{\rm min}\norm{v_k(\omega)}{H^1(D)}^2  &\leq
  a_\omega^\nu((\S_{\ell}^\mu(\S_\ell^\mu)^{-1}-\S_{k}^{\mu}(\S_\ell^\mu)^{-1})v(\omega),v_k(\omega))\\
  &=
  a_\omega^\mu((\S_{\ell}^\mu(\S_\ell^\mu)^{-1}-\S_{k}^{\mu}(\S_\ell^\mu)^{-1})v(\omega),v_k(\omega))\\
  &\quad + (a_\omega^\nu-a_\omega^\mu)((\S_{\ell}^\mu(\S_\ell^\mu)^{-1}-\S_{k}^{\mu}(\S_\ell^\mu)^{-1})v(\omega),v_k(\omega)).
\end{align*}
The first term on the right-hand side above is zero due to Galerkin orthogonality. Therefore, we obtain
\begin{align}\label{eq:help1}
  \norm{v_k(\omega)}{H^1(D)}^2
  &\lesssim  A_{\rm min}^{-1}\norm{a_{\omega}^\nu-a_{\omega}^\mu}{}\norm{(\S_{\ell}^\mu(\S_\ell^\mu)^{-1}-\S_{k}^{\mu}(\S_\ell^\mu)^{-1})v(\omega)}{H^1(D)}
  \norm{v_k(\omega)}{H^1(D)}.  
\end{align}
As shown in Lemma~\ref{lem:strang}, there holds $\norm{a_{\omega}^\nu-a_{\omega}^\mu}{}\leq \norm{(A^\nu-A^\mu)(\omega)}{L^\infty(D)}$. Moreover, we have by C\'ea's lemma (Lemma~\ref{lem:cea})
\begin{align*}
 \norm{(\S_{\ell}^\mu(\S_\ell^\mu)^{-1}-\S_{k}^{\mu}(\S_\ell^\mu)^{-1})v(\omega)}{H^1(D)}\leq A_{\rm min}^{-1/2}A_{\rm max}^{1/2}\inf_{w\in \XX_k}\norm{v(\omega)-w}{H^1(D)}.
\end{align*}
This together with~\eqref{eq:help1} concludes the proof.
\end{proof}

For the statement of the next result, we
recall the definition of the double difference $D_\ell^\nu$ in~\eqref{eq:dd}.

\begin{lemma}\label{lem:proderr0}
 There holds for all $\omega\in\Omega$ and $\ell\geq 1$
 \begin{align}\label{eq:proderr0}
 \begin{split}
\norm{D_\ell^\nu(\omega)}{H^1(D)}
  &\leq \norm{(\S_\ell^\nu-\S_{\ell-1}^\nu)(\S_\ell^\nu)^{-1}(\S_\ell^\nu-\S_{\ell}^{\nu-1})f}{H^1(D)}\\
  &\qquad + 
  C_{\rm proj}(\omega)\norm{(A^\nu-A^{\nu-1})(\omega)}{L^\infty(D)}\inf_{v\in \XX_{\ell-1}}\norm{u_\ell^{\nu-1}(\omega)-v}{H^1(D)},
\end{split}
  \end{align}
  where $C_{\rm proj}>0$ is defined in Lemma~\ref{lem:projection}.
\end{lemma}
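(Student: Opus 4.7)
The plan is to algebraically decompose $D_\ell^\nu$ so that a pure product of difference operators appears, with a remainder that is a difference of Galerkin projections to which Lemma~\ref{lem:projection} directly applies.

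First, I would introduce the shorthand $\Pi_{k}^\nu := \S_{k}^\nu (\S_\ell^\nu)^{-1}$ (with $\ell$ fixed) for the Galerkin projection from $\XX_\ell$ onto $\XX_k$, so that $u_\ell^\nu - u_{\ell-1}^\nu = (\id_{\XX_\ell} - \Pi_{\ell-1}^\nu) u_\ell^\nu$ and similarly for $\nu-1$ (here using $\S_\ell^\nu(\S_\ell^\nu)^{-1} = \id_{\XX_\ell}$ from Lemma~\ref{lem:galorth}). Writing $\Pi_{\ell-1}^\nu u_\ell^\nu - \Pi_{\ell-1}^{\nu-1} u_\ell^{\nu-1} = \Pi_{\ell-1}^\nu (u_\ell^\nu - u_\ell^{\nu-1}) + (\Pi_{\ell-1}^\nu - \Pi_{\ell-1}^{\nu-1}) u_\ell^{\nu-1}$ and collecting terms gives the key identity
\begin{align*}
D_\ell^\nu = (\id_{\XX_\ell} - \Pi_{\ell-1}^\nu)(u_\ell^\nu - u_\ell^{\nu-1}) - \bigl(\S_{\ell-1}^\nu(\S_\ell^\nu)^{-1} - \S_{\ell-1}^{\nu-1}(\S_\ell^{\nu-1})^{-1}\bigr) u_\ell^{\nu-1}.
\end{align*}

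Next, I would recognize the first summand on the right as the product term in the target estimate. Since $u_\ell^\nu - u_\ell^{\nu-1} = (\S_\ell^\nu - \S_\ell^{\nu-1})f$ and $\id_{\XX_\ell} - \Pi_{\ell-1}^\nu = (\S_\ell^\nu - \S_{\ell-1}^\nu)(\S_\ell^\nu)^{-1}$ (again by Lemma~\ref{lem:galorth}), this summand equals $(\S_\ell^\nu - \S_{\ell-1}^\nu)(\S_\ell^\nu)^{-1}(\S_\ell^\nu - \S_\ell^{\nu-1})f$, whose $H^1(D)$-norm is precisely the first term on the right-hand side of \eqref{eq:proderr0}.

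Finally, I would estimate the second summand. Since $u_\ell^{\nu-1}$ is a (strongly measurable) map $\Omega \to \XX_\ell$, Lemma~\ref{lem:projection} applies with $v := u_\ell^{\nu-1}$, $k := \ell-1$ and the pair $(\nu,\mu) := (\nu,\nu-1)$, yielding
\begin{align*}
\bigl\|\bigl(\S_{\ell-1}^\nu(\S_\ell^\nu)^{-1} - \S_{\ell-1}^{\nu-1}(\S_\ell^{\nu-1})^{-1}\bigr) u_\ell^{\nu-1}(\omega)\bigr\|_{H^1(D)}
\leq C_{\rm proj}(\omega) \|(A^\nu - A^{\nu-1})(\omega)\|_{L^\infty(D)} \inf_{v\in\XX_{\ell-1}}\|u_\ell^{\nu-1}(\omega) - v\|_{H^1(D)}.
\end{align*}
Combining the two bounds via the triangle inequality finishes the proof.

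The entire argument is essentially algebraic; the main (minor) obstacle is selecting the right decomposition of $D_\ell^\nu$. The choice above, pivoting through $\Pi_{\ell-1}^\nu u_\ell^{\nu-1}$, is what makes the first term reduce to the desired product $(\S_\ell^\nu - \S_{\ell-1}^\nu)(\S_\ell^\nu)^{-1}(\S_\ell^\nu - \S_\ell^{\nu-1})f$ while simultaneously casting the remainder into the form covered by Lemma~\ref{lem:projection}; any other pairing would either spoil the product structure or require a projection identity that has not been established.
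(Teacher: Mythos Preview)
Your proposal is correct and follows essentially the same route as the paper: both decompose $D_\ell^\nu$ into the product term $(\S_\ell^\nu-\S_{\ell-1}^\nu)(\S_\ell^\nu)^{-1}(\S_\ell^\nu-\S_\ell^{\nu-1})f$ plus the projection-difference $(\S_{\ell-1}^\nu(\S_\ell^\nu)^{-1}-\S_{\ell-1}^{\nu-1}(\S_\ell^{\nu-1})^{-1})u_\ell^{\nu-1}$, then invoke Lemma~\ref{lem:projection}. Your projection shorthand $\Pi_{\ell-1}^\nu$ makes the presentation slightly cleaner by absorbing into the identity $u_{\ell-1}^{\nu-1}=\Pi_{\ell-1}^{\nu-1}u_\ell^{\nu-1}$ the extra triangle-inequality step the paper writes out explicitly, but the substance is identical.
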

\begin{proof}
Elementary manipulation of~\eqref{eq:dd} together with $\S^\nu_\ell(\S_k^\nu)^{-1}={\rm id}_{\XX_k}$, $k\leq \ell$ from Lemma~\ref{lem:galorth} show
\begin{align*}
 D_\ell^\nu&= ((\S_\ell^\nu-\S_{\ell-1}^\nu)-(\S_\ell^{\nu-1}-\S_{\ell-1}^{\nu-1}))f\\
 &= (\S_\ell^\nu-\S_{\ell-1}^\nu)(\S_\ell^\nu)^{-1}(\S_\ell^\nu-\S_{\ell}^{\nu-1})f-(\S_{\ell-1}^\nu(\S_\ell^\nu)^{-1}\S_\ell^{\nu-1}-\S_{\ell-1}^{\nu-1})f.
\end{align*}
The last term on the right-hand side satisfies
\begin{align}\label{eq:inter}
\begin{split}
 \norm{(\S_{\ell-1}^\nu(\S_\ell^\nu)^{-1}\S_\ell^{\nu-1}&-\S_{\ell-1}^{\nu-1})f}{H^1(D)}\\
 &\leq 
  \norm{(\S_{\ell-1}^{\nu-1}(\S_\ell^{\nu-1})^{-1}\S_\ell^{\nu-1}-\S_{\ell-1}^{\nu-1})f}{H^1(D)}\\
  &\qquad+
   \norm{(\S_{\ell-1}^\nu(\S_\ell^\nu)^{-1}-\S_{\ell-1}^{\nu-1}(\S_\ell^{\nu-1})^{-1})\S_\ell^{\nu-1}f}{H^1(D)}.
   \end{split}
\end{align}
The first term on the right-hand side satisfies for all $v\in \XX_{\ell-1}$
\begin{align*}
a_\omega^\nu( (\S_{\ell-1}^{\nu-1}(\S_\ell^{\nu-1})^{-1}\S_\ell^{\nu-1}-\S_{\ell-1}^{\nu-1})f,v)=
a_\omega^\nu( (\S_{\ell}^{\nu-1}(\S_\ell^{\nu-1})^{-1}\S_\ell^{\nu-1}-\S_{\ell}^{\nu-1})f,v)=0
\end{align*}
and thus $\norm{(\S_{\ell-1}^{\nu-1}(\S_\ell^{\nu-1})^{-1}\S_\ell^{\nu-1}-\S_{\ell-1}^{\nu-1})f}{H^1(D)}=0$.
For the second term on the right-hand side of~\eqref{eq:inter}, Lemma~\ref{lem:projection} with $\mu=\nu-1$ and $k=\ell-1$ proves
\begin{align*}
 \norm{(\S_{\ell-1}^\nu(\S_\ell^\nu)^{-1}&-\S_{\ell-1}^{\nu-1}(\S_\ell^{\nu-1})^{-1})\S_\ell^{\nu-1}f}{H^1(D)}\\
 &\lesssim \norm{A^\nu(\omega)-A^{\nu-1}(\omega)}{L^\infty(D)}\inf_{v\in \XX_{\ell-1}}\norm{u_\ell^{\nu-1}(\omega)-v}{H^1(D)}.
\end{align*}
Altogether, this concludes the proof.
\end{proof}

The following result is well-known and we reprove it in our setting for the convenience of the reader.
\begin{lemma}[Aubin-Nitsche duality]\label{lem:aubnit}
For all $v\in H^1_0(D)$, there holds
\begin{align*}
\norm{v-\S_\ell^\nu (\S_\infty^\nu)^{-1} v}{L^2(D)}
\leq 
C_{\rm approx}\frac{C_{\rm reg}}{A_{\rm min}^2}(1+\norm{A^\nu(\omega)}{W^{1,\infty}(D)})
h_\ell \norm{v}{H^1(D)}.
\end{align*}

\end{lemma}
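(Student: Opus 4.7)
The plan is to execute the standard Aubin--Nitsche duality argument, adapted to keep track of how the constants depend on $A^\nu(\omega)$ through the $H^2$-regularity estimate \eqref{eq:reg}.

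First I would set $w(\omega) := v - \S_\ell^\nu(\S_\infty^\nu)^{-1} v$ and introduce the dual problem: find $z \in H^1_0(D)$ such that
\begin{align*}
a_\omega^\nu(\phi, z) = \int_D w\,\phi\,dx \quad \text{for all } \phi \in H^1_0(D).
\end{align*}
Since $w\in L^2(D)\hookrightarrow H^{-1}(D)$, the hypothesis \eqref{eq:reg} yields $z \in H^2(D)$ with
\begin{align*}
\norm{z}{H^2(D)} \leq \frac{C_{\rm reg}}{A_{\rm min}^2}(1+\norm{A^\nu(\omega)}{W^{1,\infty}(D)})\norm{w}{L^2(D)}.
\end{align*}
Testing with $\phi = w$ produces the basic identity $\norm{w}{L^2(D)}^2 = a_\omega^\nu(w,z)$.

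Next I would invoke Galerkin orthogonality. Writing $g := (\S_\infty^\nu)^{-1} v \in H^{-1}(D)$ we have $w = \S_\infty^\nu g - \S_\ell^\nu g$, so Lemma~\ref{lem:galorth} gives $a_\omega^\nu(w, z_\ell) = 0$ for every $z_\ell \in \XX_\ell$. Hence
\begin{align*}
\norm{w}{L^2(D)}^2 = a_\omega^\nu(w, z-z_\ell) \leq A_{\rm max}\, \norm{w}{H^1(D)}\, \inf_{z_\ell \in \XX_\ell}\norm{z-z_\ell}{H^1(D)}
\end{align*}
by continuity \eqref{eq:cont}. The approximation property \eqref{eq:approx} then bounds the infimum by $C_{\rm approx} h_\ell \norm{z}{H^2(D)}$.

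It remains to estimate $\norm{w}{H^1(D)}$ in terms of $\norm{v}{H^1(D)}$. I would use Lemma~\ref{lem:Sl} twice: $\norm{\S_\ell^\nu(\S_\infty^\nu)^{-1}v}{H^1(D)}\leq A_{\rm min}^{-1}\norm{(\S_\infty^\nu)^{-1}v}{H^{-1}(D)}\leq A_{\rm min}^{-1}A_{\rm max}\norm{v}{H^1(D)}$, together with the triangle inequality, to give $\norm{w}{H^1(D)} \leq (1+A_{\rm min}^{-1}A_{\rm max})\norm{v}{H^1(D)}$. Substituting everything and cancelling one factor of $\norm{w}{L^2(D)}$ from both sides produces the claimed bound, with the multiplicative constants involving $A_{\rm max}$ absorbed into $C_{\rm approx}$ (and using $A_{\rm min}\leq 1$).

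The only subtle point is the bookkeeping of constants so that the $(1+\norm{A^\nu(\omega)}{W^{1,\infty}(D)})$ factor appears exactly as stated; all of the $\omega$-dependence enters only through the regularity estimate for $z$, since the stability bound for $w$ uses the deterministic $A_{\rm min},A_{\rm max}$. No genuine obstacle arises, but care is needed to ensure the $H^2$-bound on $z$ is applied with $f$ replaced by $w\in L^2(D)$, which is where \eqref{eq:reg} is essential.
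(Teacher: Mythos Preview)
Your proof is correct and follows essentially the same Aubin--Nitsche argument as the paper: your dual solution $z$ is exactly the paper's $\S_\infty^\nu\circ\iota(V)$ (where $V$ is what you call $w$), and both proofs proceed via Galerkin orthogonality, the approximation property \eqref{eq:approx}, the regularity bound \eqref{eq:reg} applied to the dual problem, and finally the stability estimate $\norm{w}{H^1(D)}\leq(1+A_{\rm min}^{-1}A_{\rm max})\norm{v}{H^1(D)}$ from Lemma~\ref{lem:Sl}. Your bookkeeping of the $A_{\rm max}$ factor from continuity is in fact slightly more careful than the paper's own write-up.
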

\begin{proof}
Let $\iota\colon L^2(D)\to H^{-1}(D)$ be the usual embedding via the $L^2(D)$-scalar product. 
Define $V:=v-\S_\ell^\nu(\S_\infty^\nu)^{-1}  v$. 
We have with Galerkin orthogonality and by symmetry of $a_\omega^\nu$ for all $w\in \XX_\ell$
 \begin{align*}
 \norm{v-\S_\ell^\nu(\S_\infty^\nu)^{-1} v}{L^2(D)}^2
&=a_\omega^\nu(\S_\infty^\nu\circ \iota(V),V)=a_\omega^\nu(\S_\infty^\nu\circ \iota(V)-w,V)
\\
   &\leq \norm{\S_\infty^\nu\circ \iota(V)-w}{H^1(D)}\norm{V}{H^1(D)}.
   \end{align*}
   Since $w\in\XX_\ell$ was arbitrary, we get with~\eqref{eq:approx} and~\eqref{eq:reg}
   \begin{align*}
    \norm{&v-\S_\ell^\nu(\S_\infty^\nu)^{-1} v}{L^2(D)}^2\\
    &\leq C_{\rm approx}h_\ell \norm{\S_\infty^\nu\circ \iota(V)}{H^2(D)}\norm{V}{H^1(D)}\\
   &\leq C_{\rm approx}\frac{C_{\rm reg}}{A_{\rm min}^2}
	   (1+\norm{A^\nu(\omega)}{W^{1,\infty}(D)})h_\ell\norm{v-\S_\ell^\nu(\S_\infty^\nu)^{-1}  v}{L^2(D)}\norm{V}{H^1(D)}.
 \end{align*}
	With Lemma~\ref{lem:Sl}, %\todo{[Details]}
	we show $\norm{V}{H^1(D)}\leq (1+A_{\rm min}^{-1}A_{\rm min})\norm{v}{H^1(D)}$ 
	and thus we conclude the proof.
\end{proof}

The following result bounds the first term on the right-hand side of the estimate in Lemma~\ref{lem:proderr0} 
by an error estimate in product form.

\begin{lemma}\label{lem:proderr}
There holds for all $\omega\in\Omega$
\begin{align*}
 \norm{(\S_\ell^\nu-\S_{\ell-1}^\nu)(&\S_\ell^\nu)^{-1}(\S_\ell^\nu-\S_{\ell}^{\nu-1})f}{H^1(D)}\\
 &\leq \widetilde C_{\rm prod}(\omega) h_{\ell}\norm{(A^\nu-A^{\nu-1})(\omega)}{W^{1,\infty}(D)}\norm{f}{L^2(D)},
\end{align*}
where $\widetilde C_{\rm prod}(\omega)\simeq C_{\rm unif}A_{\rm min}^{-5}A_{\rm max}^{1/2}
	       (1+\max_{i\in\{0,1\}}\norm{A^{\nu-i}(\omega)}{W^{1,\infty}(D)})^2>0$.
 \end{lemma}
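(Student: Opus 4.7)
The plan is to interpret the operator $(\S_\ell^\nu-\S_{\ell-1}^\nu)(\S_\ell^\nu)^{-1}$ as a Galerkin projection error acting on the finite-element function $e_\ell:=(\S_\ell^\nu-\S_\ell^{\nu-1})f\in\XX_\ell$, then route the required $h_\ell$-rate through a continuous ``twin'' $e_\infty:=(\S_\infty^\nu-\S_\infty^{\nu-1})f\in H^1_0(D)$ that enjoys $H^2$-regularity via the auxiliary variational problem it satisfies.

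First, I would invoke Lemma~\ref{lem:cea} with $v:=e_\ell\in\XX_\ell$, $\mu:=\nu$, $k:=\ell-1$, which reduces the task to
\begin{align*}
\norm{(\S_\ell^\nu-\S_{\ell-1}^\nu)(\S_\ell^\nu)^{-1}e_\ell}{H^1(D)}
\leq A_{\rm min}^{-1/2}A_{\rm max}^{1/2}\inf_{w\in\XX_{\ell-1}}\norm{e_\ell-w}{H^1(D)},
\end{align*}
since $\S_\ell^\nu(\S_\ell^\nu)^{-1}e_\ell=e_\ell$ by Lemma~\ref{lem:galorth}. Because $e_\ell$ is only a finite-element function, \eqref{eq:approx} cannot be applied to it directly; instead I introduce $e_\infty$ and note that subtracting the defining equations of $\S_\infty^\nu f$ and $\S_\infty^{\nu-1}f$ yields $a_\omega^\nu(e_\infty,v)=(a_\omega^{\nu-1}-a_\omega^\nu)(\S_\infty^{\nu-1}f,v)$ for every $v\in H^1_0(D)$. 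Integration by parts, which is legal since $A^\nu,A^{\nu-1}\in W^{1,\infty}(D)$ and $\S_\infty^{\nu-1}f\in H^2(D)$ by \eqref{eq:reg}, rewrites the right-hand side as the $L^2$-functional $-\int_D\ddiv((A^\nu-A^{\nu-1})\nabla\S_\infty^{\nu-1}f)\,v\,dx$, whose $L^2$-norm is bounded by $\norm{A^\nu-A^{\nu-1}}{W^{1,\infty}(D)}\norm{\S_\infty^{\nu-1}f}{H^2(D)}$. Cascading \eqref{eq:reg} twice then produces
\begin{align*}
\norm{e_\infty}{H^2(D)}\lesssim A_{\rm min}^{-4}\bigl(1+\max_{i\in\{0,1\}}\norm{A^{\nu-i}(\omega)}{W^{1,\infty}(D)}\bigr)^{2}\norm{A^\nu-A^{\nu-1}}{W^{1,\infty}(D)}\norm{f}{L^2(D)}.
\end{align*}

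Third, I would pick $w\in\XX_{\ell-1}$ as the best $H^1$-approximation of $e_\infty$, so \eqref{eq:approx} gives $\norm{e_\infty-w}{H^1(D)}\leq C_{\rm approx}h_{\ell-1}\norm{e_\infty}{H^2(D)}$, and use the triangle inequality $\norm{e_\ell-w}{H^1(D)}\leq\norm{e_\ell-e_\infty}{H^1(D)}+\norm{e_\infty-w}{H^1(D)}$. The remaining term $\norm{e_\ell-e_\infty}{H^1(D)}$ is split through $\tilde e_\ell:=\S_\ell^\nu(\S_\infty^\nu)^{-1}e_\infty$, the $\XX_\ell$-Galerkin approximation of $e_\infty$ with respect to $a_\omega^\nu$: applying Lemma~\ref{lem:cea} together with \eqref{eq:approx} bounds $\norm{\tilde e_\ell-e_\infty}{H^1(D)}\lesssim A_{\rm min}^{-1/2}A_{\rm max}^{1/2}h_\ell\norm{e_\infty}{H^2(D)}$, whereas a Strang-type calculation tests $a_\omega^\nu(e_\ell-\tilde e_\ell,\cdot)=(a_\omega^{\nu-1}-a_\omega^\nu)(\S_\ell^{\nu-1}f-\S_\infty^{\nu-1}f,\cdot)$ against $e_\ell-\tilde e_\ell\in\XX_\ell$ and invokes Lemma~\ref{lem:strang} and the standard FEM error $\norm{\S_\ell^{\nu-1}f-\S_\infty^{\nu-1}f}{H^1(D)}\lesssim h_\ell\norm{\S_\infty^{\nu-1}f}{H^2(D)}$ to yield
\begin{align*}
\norm{e_\ell-\tilde e_\ell}{H^1(D)}\lesssim A_{\rm min}^{-7/2}A_{\rm max}^{1/2}h_\ell\bigl(1+\norm{A^{\nu-1}(\omega)}{W^{1,\infty}(D)}\bigr)\norm{A^\nu-A^{\nu-1}}{L^\infty(D)}\norm{f}{L^2(D)}.
\end{align*}
Substituting $h_{\ell-1}\leq C_{\rm unif}^{-1}h_\ell$ and collecting the three $h_\ell$-scaled contributions delivers the asserted estimate.

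The main obstacle is not conceptual but bookkeeping: the powers of $A_{\rm min}$ and $A_{\rm max}$ and the quadratic growth in $(1+\max_i\norm{A^{\nu-i}(\omega)}{W^{1,\infty}(D)})$ arise from cascading \eqref{eq:reg} through $\S_\infty^{\nu-1}f$ and $e_\infty$, Cea's lemma through $\tilde e_\ell$ and $w$, and Strang's perturbation through Lemma~\ref{lem:strang}, and one must verify that all these factors collapse into the single constant $\widetilde C_{\rm prod}(\omega)$ stated in the lemma. A secondary, more technical point is justifying the integration by parts in the $H^2$-regularity step, which relies precisely on the $W^{1,\infty}$-norm of $A^\nu-A^{\nu-1}$ and explains why that norm (rather than merely the $L^\infty$-norm) appears in the final product estimate.
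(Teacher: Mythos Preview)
Your argument is correct and yields the stated product bound, but it takes a genuinely different route from the paper. The paper, after the same C\'ea reduction, chooses the specific competitor $w:=\S_{\ell-1}^\nu(\S_\infty^\nu)^{-1}v$ and works directly in the energy norm: exploiting Galerkin orthogonality it rewrites $\norm{v-w}{\omega,\nu}^2$ as $-(a_\omega^\nu-a_\omega^{\nu-1})(u^{\nu-1},v-w)-(a_\omega^\nu-a_\omega^{\nu-1})(u_\ell^{\nu-1}-u^{\nu-1},v-w)$, integrates by parts in the first term, and extracts the factor $h_\ell$ from the \emph{Aubin--Nitsche $L^2$-estimate} of Lemma~\ref{lem:aubnit} for $\norm{v-w}{L^2(D)}$ rather than from an $H^2$-bound on a continuous twin. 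You instead introduce $e_\infty=(\S_\infty^\nu-\S_\infty^{\nu-1})f$, establish $e_\infty\in H^2(D)$ via an auxiliary problem with $L^2$-right-hand side, and split $e_\ell-e_\infty$ through the Galerkin projection $\tilde e_\ell$ plus a Strang-type identity. Both routes hinge on the same integration-by-parts step that forces the $W^{1,\infty}$-norm; your version is arguably more transparent about the mechanism (the $H^2$-regularity of $e_\infty$), while the paper's duality argument is slightly more economical in that it never needs $\norm{e_\infty}{H^2(D)}$ explicitly and keeps the $A_{\rm max}$-power at $1/2$. As you note, the residual discrepancy is pure constant bookkeeping and falls within the $\simeq$ of the statement.
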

\begin{proof}
First, C\'ea's lemma (Lemma~\ref{lem:cea}) shows for $v\colon \Omega\to\XX_\ell$ 
 \begin{align*}
  \norm{(\S_\ell^\nu-\S_{\ell-1}^\nu(\omega))(\S_\ell^\nu)^{-1}v}{H^1(D)}
  \leq 
  A_{\rm min}^{-1/2}\inf_{w\in \XX_{\ell-1}}\norm{v(\omega)-w}{\omega,\nu}.
 \end{align*}
Let $v:= (\S_\ell^\nu-\S_{\ell}^{\nu-1})f$ 
and choose $w:= \S_{\ell-1}^\nu(\S_\infty^\nu)^{-1} v$. 
Then, there holds with Galerkin orthogonality
$a_\omega^\nu(w,v-w)=a_\omega^\nu(v-\S_{\ell-1}^\nu(\S_\infty^\nu)^{-1} v,w)=0$ 
and hence
\begin{align*}
 \norm{v-w}{\omega,\nu}^2&=a_\omega^\nu(v,v-w)=a_\omega^\nu(u^\nu-\S_{\ell}^{\nu-1}f,v-w)\\
 &=a_\omega^{\nu-1}(u^\nu-\S_{\ell}^{\nu-1}f,v-w)+(a_\omega^\nu-a_\omega^{\nu-1})(u^\nu-\S_{\ell}^{\nu-1}f,v-w)\\
 &=a_\omega^{\nu-1}(u^\nu,v-w)-\dual{f}{v-w}_D+(a_\omega^\nu-a_\omega^{\nu-1})(u^\nu-\S_{\ell}^{\nu-1}f,v-w),
 \end{align*}
where we inserted and subtracted $a_\omega^{\nu-1}(\cdot,\cdot)$. 
This leads to
 \begin{align*}
 \norm{v-w}{\omega,\nu}^2 
 &=a_\omega^{\nu-1}(u^\nu,v-w)-a_\omega^{\nu}(u^\nu,v-w) + (a_\omega^\nu-a_\omega^{\nu-1})(u^\nu-\S_{\ell}^{\nu-1}f,v-w)
 \\
 &=-(a_\omega^\nu-a_\omega^{\nu-1})(\S_{\ell}^{\nu-1}f,v-w)
 \\
 &=-(a_\omega^\nu-a_\omega^{\nu-1})(u^{\nu-1},v-w)-(a_\omega^\nu-a_\omega^{\nu-1})(u^{\nu-1}_\ell-u^{\nu-1},v-w),
 \end{align*}
 where we used $\S_\ell^{\nu-1} f = u_\ell^{\nu-1}$ and 
 we added and subtracted the corresponding exact solution $u^{\nu-1}$.
 Using the definition of the bilinear forms as well as integration by parts, 
 the above reads 
 \begin{align*}
 \norm{v-w}{\omega,\nu}^2&=\int_D\big(\nabla(A^\nu-A^{\nu-1})\cdot\nabla u^{\nu-1} 
                          + (A^\nu-A^{\nu-1})\Delta u^{\nu-1}\big)(v-w)\,dx\\
 &\qquad -(a_\omega^\nu-a_\omega^{\nu-1})(u^{\nu-1}_\ell-u^{\nu-1},v-w)\\
 &\leq \norm{A^\nu-A^{\nu-1}}{W^{1,\infty}(D)}\norm{u^{\nu-1}}{H^2(D)}\norm{v-w}{L^2(D)} \\
 &\qquad + \norm{a_\omega^\nu-a_\omega^{\nu-1}}{}\norm{u^{\nu-1}_\ell-u^{\nu-1}}{H^1(D)}\norm{v-w}{H^1(D)}.
\end{align*}
Finally, Lemma~\ref{lem:aubnit} shows 
\begin{align*}
\norm{v-w}{L^2(D)}
  &\lesssim h_{\ell-1} A_{\rm min}^{-2}(1+\norm{A^{\nu-1}(\omega)}{W^{1,\infty}(D)})\norm{v}{H^1(D)}\\
  &\lesssim h_{\ell-1} A_{\rm min}^{-3}(1+\norm{A^{\nu-1}(\omega)}{W^{1,\infty}(D)})\norm{f}{L^2(D)},
\end{align*}
where the last estimate uses Lemma~\ref{lem:Sl}.
Assumption~\eqref{eq:approx}, together with the C\'ea lemma (Lemma~\ref{lem:cea}), implies
\begin{align*}
\norm{u^{\nu-1}_\ell-u^{\nu-1}}{H^1(D)}\lesssim 
A_{\rm min}^{-1/2}A_{\rm max}^{1/2}h_\ell\norm{u^{\nu-1}}{H^2(D)}.
\end{align*}
Assumption~\eqref{eq:reg} implies 
\begin{align*}
 \norm{u^{\nu-1}}{H^2(D)}\lesssim  A_{\rm min}^{-2}(1+\norm{A^{\nu-1}(\omega)}{W^{1,\infty}(D)}\norm{f}{L^2(D)}
\end{align*}
and thus concludes the proof.

\end{proof}

Finally, we have collected all the ingredients to obtain the combined discretization error estimate
in product form.

\begin{proposition}\label{prop:proderr}
 There holds for all $\omega\in\Omega$
 \begin{align*}
  \norm{D_\ell^\nu(\omega)}{H^1(D)}
  &\leq C_{\rm prod}(\omega) h_\ell\norm{(A^\nu-A^{\nu-1})(\omega)}{W^{1,\infty}(D)}\norm{f}{L^2(D)},
  \end{align*}
  where $C_{\rm prod}(\omega)\simeq \widetilde C_{\rm prod}(\omega)>0$ and $\widetilde C_{\rm prod}$ is defined in Lemma~\ref{lem:proderr}.
\end{proposition}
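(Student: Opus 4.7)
The proof plan is to combine the splitting provided by Lemma~\ref{lem:proderr0} with the product estimate of Lemma~\ref{lem:proderr}, so that only the second (``remainder'') term on the right-hand side of~\eqref{eq:proderr0} requires further work. Lemma~\ref{lem:proderr} already delivers exactly the required bound $\widetilde C_{\rm prod}(\omega)\, h_\ell\, \|(A^\nu-A^{\nu-1})(\omega)\|_{W^{1,\infty}(D)}\|f\|_{L^2(D)}$ for the first term, so it suffices to show that the remainder admits the same asymptotic form (up to the constant $C_{\rm prod}(\omega)\simeq \widetilde C_{\rm prod}(\omega)$).

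First I would address the best-approximation factor $\inf_{v\in \XX_{\ell-1}}\|u_\ell^{\nu-1}(\omega)-v\|_{H^1(D)}$ appearing in the remainder term. A triangle inequality against the exact solution $u^{\nu-1}(\omega)=\S_\infty^{\nu-1}f$ together with the approximation property~\eqref{eq:approx} and the C\'ea estimate of Lemma~\ref{lem:cea} yields
\begin{align*}
  \inf_{v\in \XX_{\ell-1}}\|u_\ell^{\nu-1}(\omega)-v\|_{H^1(D)}
  &\leq \|u_\ell^{\nu-1}(\omega)-u^{\nu-1}(\omega)\|_{H^1(D)} + \inf_{v\in\XX_{\ell-1}}\|u^{\nu-1}(\omega)-v\|_{H^1(D)}\\
  &\lesssim A_{\rm min}^{-1/2}A_{\rm max}^{1/2}\, h_{\ell-1}\,\|u^{\nu-1}(\omega)\|_{H^2(D)}.
\end{align*}
Invoking the regularity estimate~\eqref{eq:reg} bounds $\|u^{\nu-1}(\omega)\|_{H^2(D)}$ by $A_{\rm min}^{-2}(1+\|A^{\nu-1}(\omega)\|_{W^{1,\infty}(D)})\|f\|_{L^2(D)}$, and the quasi-uniformity assumption $h_{\ell-1}\leq C_{\rm unif}^{-1}h_\ell$ allows the replacement $h_{\ell-1}\lesssim h_\ell$.

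Next I would multiply by the prefactor $C_{\rm proj}(\omega)\|(A^\nu-A^{\nu-1})(\omega)\|_{L^\infty(D)}$ from Lemma~\ref{lem:proderr0}. Since $\|\cdot\|_{L^\infty(D)}\leq \|\cdot\|_{W^{1,\infty}(D)}$, the resulting bound on the remainder has exactly the claimed structural form $h_\ell\|(A^\nu-A^{\nu-1})(\omega)\|_{W^{1,\infty}(D)}\|f\|_{L^2(D)}$, with a constant of order $C_{\rm unif}^{-1}A_{\rm min}^{-7/2}A_{\rm max}^{1/2}(1+\|A^{\nu-1}(\omega)\|_{W^{1,\infty}(D)})$, which is (asymptotically in the constants) dominated by $\widetilde C_{\rm prod}(\omega)$. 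Summing the two contributions furnishes the assertion with $C_{\rm prod}(\omega)\simeq \widetilde C_{\rm prod}(\omega)$.

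There is no real obstacle; the entire argument is an assembly of ingredients already proven. The only bookkeeping care concerns the constants: one has to verify that the new constant arising from the remainder term indeed fits inside the hidden factor $(1+\max_{i\in\{0,1\}}\|A^{\nu-i}(\omega)\|_{W^{1,\infty}(D)})^2$ already present in $\widetilde C_{\rm prod}(\omega)$, and that the use of $h_{\ell-1}\lesssim h_\ell$ is consistent with the $C_{\rm unif}$ dependence stated in Lemma~\ref{lem:proderr}. Both are immediate from the definitions.
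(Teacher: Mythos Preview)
Your proposal is correct and follows essentially the same route as the paper's own proof: invoke Lemma~\ref{lem:proderr} for the first term of~\eqref{eq:proderr0}, then bound the remainder via~\eqref{eq:approx} and~\eqref{eq:reg} together with $h_{\ell-1}\le C_{\rm unif}^{-1}h_\ell$. Your triangle-inequality detour through $u^{\nu-1}$ to access the $H^2$-regularity (since $u_\ell^{\nu-1}$ itself is only piecewise polynomial) makes explicit a step the paper leaves implicit, but the argument is otherwise identical.
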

\begin{proof}
 The first term on the right-hand side of~\eqref{eq:proderr0} is bounded by Lemma~\ref{lem:proderr}. 
 For the second term, we use~\eqref{eq:approx} together with~\eqref{eq:reg} to obtain a similar bound.
 Finally, we exploit that $h_\ell\geq C_{\rm unif} h_{\ell-1}$ and conclude the proof.
\end{proof}

Since we are interested in the error of the goal functional $G(\cdot)$, 
we may exploit a standard Aubin-Nitsche duality argument to double the rate of convergence.

\begin{theorem}\label{thm:proderr}
 There holds for all $\omega\in\Omega$
 \begin{align*}
  |G(D_\ell^\nu(\omega))|
  &\leq \overline{C}_{\rm prod}(\omega) h_\ell^2\min\big\{1,\norm{(A^\nu-A^{\nu-1})(\omega)}{W^{1,\infty}(D)}\big\}\norm{f}{L^2(D)}
  \norm{g}{L^2(D)}
  \end{align*}
 with $\overline{C}_{\rm prod}(\omega)>0$ depending on $C_{\rm prod}(\omega)$ from Proposition~\ref{prop:proderr} via
	\\
 	$\overline{C}_{\rm prod}(\omega)
 	\simeq 
	A_{\rm min}^{-5}A_{\rm max}\norm{A^{\nu}(\omega)}{W^{1,\infty}(D)}\norm{A^{\nu-1}(\omega)}{W^{1,\infty}(D)}C_{\rm prod}(\omega)$.
\end{theorem}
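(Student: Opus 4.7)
The plan is a duality (Aubin--Nitsche) argument that gains an extra factor of $h_\ell$ over Proposition~\ref{prop:proderr}. Introduce the dual solution $z^\nu\in H^1_0(D)$ defined by $a_\omega^\nu(v,z^\nu)=G(v)=\int_D gv\,dx$ for all $v\in H^1_0(D)$. By symmetry of the bilinear form, $z^\nu=\S_\infty^\nu g$, so the regularity assumption~\eqref{eq:reg} gives $\norm{z^\nu}{H^2(D)}\lesssim A_{\rm min}^{-2}(1+\norm{A^\nu(\omega)}{W^{1,\infty}(D)})\norm{g}{L^2(D)}$, and $G(D_\ell^\nu)=a_\omega^\nu(D_\ell^\nu,z^\nu)$.

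The key identity is that two applications of Galerkin orthogonality (Lemma~\ref{lem:galorth}), one for $a_\omega^\nu$ and one for $a_\omega^{\nu-1}$, yield for every $v\in\XX_{\ell-1}$ the relation
\begin{equation*}
a_\omega^\nu(D_\ell^\nu,v) = -(a_\omega^\nu-a_\omega^{\nu-1})(u_\ell^{\nu-1}-u_{\ell-1}^{\nu-1},v).
\end{equation*}
Let $z^\nu_{\ell-1}\in\XX_{\ell-1}$ be a best approximation of $z^\nu$ satisfying~\eqref{eq:approx}; inserting $z^\nu=(z^\nu-z^\nu_{\ell-1})+z^\nu_{\ell-1}$ into $a_\omega^\nu(D_\ell^\nu,z^\nu)$ and then splitting $z^\nu_{\ell-1}=z^\nu-(z^\nu-z^\nu_{\ell-1})$ produces the three-term decomposition
\begin{align*}
G(D_\ell^\nu) &= a_\omega^\nu(D_\ell^\nu,z^\nu-z^\nu_{\ell-1}) - (a_\omega^\nu-a_\omega^{\nu-1})(u_\ell^{\nu-1}-u_{\ell-1}^{\nu-1},z^\nu) \\
&\quad + (a_\omega^\nu-a_\omega^{\nu-1})(u_\ell^{\nu-1}-u_{\ell-1}^{\nu-1},z^\nu-z^\nu_{\ell-1}).
\end{align*}
The first term is bounded by $A_{\rm max}\norm{D_\ell^\nu}{H^1(D)}C_{\rm approx}h_{\ell-1}\norm{z^\nu}{H^2(D)}$, which is $O(h_\ell^2)\norm{(A^\nu-A^{\nu-1})(\omega)}{W^{1,\infty}(D)}\norm{f}{L^2(D)}\norm{g}{L^2(D)}$ by Proposition~\ref{prop:proderr} and $h_{\ell-1}\leq h_\ell/C_{\rm unif}$. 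The third term receives the same bound from Lemma~\ref{lem:strang}, C\'ea's lemma applied to $u_\ell^{\nu-1}-u_{\ell-1}^{\nu-1}$, and~\eqref{eq:approx}--\eqref{eq:reg}. The middle term is handled by integration by parts, admissible because $z^\nu\in H^2(D)\cap H^1_0(D)$ and $A^\nu-A^{\nu-1}\in W^{1,\infty}(D)$: moving the derivative from $u_\ell^{\nu-1}-u_{\ell-1}^{\nu-1}$ onto $(A^\nu-A^{\nu-1})\nabla z^\nu$ (with vanishing boundary term since $u_\ell^{\nu-1}-u_{\ell-1}^{\nu-1}\in H^1_0(D)$) yields the bound
\begin{equation*}
\norm{u_\ell^{\nu-1}-u_{\ell-1}^{\nu-1}}{L^2(D)}\,\norm{(A^\nu-A^{\nu-1})(\omega)}{W^{1,\infty}(D)}\,\norm{z^\nu}{H^2(D)},
\end{equation*}
and the $L^2$-norm of the discrete error difference satisfies $\norm{u_\ell^{\nu-1}-u_{\ell-1}^{\nu-1}}{L^2(D)}\lesssim h_\ell^2\norm{f}{L^2(D)}$ by the triangle inequality together with the standard $L^2$-Aubin--Nitsche rate (a sharpening of Lemma~\ref{lem:aubnit} exploiting $u^{\nu-1}\in H^2(D)$ via~\eqref{eq:reg}).

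For the $\min\{1,\cdot\}$ alternative, Cauchy--Schwarz gives $|G(D_\ell^\nu)|\leq\norm{g}{L^2(D)}\norm{D_\ell^\nu}{L^2(D)}$, and the triangle inequality together with the $O(h_\ell^2)$ $L^2$-Aubin--Nitsche bound on each of the four Galerkin approximations in $D_\ell^\nu$ yields the bound without the coefficient-difference factor. Taking the minimum of the two bounds and carefully tracking constants through these steps produces the stated $\overline{C}_{\rm prod}(\omega)$. The main obstacle is the integration-by-parts step for the middle term: shifting one derivative off $u_\ell^{\nu-1}-u_{\ell-1}^{\nu-1}$ (so that its $L^2$-norm enters and supplies the extra $h_\ell$) forces the $W^{1,\infty}$-regularity of $A^\nu-A^{\nu-1}$ and the $H^2$-regularity of $z^\nu$, which together account for the multiplicative factors $\norm{A^\nu(\omega)}{W^{1,\infty}(D)}\norm{A^{\nu-1}(\omega)}{W^{1,\infty}(D)}$ entering $\overline{C}_{\rm prod}(\omega)$.
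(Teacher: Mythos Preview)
Your argument is correct and follows a genuinely different route from the paper. The paper introduces \emph{two} dual solutions $g^\nu$ and $g^{\nu-1}$ (Riesz representers of $G$ with respect to $a_\omega^\nu$ and $a_\omega^{\nu-1}$) and, after inserting Galerkin orthogonality, arrives at a decomposition whose ``cross'' term is $a_\omega^\nu(u_\ell^\nu-u_{\ell-1}^\nu,\,g^\nu-g^{\nu-1}-w)$. To bound this, the paper must establish $H^2$-regularity of the \emph{dual difference} $g^\nu-g^{\nu-1}$, which it does via integration by parts to identify an $L^2$ right-hand side carrying the factor $\norm{A^\nu-A^{\nu-1}}{W^{1,\infty}(D)}$. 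You instead work with a \emph{single} dual solution $z^\nu$, and your ``cross'' term $(a_\omega^\nu-a_\omega^{\nu-1})(u_\ell^{\nu-1}-u_{\ell-1}^{\nu-1},z^\nu)$ is handled by integrating by parts on the \emph{primal} side, trading the $H^1$-norm of $u_\ell^{\nu-1}-u_{\ell-1}^{\nu-1}$ for its $L^2$-norm and invoking the $O(h_\ell^2)$ Aubin--Nitsche rate there. Your route is slightly more economical (no second dual, no $H^2$-estimate for $g^\nu-g^{\nu-1}$), while the paper's route makes the dual-side structure more explicit, which may be useful if one later wants to combine with dual-weighted residual or adaptive techniques. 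For the $\min\{1,\cdot\}$ alternative the paper bounds $G(u_\ell^\nu-u_{\ell-1}^\nu)$ directly by duality; your $L^2$-triangle-inequality argument gives the same estimate. One caveat: the precise constant your three terms produce does not match the paper's stated $\overline{C}_{\rm prod}(\omega)$ exactly (your middle term does not carry $C_{\rm prod}(\omega)$, for instance), but since the theorem only asserts ``$\simeq$'' the polynomial dependence on $A_{\rm min}^{-1}$, $A_{\rm max}$, and $\norm{A^{\nu-i}}{W^{1,\infty}(D)}$ that you obtain is equivalent.
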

\begin{proof}
Let $g^\nu\in H^1_0(\Omega)$ such that $G(\cdot)=a_\omega^\nu(\cdot,g^\nu)$ 
 (note that such a function always exists due to the ellipticity~\eqref{eq:ell} of $a_\omega^{\nu-1}$).
There holds for $v,w\in \XX_{\ell-1}$
\begin{align*}
 G(D_\ell^\nu)&=a_\omega^\nu(u_\ell^\nu-u_{\ell-1}^\nu, g^\nu) -a_\omega^{\nu-1}(u_\ell^{\nu-1}-u_{\ell-1}^{\nu-1}, g^{\nu-1})\\
&= a_\omega^\nu(u_\ell^\nu-u_{\ell-1}^\nu, g^\nu-v) -a_\omega^{\nu-1}(u_\ell^{\nu-1}-u_{\ell-1}^{\nu-1}, g^{\nu-1}-v),
\end{align*}
where we used Galerkin orthogonality (Lemma~\ref{lem:galorth}) to insert $v\in\XX_{\ell-1}$. 
Adding and subtracting of $a_\omega^\nu(\cdot,\cdot)$ leads to
\begin{align*}
G(D_\ell^\nu)&= a_\omega^\nu(u_\ell^\nu-u_{\ell-1}^\nu, g^\nu-v) -a_\omega^{\nu}(u_\ell^{\nu-1}-u_{\ell-1}^{\nu-1}, g^{\nu-1}-v)\\
&\qquad + (a_\omega^{\nu}-a_\omega^{\nu-1})(u_\ell^{\nu-1}-u_{\ell-1}^{\nu-1}, g^{\nu-1}-v)\\
&= a_\omega^\nu(u_\ell^\nu-u_{\ell-1}^\nu, g^{\nu-1}-v) -a_\omega^{\nu}(u_\ell^{\nu-1}-u_{\ell-1}^{\nu-1}, g^{\nu-1}-v)\\
&\qquad + (a_\omega^{\nu}-a_\omega^{\nu-1})(u_\ell^{\nu-1}-u_{\ell-1}^{\nu-1}, g^{\nu-1}-v) +  a_\omega^\nu(u_\ell^\nu-u_{\ell-1}^\nu, g^\nu-g^{\nu-1}-w),
\end{align*}
where we added and subtracted $a_\omega^\nu(u_\ell^\nu-u_{\ell-1}^\nu, g^{\nu-1})$ and inserted $w\in\XX_{\ell-1}$ 
using Galerkin orthogonality (Lemma~\ref{lem:galorth}).
Recalling the definition of $D_\ell^\nu$ in~\eqref{eq:dd}, we arrive at 
\begin{align*}
G(D_\ell^\nu)
&= a_\omega^\nu(D_\ell^\nu, g^{\nu-1}-v) 
 + (a_\omega^{\nu}-a_\omega^{\nu-1})(u_\ell^{\nu-1}-u_{\ell-1}^{\nu-1}, g^{\nu-1}-v)\\
&\qquad+  a_\omega^\nu(u_\ell^\nu-u_{\ell-1}^\nu, g^\nu-g^{\nu-1}-w).
 \end{align*}
Lemma~\ref{lem:strang} and the C\'ea lemma (Lemma~\ref{lem:cea}) 
together with~\eqref{eq:approx} and~\eqref{eq:reg} allows us to estimate
 \begin{align}\label{eq:thm1}
 \begin{split}
  |G(D_\ell^\nu)|&\lesssim A_{\rm max} \norm{D_\ell^\nu}{H^1(D)}\norm{g^{\nu-1}-v}{H^1(D)}\\
  &\qquad+ \norm{A^\nu-A^{\nu-1}}{L^\infty(D)}\norm{u_\ell^{\nu-1}-u_{\ell-1}^{\nu-1}}{H^1(D)}\norm{g^{\nu-1}-v}{H^1(D)}\\
  &\qquad +\norm{u_\ell^{\nu}-u_{\ell-1}^{\nu}}{H^1(D)}\norm{g^\nu-g^{\nu-1}-w}{H^1(D)}\\
  &\lesssim  A_{\rm max} \norm{D_\ell^\nu}{H^1(D)}\norm{g^{\nu-1}-v}{H^1(D)} \\
  &\qquad + A_{\rm min}^{-5/2}A_{\rm max}^{1/2}(1+\norm{A^{\nu-1}(\omega)}{W^{1,\infty}(D)})\norm{f}{L^2(D)}h_\ell\\
  &\qquad \Big(\norm{A^\nu-A^{\nu-1}}{L^\infty(D)}\norm{g^{\nu-1}-v}{H^1(D)} +\norm{g^\nu-g^{\nu-1}-w}{H^1(D)}\Big).
 \end{split}
 \end{align}
Since $G(\cdot)=\int_D g(x)(\cdot)\,dx$ for some $g\in L^2(D)$, we obtain from~\eqref{eq:reg} that $g^\nu,g^{\nu-1}\in H^2(D)$. 
Therefore, and since $v\in \XX_{\ell-1}$ was arbitrary,~\eqref{eq:approx} and~\eqref{eq:reg} show
\begin{align*}
\inf_{v\in \XX_{\ell-1}} \norm{g^{\nu-1}-v}{H^1(D)}
 \lesssim A_{\rm min}^{-2}(1+\norm{A^{\nu-1}(\omega)}{W^{1,\infty}(D)})h_\ell \norm{g}{L^2(D)}.
\end{align*}
Moreover, there holds for all $v\in H^1_0(D)$
\begin{align*}
 a^\nu_\omega(g^\nu-g^{\nu-1},v)&= \dual{g}{v}_D - a_\omega^\nu(g^{\nu-1},v) = (a^{\nu-1}-a^\nu)(g^{\nu-1},v)\\
 &=\int_D\big(\nabla(A^\nu-A^{\nu-1})\cdot\nabla g^{\nu-1} + (A^\nu-A^{\nu-1})\Delta g^{\nu-1}\big)v\,dx.
\end{align*}
It is easy to see that the right-hand side is of the form $\dual{r}{v}_D$ for some $r\in L^2(D)$ with
\[
 \norm{r}{L^2(D)} \leq 2\norm{A^\nu-A^{\nu-1}}{W^{1,\infty}(D)}\norm{g^{\nu-1}}{H^2(D)}
 \lesssim \norm{A^\nu-A^{\nu-1}}{W^{1,\infty}(D)}\norm{g}{L^2(D)}.
\]
Therefore,~\eqref{eq:reg} shows 
\begin{align*}
 \norm{g^\nu-g^{\nu-1}}{H^2(D)}
 \lesssim A_{\rm min}^{-2}(1+\norm{A^{\nu}(\omega)}{W^{1,\infty}(D)})
          \norm{A^\nu-A^{\nu-1}}{W^{1,\infty}(D)}\norm{g}{L^2(D)}.
\end{align*}
Since $w\in \XX_{\ell-1}$ in~\eqref{eq:thm1} was arbitrary, 
the same argument and~\eqref{eq:approx} show
\begin{align*}
\inf_{w\in \XX_{\ell-1}}  &\norm{g^\nu-g^{\nu-1}-w}{H^1(D)}\\
&\lesssim h_\ell A_{\rm min}^{-2}(1+\norm{A^{\nu}(\omega)}{W^{1,\infty}(D)})
               \norm{A^\nu-A^{\nu-1}}{W^{1,\infty}(D)}\norm{g}{L^2(D)}.
\end{align*}
Altogether, we conclude the proof by use of Proposition~\ref{prop:proderr}, the above estimates, and insertion in~\eqref{eq:thm1}.
The minimum in the statement follows from standard arguments which we will sketch briefly. 
There holds for all $v\in\XX_{\ell-1}$ 
\begin{align*}
	G(u_\ell^\nu-u_{\ell-1}^\nu)&=a_\omega^\nu(u_\ell^\nu-u_{\ell-1}^\nu, g^\nu)=a_\omega^\nu(u_\ell^\nu-u_{\ell-1}^\nu, g^\nu-v).
\end{align*}
As above, choosing $v=\S_\ell^\nu(\S_\infty^\nu)^{-1}g^\nu$ and Lemma~\ref{lem:cea} together with~\eqref{eq:approx} leads to
\begin{align*}
	|	G(u_\ell^\nu-u_{\ell-1}^\nu)|&\lesssim \norm{u_\ell^\nu-u_{\ell-1}^\nu}{H^1(D)}h_{\ell-1}\norm{g}{L^2(D)}\\
	&\lesssim h_{\ell-1}^2\norm{f}{L^2(D)}\norm{g}{L^2(D)}.
\end{align*}
This concludes the proof.
\end{proof}

\section{Approximation of the random coefficient}
\label{S:ApproxRndCoef}
This section gives two examples of how to choose the random coefficient 
$A(x,\omega)$ as well as the approximations $A^\nu(x,\omega)$ in terms of the KL-expansion.

%%%%%%%%%%%%%%%%%%%%%%%%%%%%%%%%%%%%%%%%%%%%%%%%%%%%%%%%%%%%%%%%%%
\subsection{KL expansion}\label{sec:KL0}
In this section, we assume $\Omega=[0,1]^\N$, and define $\omega=(\omega_i)_{i\in\N}$.
We assume that $A^\nu$ is of the form (recall that $(s_\nu)_{\nu\in\N}$ is strictly increasing)
\begin{align}\label{eq:KL0}
 A^\nu(x,\omega):= \phi_0(x) + \sum_{j=1}^{s_\nu}\psi_j(\omega_j)\phi_j(x)
\end{align}
for functions $\phi_j\in W^{1,\infty}(D)$ and $\psi_j\in L^\infty([0,1],[-C_\psi,C_\psi])$ for some fixed $C_\psi>0$. While the literature often deals with the uniform case $\psi_j(\omega):=\omega-1/2$ (see next subsection), we allow this slightly more general case.
We assume that the series converges absolutely in $W^{1,\infty}(D)$ for all $\omega\in \Omega$ and hence define
\begin{align*}
 A(x,\omega):= A^\infty(x,\omega) := \phi_0(x) + \sum_{j=1}^{\infty}\psi_j(\omega_j)\phi_j(x).
\end{align*}
Moreover, we assume that~\eqref{eq:minmax} holds.
\begin{theorem}\label{thm:proderrKL0}
Under the assumptions of the current section, 
there holds
	\begin{align}\label{eq:KLest0}
	\norm{G(D_\ell^\nu)}{L^\infty(\Omega)}
	&\leq C_{\rm KL} h_\ell^2\sum_{i=s_{\nu-1}+1}^{s_\nu}\norm{\phi_i}{W^{1,\infty}(D)}\norm{f}{L^2(D)}\norm{g}{L^2(D)}.
	\end{align}
The constant $C_{\rm KL}>0$ depends on $C_\psi$ but is independent of $\ell$, $\nu$, and $\omega$.
\end{theorem}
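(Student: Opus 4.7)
The strategy is to specialize the deterministic estimate of Theorem~\ref{thm:proderr} to the KL expansion~\eqref{eq:KL0} and then to pass to the supremum over $\omega\in\Omega$. The only quantities in Theorem~\ref{thm:proderr} that depend on $\omega$ and on $\nu$ are $\overline{C}_{\rm prod}(\omega)$ and $\norm{(A^\nu-A^{\nu-1})(\omega)}{W^{1,\infty}(D)}$, so it suffices to control each of these two factors uniformly in $\omega$.

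First I would exploit the finite telescoping structure of the KL truncations. By~\eqref{eq:KL0},
\begin{align*}
(A^\nu-A^{\nu-1})(x,\omega) = \sum_{j=s_{\nu-1}+1}^{s_\nu}\psi_j(\omega_j)\,\phi_j(x),
\end{align*}
so the triangle inequality in $W^{1,\infty}(D)$ combined with the uniform bound $|\psi_j(\omega_j)|\leq C_\psi$ yields, for every $\omega\in\Omega$,
\begin{align*}
\norm{(A^\nu-A^{\nu-1})(\omega)}{W^{1,\infty}(D)}
\leq C_\psi \sum_{j=s_{\nu-1}+1}^{s_\nu}\norm{\phi_j}{W^{1,\infty}(D)}.
\end{align*}
This is the sum that appears on the right-hand side of~\eqref{eq:KLest0}.

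Second, I would establish a uniform-in-$\omega$ (and in $\nu$) bound on $\overline{C}_{\rm prod}(\omega)$. Recall from Theorem~\ref{thm:proderr} that this constant is polynomial in $\norm{A^\nu(\omega)}{W^{1,\infty}(D)}$ and $\norm{A^{\nu-1}(\omega)}{W^{1,\infty}(D)}$, with the remaining factors ($A_{\rm min}$, $A_{\rm max}$, $C_{\rm unif}$, $C_{\rm reg}$, $C_{\rm approx}$) being deterministic constants. Applying $|\psi_j(\omega_j)|\leq C_\psi$ to~\eqref{eq:KL0} gives
\begin{align*}
\norm{A^\nu(\omega)}{W^{1,\infty}(D)}
\leq \norm{\phi_0}{W^{1,\infty}(D)} + C_\psi\sum_{j=1}^{\infty}\norm{\phi_j}{W^{1,\infty}(D)},
\end{align*}
where the last sum is finite because the series defining $A^\infty(x,\omega)$ converges absolutely in $W^{1,\infty}(D)$ (applied, e.g., at any $\omega$ for which $\psi_j(\omega_j)$ does not vanish). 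Hence $\overline{C}_{\rm prod}(\omega)$ is bounded by a constant $C'>0$ independent of $\nu$ and $\omega$.

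Combining these two bounds with Theorem~\ref{thm:proderr} gives, for every $\omega\in\Omega$,
\begin{align*}
|G(D_\ell^\nu(\omega))|
\leq C'\,C_\psi\, h_\ell^2\,\Bigl(\sum_{j=s_{\nu-1}+1}^{s_\nu}\norm{\phi_j}{W^{1,\infty}(D)}\Bigr)\norm{f}{L^2(D)}\norm{g}{L^2(D)},
\end{align*}
and taking the essential supremum over $\omega$ completes the proof with $C_{\rm KL}:=C'\,C_\psi$. The main (and only nontrivial) point is the uniform-in-$\omega$ bound on $\overline{C}_{\rm prod}(\omega)$; everything else is bookkeeping for the telescoping sum $A^\nu-A^{\nu-1}$.
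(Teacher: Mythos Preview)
Your proposal is correct and follows exactly the paper's approach: the paper's proof is the single sentence ``follows immediately by definition of $A^\nu$ and Theorem~\ref{thm:proderr}'', and you have simply spelled out the two ingredients (the telescoping bound on $A^\nu-A^{\nu-1}$ and the uniform control of $\overline{C}_{\rm prod}(\omega)$). One minor point: your parenthetical justification for $\sum_j\norm{\phi_j}{W^{1,\infty}(D)}<\infty$ is not quite rigorous (nonvanishing of $\psi_j(\omega_j)$ gives no uniform lower bound), but the needed uniform-in-$(\omega,\nu)$ bound on $\norm{A^\nu(\omega)}{W^{1,\infty}(D)}$ follows directly from~\eqref{eq:Aconv}, which already places $A$ and the $A^\nu$ uniformly in $L^\infty(\Omega;W^{1,\infty}(D))$.
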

\begin{proof}
The estimate follows immediately by definition of $A^\nu$ and Theorem~\ref{thm:proderr}. 
\end{proof}
\subsection{KL expansion with uniform random variables}
\label{sec:KL1}
In many cases, it is possible to reduce~\eqref{eq:KL0} to the simplified form
\begin{align}\label{eq:KL}
  A^\nu(x,\omega):=\phi_0(x)+\sum_{j=1}^{s_\nu}\omega_j\phi_j(x),
\end{align}
where now $\Omega=[-1/2,1/2]^\N$ and 
$
{\rm ess}\inf_{x\in D} \phi_0(x) > 0 \;.
$
This means setting $\psi_j(\omega) := \omega-1/2$ in~\eqref{eq:KL0}.
\begin{remark}
Note that theoretically, the case from Section~\ref{sec:KL0} 
can always be reduced to the present case. 
However, in many cases, this requires the user to pre-compute all functions 
$\phi_j$ which is computationally impractical.
\end{remark}

It turns out that in this case, an improved version of Theorem~\ref{thm:proderr} 
(see Theorem~\ref{thm:proderrKL} at the end of this section) 
can be derived by arguments already used for quasi-Monte Carlo estimates
(see, e.g., the works~\cite{ml1,ml2} and the references therein).
Given a subset $\Omega^\prime\subseteq \prod_{j\in\N}\C$, 
we define for all $j\in\N$
\begin{align*}
\Omega^\prime_j
:=\set{\omega_j\in\C}{\exists \omega_i\in\C,\,i\in\N\setminus\{j\}
\text{ such that }\omega=(\omega_1,\omega_2,\ldots)\in\Omega^\prime}.
\end{align*}

\begin{lemma}\label{lem:analytic}
Assume that $\Omega^\prime\supseteq \Omega $ is such that all results of Section~\ref{sec:proderr} 
hold true with $\Omega^\prime$ instead of $\Omega$. This is particularly the case if the random coefficient remains uniformly bounded away from zero and infinity also in $\Omega^\prime$.
Then the map $F\colon \Omega^\prime_j \to \C$, $\omega_j \mapsto G(\S_\ell^\nu(\omega) f)$ 
is holomorphic for all $j\in\N$.
\end{lemma}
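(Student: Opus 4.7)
The plan is to reduce the claim to the fact that a finite-dimensional Galerkin system with an affine-parametric stiffness matrix has a rational, hence holomorphic, inverse wherever the matrix stays invertible. Fix any $j\in\N$ and any values $\omega_i\in\Omega^\prime_i$ for $i\ne j$, and regard $\omega_j$ as the complex variable. If $j>s_\nu$, then $F$ does not depend on $\omega_j$ at all and the claim is trivial; otherwise
\begin{align*}
A^\nu(x,\omega)=\phi_0(x)+\sum_{\substack{i=1\\ i\ne j}}^{s_\nu}\omega_i\phi_i(x)+\omega_j\phi_j(x)
\end{align*}
is affine in $\omega_j$ as a map into $L^\infty(D)$, and so the bilinear form $a_\omega^\nu(\cdot,\cdot)$ on $\XX_\ell\times\XX_\ell$ depends affinely, and hence entirely, on $\omega_j$.

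Next, I would pick a basis $\{\varphi_1,\ldots,\varphi_{N_\ell}\}$ of the finite-dimensional space $\XX_\ell$ and rewrite the Galerkin problem as a linear system $K(\omega_j)\,U(\omega_j)=b$, where $K_{pq}(\omega_j)=a_\omega^\nu(\varphi_q,\varphi_p)$ depends affinely on $\omega_j$ and the load vector $b_p=\dual{f}{\varphi_p}_D$ is independent of $\omega_j$. By the standing assumption that the results of Section~\ref{sec:proderr} extend to $\Omega^\prime$ --- in particular the ellipticity~\eqref{eq:ell} --- the matrix $K(\omega_j)$ is invertible for every $\omega_j\in\Omega^\prime_j$.

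By Cramer's rule, each entry of $K(\omega_j)^{-1}$ is a rational function of $\omega_j$ whose numerator and denominator are polynomials in $\omega_j$ (the denominator being $\det K(\omega_j)$), and the denominator is non-vanishing on $\Omega^\prime_j$. Therefore $U(\omega_j)=K(\omega_j)^{-1}b$ is holomorphic in an open complex neighborhood of $\Omega^\prime_j$, and so is
\begin{align*}
F(\omega_j)=G\bigl(\S_\ell^\nu(\omega)f\bigr)=\sum_{p=1}^{N_\ell}U_p(\omega_j)\,G(\varphi_p),
\end{align*}
since $G$ is a bounded linear functional on $\XX_\ell$.

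The only subtle point is the correct reading of \emph{``all results of Section~\ref{sec:proderr} hold with $\Omega^\prime$ instead of $\Omega$''}: I would interpret this precisely as the statement that $A^\nu(\cdot,\omega)$ stays uniformly bounded away from zero (and above) for $\omega\in\Omega^\prime$, so that the stiffness matrix stays uniformly invertible and $\det K(\omega_j)$ has no zeros on $\Omega^\prime_j$. Once this interpretation is granted, the proof reduces to the algebraic observation that a ratio of polynomials with non-vanishing denominator is holomorphic, and there is no further analytic obstacle to overcome.
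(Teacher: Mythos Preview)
Your proof is correct and takes a genuinely different, more elementary route than the paper. The paper proceeds by a direct difference-quotient argument: writing $\omega+z$ for the perturbation in the $j$-th coordinate, it manipulates the Galerkin equations to express $(G(\S_\ell^\nu(\omega+z)f)-G(\S_\ell^\nu(\omega)f))/z$ as an integral involving $(A^\nu(\cdot,\omega+z)-A^\nu(\cdot,\omega))/z$, then uses Cauchy's integral formula for the (trivially holomorphic) map $\omega_j\mapsto A^\nu(\cdot,\omega)$ together with the Lipschitz estimate of Lemma~\ref{lem:strang} to pass to the limit $z\to 0$ in $\C$ and obtain an explicit formula for the complex derivative. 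Your argument instead exploits the finite-dimensionality of $\XX_\ell$ to rewrite the Galerkin problem as a linear system with affine-in-$\omega_j$ stiffness matrix, and invokes Cramer's rule to conclude that the solution---and hence $F$---is a rational function of $\omega_j$ whose poles lie outside $\Omega'_j$.

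Each approach has its virtues. Yours is shorter, purely algebraic, and actually yields the stronger conclusion that $F$ is rational, not merely holomorphic; but it is tied to $\XX_\ell$ being finite-dimensional and to the affine parametrization~\eqref{eq:KL}. The paper's argument would extend to $\ell=\infty$ and to any coefficient $A^\nu$ that is merely holomorphic (rather than affine) in $\omega_j$; it also produces the explicit derivative formula $\partial_{\omega_j}G(\S_\ell^\nu(\omega)f)=-\int_D \partial_{\omega_j}A^\nu\,\nabla(\S_\ell^\nu(\omega)f)\cdot\nabla g^\nu\,dx$, though this formula is not used later. For the purposes of this section your approach is entirely adequate.
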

\begin{proof}
Along the lines of the difference quotient argument in \cite{CDS10_404}, we verify complex differentiability
of the parametric solutions.

Fix $j\in\N$. 
Given $z\in\C$, define $\omega+z\in \C^\N$ by $(\omega+z)_i=\omega_i$ for all 
$i\neq j$ and $(\omega +z)_j=\omega_j+z$.
Let $z$ be sufficiently small such that there exists 
$\eps\geq 2|z|$ with $B_\eps(\omega)\subseteq \Omega^\prime$.
By definition, we have for $v\in\XX_\ell$
 \begin{align*}
  0&= a_{\omega+z}^\nu(\S_\ell^\nu(\omega+z) f,v)-a_{\omega}^\nu(\S_\ell^\nu(\omega) f,v)\\
  &=\int_{D}(A^\nu(x,\omega+z)-A^\nu(x,\omega))\nabla\S_\ell^\nu(\omega+z) f\cdot \nabla v\,dx +
  a_{\omega}^\nu(\S_\ell^\nu(\omega+z) f-\S_\ell^\nu(\omega) f, v).
 \end{align*}
Let $g^\nu\in \XX_\ell$ denote the representer of $G(\cdot)|_{\XX_\ell}$ with respect to $a_{\omega}^\nu$. 
This and the above allows us to compute
\begin{align}\label{eq:analytic1}
\begin{split}
 &\frac{G(\S_\ell^\nu(\omega+z) f)-G(\S_\ell^\nu(\omega) f)}{z} = \frac{a_{\omega}^\nu(\S_\ell^\nu(\omega+z)   
   f-\S_\ell^\nu(\omega) f,g^\nu)}{z}\\
 &\qquad\qquad
  =-\int_{D}\frac{A^\nu(x,\omega+z)-A^\nu(x,\omega)}{z}\nabla\S_\ell^\nu(\omega+z) f\cdot \nabla g^\nu\,dx.
 \end{split}
\end{align}
Since $A^\nu$ is holomorphic, 
Cauchy's integral formula shows for $B_{\eps}(\omega_j)\subset \Omega^\prime_j$ 
that
\begin{align*}
 \Big|&\frac{A^\nu(x,\omega+z)- A^\nu(x,\omega)}{z}-\partial_{\omega_j}A^\nu(x,\omega)\Big|\\
 &\qquad= 
 \frac{1}{2\pi}\Big|\int_{\partial B_\eps(\omega_j)}
 \frac{1}{z}\Big(\frac{A^\nu(x,y)}{(y-(\omega_j+z))} - \frac{A^\nu(x,y)}{(y-\omega_j)}\Big)-\frac{A^\nu(x,y)}{(y-\omega_j)^2}\,dy\Big|
 \\
 &\qquad= 
 \frac{1}{2\pi}\Big|\int_{\partial B_\eps(\omega_j)}\frac{A^\nu(x,y)}{(y-\omega_j-z)(y-\omega_j)} -\frac{A^\nu(x,y)}{(y-\omega_j)^2}\,dy\Big|
 \\
 &\qquad= 
 \frac{1}{2\pi}\Big|\int_{\partial B_\eps(\omega_j)}\frac{A^\nu(x,y)z}{(y-\omega_j-z)(y-\omega_j)^2}\,dy\Big|
 \\
 &\qquad
  \lesssim \eps^{-2}\norm{A^\nu}{L^\infty(\Omega\times D)}|z|\;.
\end{align*}
This uniform convergence in $|z|$ together with Lemma~\ref{lem:strang} shows that 
passing to the limit $z\to 0$ in $\C$ in~\eqref{eq:analytic1} leads to
\begin{align*}
 \partial_{\omega_j}G(\S_{\omega}^\nu f)= -\int_{D}\partial_{\omega_j}A^\nu(x,\omega)\nabla\S_\ell^\nu(\omega) f\cdot \nabla g^\nu\,dx\in\C.
\end{align*}
This shows that $F$ is complex differentiable and thus holomorphic. 
\end{proof}

\begin{lemma}\label{lem:derivative}
Let $(\varrho_j)_{j\in\N}$ be a positive sequence such that 
\begin{align*}
 \Omega\subset \Omega^\prime:=\prod_{j\in\N} B_{1+\varrho_j}(0)
\end{align*}
and that all the results of Section~\ref{sec:proderr} hold true with $\Omega^\prime$ instead of $\Omega$.
	Given $\ell,\nu\in\N$, the map $F_\ell^\nu\colon \Omega \to \R$, $\omega\mapsto G(D_\ell^\nu(\omega))$ satisfies
 \begin{align*}
 &\frac{\norm{\partial_\omega^\alpha F_\ell^\nu}{L^\infty(\Omega)}}{\norm{f}{L^2(D)}\norm{g}{L^2(D)}}\\
 &\;\leq \begin{cases}0 & \hspace{-2mm}\sum_{i=s_\nu+1}^\infty \alpha_i>0,
\\
C_{\rm der}\frac{\alpha!h_\ell^2}{\prod_{i=1}^\infty \varrho_i^{\alpha_i}} 
	 \min\{1,\sup_{\omega\in \Omega^\prime}\norm{A^\nu-A^{\nu-1}}{W^{1,\infty}(D)}\} &\text{ else,}
\end{cases}
 % \norm{F}{\WW_{\nu}^\alpha}\leq C_{\rm QMC} h_\ell\sup_{\omega\in\Omega}\norm{\phi_\nu}{W^{1,\infty}(D)}\norm{f}{L^2(D)}.
 \end{align*}
for all multi-indices $\alpha\in\N^\N$ with $|\alpha|<\infty$. 
The constant $C_{\rm der}>0$ depends only on $C_{\rm prod}$ from Theorem~\ref{thm:proderr}.
 \end{lemma}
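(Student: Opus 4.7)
The plan is to combine the separate holomorphy of parametric Galerkin solutions from Lemma~\ref{lem:analytic} with the uniform bound from Theorem~\ref{thm:proderr}, both pushed out to the enlarged polydisc $\Omega'$, and then read off the derivative bounds via a multivariate Cauchy integral formula.

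First I would dispatch the vanishing case. By the structure of the KL parametrization~\eqref{eq:KL}, both $A^\nu$ and $A^{\nu-1}$ depend only on $(\omega_j)_{j=1}^{s_\nu}$, so all four Galerkin solutions composing $D_\ell^\nu$ depend only on these finitely many coordinates. Hence $F_\ell^\nu$ is independent of $\omega_i$ for $i>s_\nu$, giving $\partial_\omega^\alpha F_\ell^\nu\equiv 0$ as soon as $\sum_{i>s_\nu}\alpha_i>0$. This settles the first case of the bound.

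For the nontrivial case, I would set $S:=\{i:\alpha_i>0\}\subseteq\{1,\ldots,s_\nu\}$, which is finite because $|\alpha|<\infty$, and form the closed polydisc $\prod_{i\in S}\overline{B_{\varrho_i}(\omega_i)}$ around $\omega\in\Omega$. Since $\omega_i\in[-1/2,1/2]$, this polydisc lies inside $\prod_{i\in S}B_{1+\varrho_i}(0)\subseteq\Omega'$. Under the standing hypothesis that the results of Section~\ref{sec:proderr} carry over to $\Omega'$, Theorem~\ref{thm:proderr} provides
\begin{align*}
\sup_{z\in\Omega'}|F_\ell^\nu(z)|\lesssim h_\ell^2\min\bigl\{1,\sup_{z\in\Omega'}\norm{(A^\nu-A^{\nu-1})(z)}{W^{1,\infty}(D)}\bigr\}\norm{f}{L^2(D)}\norm{g}{L^2(D)},
\end{align*}
the implicit constant absorbing $\sup_{z\in\Omega'}\norm{A^{\nu-i}(z)}{W^{1,\infty}(D)}$ for $i=0,1$ coming from $\overline{C}_{\rm prod}(\omega)$. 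Next, applying Lemma~\ref{lem:analytic} term by term to the four Galerkin pieces of $D_\ell^\nu$ shows that $F_\ell^\nu$ extends to a function separately holomorphic on $\prod_j\Omega'_j$. Iterating the one-variable Cauchy integral formula over $\prod_{i\in S}\partial B_{\varrho_i}(\omega_i)$ then yields
\begin{align*}
\partial_\omega^\alpha F_\ell^\nu(\omega)=\frac{\alpha!}{(2\pi\i)^{|S|}}\int_{\prod_{i\in S}\partial B_{\varrho_i}(\omega_i)}\frac{F_\ell^\nu(z_S,\omega_{\N\setminus S})}{\prod_{i\in S}(z_i-\omega_i)^{\alpha_i+1}}\prod_{i\in S}dz_i,
\end{align*}
and estimating with $|z_i-\omega_i|=\varrho_i$ on each circle delivers the standard Cauchy bound $|\partial_\omega^\alpha F_\ell^\nu(\omega)|\leq \alpha!\prod_i\varrho_i^{-\alpha_i}\sup_{\Omega'}|F_\ell^\nu|$. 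Combining with the previous display gives exactly the claimed inequality.

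The main obstacle is not the Cauchy machinery itself, which is routine once separate holomorphy is in hand, but the verification that $\overline{C}_{\rm prod}(\omega)$ remains uniformly controlled on the enlarged complex polydisc $\Omega'$: one needs~\eqref{eq:ell}--\eqref{eq:cont} and the $W^{1,\infty}$-bounds on $A^\nu$ to survive the complex extension, and this is exactly what the hypothesis on $\Omega'$ provides. A small secondary point is that Lemma~\ref{lem:analytic} is stated for $G(\S_\ell^\nu f)$ rather than for $F_\ell^\nu$ directly, but extending it is immediate since $F_\ell^\nu$ is a fixed linear combination of four such evaluations at levels $\ell,\ell-1$ and parameters $\nu,\nu-1$.
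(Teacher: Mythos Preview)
Your proposal is correct and follows essentially the same route as the paper: extend $F_\ell^\nu$ holomorphically to $\Omega'$ via Lemma~\ref{lem:analytic}, bound it uniformly there by Theorem~\ref{thm:proderr}, and read off the derivative bound from an iterated Cauchy integral formula over the polycircle $\prod_{i\in S}\partial B_{\varrho_i}(\omega_i)$. The only cosmetic difference is that the paper invokes Lemma~\ref{lem:strang} to record joint continuity before stating the multivariate Cauchy formula, whereas you go directly to the iterated one-variable version; and you make the vanishing case $\sum_{i>s_\nu}\alpha_i>0$ explicit, which the paper leaves implicit.
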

\begin{proof}
For brevity of presentation, we fix $\ell$ and $\nu$ and write $F:=F_\ell^\nu$.
Lemma~\ref{lem:analytic} shows that $F$ can be extended to a function $F\colon \Omega^\prime\to \C$, 
which is holomorphic in each coordinate $\omega_j$.
Moreover, Lemma~\ref{lem:strang} proves that $F$ is uniformly continuous in $\Omega$. 
Therefore, we obtain immediately by induction that $F$ satisfies the multidimensional analog of  Cauchy's integral formula
for all $\omega\in \Omega^\prime$
\begin{align*}
F(\omega)=(2\pi\i)^{-n}\int_{\partial B_{\eps_1}(\omega_{d_1})} \cdots \int_{\partial B_{\eps_n}(\omega_{d_n})}
\frac{F(z)}{(z_1-\omega_{d_1})\ldots (z_{n}-\omega_{d_{n}})}\,dz_1\ldots dz_n,
\end{align*}
where $(d_1,\ldots,d_n)\in\N^n$ contains exactly $n$ distinct dimensions and 
the parameters
$\eps_i>0$, $i=1,\ldots,n$ are chosen so small that the integration domains 
of the contour integrals above are contained in $\Omega^\prime$. 
This shows immediately that for any multi-index $\alpha\in\N_0^\N$ with $|\alpha|<\infty$, 
$\partial_\omega^\alpha F$ is holomorphic in each variable.
Iterated application of Cauchy's integral formula shows for all $\omega \in \Omega$ that
\begin{align*}
\partial_\omega^\alpha F(\omega) 
= 
\Big(\prod_{i=1\atop \alpha_i\neq 0}^\infty \frac{\alpha_i!}{2\pi\i}\Big) 
\int_{\prod\limits_{i=1\atop \alpha_i\neq 0}^\infty \partial B_{\varrho_i}(\omega_i)} 
     \;\frac{F(z)}{\displaystyle\prod_{i=1\atop \alpha_i\neq 0}^\infty (z_i-\omega_i)^{\alpha_{i}+1}}
\,dz\;.
\end{align*}
This shows immediately
\begin{align*}
|\partial_\omega^\alpha F(\omega)|
\leq  
\Big(\prod_{i=1\atop \alpha_i\neq 0}^\infty \frac{\alpha_i!}{2\pi}2 \pi \varrho_i^{-\alpha_i}\Big)
\norm{F}{L^\infty(\Omega^\prime)} 
\leq 
\alpha!\Big(\prod_{i=1}^\infty\varrho_i^{-\alpha_i}\Big)\norm{F}{L^\infty(\Omega^\prime)}.
\end{align*}
This and Theorem~\ref{thm:proderr} with $A^\nu(\omega)= \phi_0+\sum_{i=1}^\nu \omega_i\phi_i$ conclude the proof.
\end{proof}

\begin{lemma}\label{lem:derivative2}
Define for sufficiently small $\delta>0$
\begin{align*}
 \beta_i:= \frac{\norm{\phi_i}{W^{1,\infty}(D)}}{({\rm ess} \inf_{x\in D}\phi_0(x)-2\delta)}.
\end{align*}
Given $\ell,\nu\in\N$, 
the map $F\colon \Omega \to \R$, $\omega\mapsto G(D_\ell^\nu(\omega))$ 
satisfies 
 \begin{align*}
 \norm{\partial_\omega^\alpha F}{L^\infty(\Omega)}
 \leq \widetilde C_{\rm der} 
 \begin{cases}0 & \sum_{i=s_\nu+1}^\infty \alpha_i>0, 
  \\
  \Big(\prod_{i=1}^{s_\nu} \beta_i^{\alpha_i}\Big) h_\ell^2  \norm{f}{L^2(D)}\norm{g}{L^2(D)}  
 &\text{else,}                                                        
 \end{cases}
 % \norm{F}{\WW_{\nu}^\alpha}\leq C_{\rm QMC} h_\ell\sup_{\omega\in\Omega}\norm{\phi_\nu}{W^{1,\infty}(D)}\norm{f}{L^2(D)}.
 \end{align*}
for all multi-indices $\alpha\in\N_0^{\N}$ with $|\alpha|\leq 2$. 
The constant $\widetilde C_{\rm der}>0$ depends only on $C_{\rm der}$ from Lemma~\ref{lem:derivative}, $\delta$, and $(\phi_j)_{j\in\N}$.
\end{lemma}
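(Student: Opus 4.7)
The plan is to apply Lemma~\ref{lem:derivative} with a carefully chosen, $\alpha$-dependent sequence of radii $(\varrho_j)_{j\in\N}$. The restriction $|\alpha|\leq 2$ is crucial: the support $J:=\{i:\alpha_i>0\}$ then satisfies $|J|\leq 2$, so we can afford to enlarge the complex domain substantially in only one or two coordinates. If some $\alpha_i>0$ with $i>s_\nu$, the bound is zero by the first case of Lemma~\ref{lem:derivative}, since $A^\nu$---and hence $F$---does not depend on $\omega_i$ for $i>s_\nu$. Thus assume $J\subseteq\{1,\ldots,s_\nu\}$.

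For $j\in J$ I would set $\varrho_j:=\eta\delta/\norm{\phi_j}{W^{1,\infty}(D)}$, with a small universal constant $\eta>0$ (e.g.\ $\eta=1/8$), and $\varrho_j:=\varepsilon_0$ arbitrarily small for $j\notin J$. The inclusion $\Omega\subset\Omega^\prime:=\prod_j B_{1+\varrho_j}(0)$ is automatic since $[-1/2,1/2]\subset B_1(0)$. Inserting these radii into Lemma~\ref{lem:derivative} and bounding its minimum factor by $1$ yields
\begin{align*}
\norm{\partial_\omega^\alpha F}{L^\infty(\Omega)}
\lesssim \alpha!\,h_\ell^2 \prod_{i\in J}\Big(\frac{\norm{\phi_i}{W^{1,\infty}(D)}}{\eta\delta}\Big)^{\alpha_i}\norm{f}{L^2(D)}\norm{g}{L^2(D)},
\end{align*}
which coincides with $\prod_i\beta_i^{\alpha_i}\,h_\ell^2\norm{f}{L^2(D)}\norm{g}{L^2(D)}$ up to the factor $(({\rm ess}\inf_{x\in D}\phi_0(x)-2\delta)/(\eta\delta))^{|\alpha|}$; this factor is bounded for $|\alpha|\leq 2$ at fixed $\delta,\eta$ and, together with $\alpha!\leq 2$, is absorbed into $\widetilde C_{\rm der}$.

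The main obstacle is verifying the standing hypothesis of Lemma~\ref{lem:derivative}, namely that the estimates of Section~\ref{sec:proderr} remain valid on $\Omega^\prime$. For $\omega\in\Omega^\prime$ we have $|\omega_j|\leq 1+\varrho_j$ and hence
\begin{align*}
|A^\nu(x,\omega)-\phi_0(x)| \leq \sum_{j=1}^{s_\nu}\norm{\phi_j}{L^\infty(D)}+\sum_{j\in J}\eta\delta+\varepsilon_0\sum_{j=1}^{s_\nu}\norm{\phi_j}{L^\infty(D)},
\end{align*}
using $\varrho_j\norm{\phi_j}{L^\infty(D)}\leq\varrho_j\norm{\phi_j}{W^{1,\infty}(D)}=\eta\delta$ for $j\in J$. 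Under the smallness assumption $\sum_j\norm{\phi_j}{L^\infty(D)}+3\delta\leq {\rm ess}\inf_{x\in D}\phi_0(x)$---the precise content of ``sufficiently small $\delta>0$'' and the implicit condition for the $B_{1+\varrho_j}(0)$-extension in Lemma~\ref{lem:derivative} to preserve ellipticity---the above is at most ${\rm ess}\inf\phi_0-2\delta$ once $\varepsilon_0$ is chosen small enough, giving $\Re A^\nu\geq 2\delta>0$ uniformly on $\Omega^\prime$. This uniform lower bound substitutes for $A_{\rm min}$ in Section~\ref{sec:proderr}, with all constants there now depending additionally on $\delta$ and $(\phi_j)_{j\in\N}$, and Lemma~\ref{lem:derivative} applies. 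The tight budget balance $|J|\eta\leq 1/4$ used here is exactly what forces the restriction to $|\alpha|\leq 2$; higher-order derivatives would require either a smaller $\eta$ (worsening the constant) or more perturbation room than $\delta$ allows.
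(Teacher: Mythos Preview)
Your proof is correct and follows the same approach as the paper: choose $\alpha$-dependent radii $\varrho_j$ that are large (of order $\|\phi_j\|_{W^{1,\infty}(D)}^{-1}$) on $J=\{i:\alpha_i>0\}$ and tiny elsewhere, verify that the extended parameter domain $\Omega'$ still yields uniformly elliptic coefficients, and invoke Lemma~\ref{lem:derivative}. The one noteworthy difference is the scaling: the paper sets $\varrho_j=(\mathrm{ess}\inf_{x\in D}\phi_0(x)-2\delta)\,\alpha_j/(2\|\phi_j\|_{W^{1,\infty}(D)})$ for $\alpha_j>0$, so that $\prod_i\varrho_i^{-\alpha_i}\leq \prod_i\beta_i^{\alpha_i}$ holds without an extra $\delta$-dependent prefactor, and the bound $\sum_{\alpha_i>0}\varrho_i\|\phi_i\|\leq (\mathrm{ess}\inf\phi_0-2\delta)$ uses $|\alpha|\leq 2$ directly; your choice $\varrho_j=\eta\delta/\|\phi_j\|_{W^{1,\infty}(D)}$ works equally well but forces you to absorb the factor $\big((\mathrm{ess}\inf\phi_0-2\delta)/(\eta\delta)\big)^{|\alpha|}$ into $\widetilde C_{\rm der}$, which is harmless since $|\alpha|\leq 2$ and the constant may depend on $\delta$ and $(\phi_j)_j$.
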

\begin{proof}
 Given $\alpha\in\N^{\N_0}$ with $|\alpha|\leq 2$ an admissible
 sequence $(\varrho_j)_{j\in\N}$ in Lemma~\ref{lem:derivative} is, given $\eps>0$,
 \begin{align*}
  \varrho_j:=\begin{cases}(\inf_{x\in D}\phi_0(x)-2\delta)\alpha_j/2\norm{\phi_j}{W^{1,\infty}(D)}^{-1}
                                 & \text{for all }j\in\N\text{ with }\alpha_j>0,\\
                           \eps  &\text{for all }j\in\N\text{ with }\alpha_j=0.
             \end{cases}
 \end{align*}
 This sequence satisfies
 \begin{align*}
  \inf_{\omega_i \in B_{1+\varrho_i}(0): i\in\N}
  \Re \big(\phi_0+ \sum_{i=1}^\nu \omega_i \phi_i\big)
  \geq \phi_0-({\rm ess} \inf_{x\in D}\phi_0(x)-2\delta)
          -\eps\sum_{i=1}^\infty \norm{\phi_j}{L^\infty(D)}
  \geq \delta
 \end{align*}
for sufficiently small $\eps>0$ (here $\Re$ denotes the real part). 
Moreover, the term $\norm{\phi_0+ \sum_{i=1}^\nu \omega_i \phi_i}{W^{1,\infty}(D)}$ remains 
uniformly bounded in $\Omega^\prime:=\prod_{i=1}^\infty B_{1+\varrho_i}(0)$.
This ensures that $\Omega^\prime$ satisfies all the assumptions required for $\Omega$. Thus, 
all results of Section~\ref{sec:proderr} remain valid for $\Omega^\prime$ instead of $\Omega$.
In particular, the constant
 $C_{\rm prod}(\omega)$ from Theorem~\ref{thm:proderr} is uniformly bounded in $\omega\in \Omega^\prime$.
The affine-parametric map $\omega\mapsto A^\nu(x,\omega)$ 
is holomorphic in each coordinate in $\Omega^\prime$, with 
constant derivative
\begin{align*}
 \partial_{\omega_j} A^\nu(x,\omega)=\begin{cases}
                                      \phi_j(x) &\text{for } j\leq s_\nu,\\
                                      0 &\text{else.}
                                     \end{cases}
\end{align*}
Moreover, since $|\alpha|\leq 2$ there holds 
\begin{align*}
\prod_{i=1}^\infty \varrho_i^{-\alpha_i}\leq \prod_{i=1}^\infty \beta_i^{\alpha_i}.
\end{align*}
This, together with Lemma~\ref{lem:derivative} concludes the proof.
\end{proof}

\begin{lemma}\label{lem:aux}
Let $g\in L^\infty(\Omega)$ be sufficiently smooth and let $g$ depend only on the first 
$s\in\N$ dimensions, i.e., $\partial_{\omega_i} g=0$ for all $i>s$.
For $0\leq r\leq s$ and $x=(x_1,x_2,\ldots,x_s)\in\Omega^s$, define the function space
\begin{align*}
	\PP^s_r(\Omega):={\rm span}\set{f\in L^\infty(\Omega)}{f(x)=\sum_{i=r+1}^s \alpha(x_1,\ldots,x_r)x_i,\, \alpha(x_1,\ldots,x_r)\in\R}.
\end{align*}
Assume that $\omega\in \Omega$ with $\omega_i=0$ for all $i>r$ implies $g(\omega)=0$.
Then, there holds
\begin{align*}
	\norm{g(\omega)}{L^\infty(\Omega)}\leq 
	\sum_{i=r+1}^s\norm{\partial_{\omega_i}g}{L^\infty(\Omega)}.
\end{align*}
Moreover, there exists $g_0\in \PP_r^s(\Omega)$ such that
\begin{align*}
	\norm{g(\omega)-g_0(\omega)}{L^\infty(\Omega)}\leq \frac12
	\sum_{i=r+1}^s\sum_{j=r+1}^i\norm{\partial_{\omega_i}\partial_{\omega_j}g}{L^\infty(\Omega)}.
\end{align*}
\end{lemma}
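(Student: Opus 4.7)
The plan is to Taylor-expand $g$ in the variables $\omega_{r+1},\ldots,\omega_s$ about the base point $\omega_\star:=(\omega_1,\ldots,\omega_r,0,\ldots,0)$, at which $g$ vanishes by hypothesis. This removes the constant term, so the first bound follows from the first-order expansion, and the second from identifying the linear part as $g_0$ and bounding the quadratic Taylor remainder.

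For the first bound I would write the telescoping identity
\begin{align*}
g(\omega)=\sum_{i=r+1}^s\big[g(\omega_1,\ldots,\omega_i,0,\ldots,0)-g(\omega_1,\ldots,\omega_{i-1},0,\ldots,0)\big],
\end{align*}
which uses $g(\omega_\star)=0$. The fundamental theorem of calculus recasts each summand as $\int_0^{\omega_i}(\partial_{\omega_i}g)(\omega_1,\ldots,\omega_{i-1},t,0,\ldots,0)\,dt$, and $|\omega_i|\leq 1/2$ on $\Omega=[-1/2,1/2]^\N$ bounds it by $\tfrac12\|\partial_{\omega_i}g\|_{L^\infty(\Omega)}$. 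Summation over $i$ then gives the claim, in fact with a factor $1/2$ to spare.

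For the second bound, set
\begin{align*}
g_0(\omega):=\sum_{i=r+1}^s(\partial_{\omega_i}g)(\omega_\star)\,\omega_i,
\end{align*}
which lies in $\PP^s_r(\Omega)$ because the coefficient of $\omega_i$ depends only on $(\omega_1,\ldots,\omega_r)$. Introducing the one-parameter reduction $\phi(t):=g(\omega_1,\ldots,\omega_r,t\omega_{r+1},\ldots,t\omega_s)$, we have $\phi(0)=0$ and $\phi'(0)=g_0(\omega)$, so Taylor's formula with integral remainder yields
\begin{align*}
g(\omega)-g_0(\omega) &= \int_0^1(1-t)\,\phi''(t)\,dt, \\
\phi''(t) &= \sum_{i,j=r+1}^s\omega_i\omega_j\,(\partial_{\omega_i}\partial_{\omega_j}g)(\omega_1,\ldots,\omega_r,t\omega_{r+1},\ldots,t\omega_s).
\end{align*}
Using $|\omega_i\omega_j|\leq 1/4$ and $\int_0^1(1-t)\,dt=1/2$, together with the symmetry of mixed partials to bound the full double sum $\sum_{i,j=r+1}^s$ by twice the triangular sum $\sum_{i=r+1}^s\sum_{j=r+1}^i$, then gives the claimed inequality with constants to spare.

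The main obstacle is essentially bookkeeping; there is no genuine technical difficulty. The only subtleties worth noting are (i) the verification that $g_0\in\PP^s_r(\Omega)$, which is immediate from the observation that $(\partial_{\omega_i}g)(\omega_\star)$ depends only on $(\omega_1,\ldots,\omega_r)$, and (ii) the symmetrization $\sum_{i,j}\mapsto\sum_{j\leq i}$, which is comfortable thanks to the slack between the constant $1/8$ obtained in the calculation and the constant $1/2$ appearing in the statement.
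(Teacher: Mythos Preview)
Your proof is correct and follows essentially the same strategy as the paper: Taylor-expand $g$ about $\omega_\star=(\omega_1,\ldots,\omega_r,0,\ldots,0)$, use the vanishing $g(\omega_\star)=0$ to kill the constant term, and identify the linear part as $g_0$. The only cosmetic difference is in how the second-order remainder is written: the paper telescopes coordinate by coordinate (introducing $\omega_{r+1},\ldots,\omega_s$ one at a time), which directly yields the triangular sum $\sum_{j\le i}$, whereas you use the radial path $t\mapsto(\omega_1,\ldots,\omega_r,t\omega_{r+1},\ldots,t\omega_s)$, obtain the full square sum, and then symmetrize back---both routes reach the same bound with the same slack in the constants.
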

\begin{proof}
Let $\omega\in \R^s$.
There holds
\begin{align*}
g(\omega)&=\underbrace{g(\omega_1,\ldots,\omega_r,0,\ldots)}_{=0} 
	+ \sum_{i=r+1}^s\int_0^{\omega_{i}}\partial_{\omega_{i}}g(\omega_1,\ldots,\omega_{i-1},t_i,0,\ldots)\,dt_i\\
	&=
	\sum_{i=r+1}^s\int_0^{\omega_{i}} \Big(\partial_{\omega_i}g(\omega_1,\ldots,\omega_r,0,\ldots)\\
	&\qquad \qquad+\int_0^{t_i}\partial_{\omega_i}^2 g(\omega_1,\ldots,\omega_{i-1},s_i,0,\ldots)\,ds_i\\
	&\qquad \qquad+
	\sum_{j=r+1}^{i-1}
	\int_0^{\omega_j}\partial_{\omega_j}\partial_{\omega_i}g(\omega_1,\ldots,\omega_{j-1},s_j,0,\ldots)\,ds_j\Big)dt_i.
	\end{align*}
	Since the first integrand on the right-hand side does not depend on $\omega_i$, the above implies
	\begin{align*}
		g(\omega)&=
		\sum_{i=r+1}^s \Big(\omega_i\partial_{\omega_i}g(\omega_1,\ldots,\omega_r,0,\ldots)\\
		&\qquad +\int_0^{\omega_{i}}\Big(\int_0^{t_i}\partial_{\omega_i}^2g(\omega_1,\ldots,\omega_{i-1},s_i,0,\ldots)\,ds_i\\
		&\qquad\qquad +
		\sum_{j=r+1}^{i-1}
		\int_0^{\omega_j}\partial_{\omega_j}\partial_{\omega_i}g(\omega_1,\ldots,\omega_{j-1},s_j,0,\ldots)\,ds_j\Big)dt_i\Big).
	\end{align*}
	Since there holds  $(\omega\mapsto \omega_i\partial_{\omega_i}g(\omega_1,\ldots,\omega_r,0,\ldots))\in\PP^s_r(\Omega)$ for all $i\geq r+1$, 
we conclude the proof.
\end{proof}

\begin{theorem}\label{thm:proderrKL}
Under the assumptions of the current section, there holds
\begin{align}\label{eq:KLest1}
	\norm{G(D_\ell^\nu)}{L^\infty(\Omega)}
	&\leq C_{\rm KL} h_\ell^2\sum_{i=s_{\nu-1}+1}^{s_\nu}\norm{\phi_i}{W^{1,\infty}(D)}\norm{f}{L^2(D)}\norm{g}{L^2(D)}.
\end{align}
Moreover, there exists $g_0\in \PP^{s_\nu}_{s_{\nu-1}}(\Omega)$ such that
\begin{align}\label{eq:KLest2}
\begin{split}
\norm{&G(D_\ell^\nu)-g_0}{L^\infty(\Omega)}\\
&\leq C_{\rm KL} h_\ell^2\sum_{i=s_{\nu-1}+1}^{s_\nu}\sum_{j=s_{\nu-1}+1}^{s_\nu}\norm{\phi_i}{W^{1,\infty}(D)}\norm{\phi_j}{W^{1,\infty}(D)}\norm{f}{L^2(D)}
\norm{g}{L^2(D)}.
\end{split}
\end{align}
The constant $C_{\rm KL}>0$ is independent of $\ell$, $\nu$, and $\omega$.
\end{theorem}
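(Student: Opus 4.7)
\textbf{Proof plan for Theorem~\ref{thm:proderrKL}.} The plan is to apply Lemma~\ref{lem:aux} to $F := G(D_\ell^\nu)\colon \Omega \to \R$, taking $s = s_\nu$ and $r = s_{\nu-1}$. First I would verify the two structural hypotheses. Since $A^\nu(x,\omega) = \phi_0(x) + \sum_{j=1}^{s_\nu}\omega_j \phi_j(x)$ depends only on $(\omega_1,\ldots,\omega_{s_\nu})$, and similarly for $A^{\nu-1}$, the Galerkin solutions $u_\ell^\nu$, $u_{\ell-1}^\nu$, $u_\ell^{\nu-1}$, $u_{\ell-1}^{\nu-1}$ depend only on $(\omega_1,\ldots,\omega_{s_\nu})$, so $\partial_{\omega_i} F = 0$ for $i > s_\nu$. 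Moreover, if $\omega_i = 0$ for all $i > s_{\nu-1}$, then the affine parametrization gives $A^\nu(\cdot,\omega) = \phi_0 + \sum_{j=1}^{s_{\nu-1}}\omega_j\phi_j = A^{\nu-1}(\cdot,\omega)$, hence $u_\ell^\nu(\omega) = u_\ell^{\nu-1}(\omega)$ and $u_{\ell-1}^\nu(\omega) = u_{\ell-1}^{\nu-1}(\omega)$, so $D_\ell^\nu(\omega) = 0$ and $F(\omega) = 0$. Thus both hypotheses of Lemma~\ref{lem:aux} hold.

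Second, I would apply the first inequality of Lemma~\ref{lem:aux} to obtain
\begin{align*}
\norm{F}{L^\infty(\Omega)} \leq \sum_{i=s_{\nu-1}+1}^{s_\nu} \norm{\partial_{\omega_i} F}{L^\infty(\Omega)}.
\end{align*}
For each $i$ in this range, Lemma~\ref{lem:derivative2} applied with $\alpha = e_i$ (a unit multi-index) yields $\norm{\partial_{\omega_i} F}{L^\infty(\Omega)} \lesssim \beta_i h_\ell^2 \norm{f}{L^2(D)}\norm{g}{L^2(D)}$, where $\beta_i \simeq \norm{\phi_i}{W^{1,\infty}(D)}$ (the denominator $\mathrm{ess\,inf}_{x\in D}\phi_0(x) - 2\delta$ is absorbed in the constant). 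Summing over $i$ delivers~\eqref{eq:KLest1}.

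Third, for~\eqref{eq:KLest2} I would invoke the second inequality of Lemma~\ref{lem:aux}, which produces $g_0 \in \PP^{s_\nu}_{s_{\nu-1}}(\Omega)$ satisfying
\begin{align*}
\norm{F - g_0}{L^\infty(\Omega)} \leq \frac{1}{2}\sum_{i=s_{\nu-1}+1}^{s_\nu}\sum_{j=s_{\nu-1}+1}^{i} \norm{\partial_{\omega_i}\partial_{\omega_j} F}{L^\infty(\Omega)}.
\end{align*}
For each pair $(i,j)$ in this sum I invoke Lemma~\ref{lem:derivative2} with $\alpha = e_i + e_j$ (which has $|\alpha| \leq 2$); this gives $\norm{\partial_{\omega_i}\partial_{\omega_j} F}{L^\infty(\Omega)} \lesssim \beta_i \beta_j h_\ell^2 \norm{f}{L^2(D)}\norm{g}{L^2(D)}$, again absorbing the factor $\alpha!\leq 2$ and the $\delta$-dependence in the constant. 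Enlarging the triangular double sum to the full square $\sum_{i,j = s_{\nu-1}+1}^{s_\nu}$ and replacing $\beta_i\beta_j$ by $\norm{\phi_i}{W^{1,\infty}(D)}\norm{\phi_j}{W^{1,\infty}(D)}$ yields~\eqref{eq:KLest2}.

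The only genuine subtlety, and the one step I would check most carefully, is the vanishing property $F(\omega) = 0$ when $\omega_i = 0$ for $i > s_{\nu-1}$: it relies crucially on the exact affine-parametric structure~\eqref{eq:KL}, which makes $A^\nu$ collapse to $A^{\nu-1}$ on this slice so that the discrete solutions coincide and $D_\ell^\nu$ exactly vanishes. Once this identity is in place, the rest of the argument is an essentially mechanical combination of Lemmas~\ref{lem:aux} and~\ref{lem:derivative2}.
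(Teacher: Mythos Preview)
Your argument is correct and for \eqref{eq:KLest2} it coincides exactly with the paper's proof: verify the vanishing hypothesis of Lemma~\ref{lem:aux} for $F=G(D_\ell^\nu)$ with $r=s_{\nu-1}$, $s=s_\nu$, then feed in the second-derivative bounds from Lemma~\ref{lem:derivative2}.

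For \eqref{eq:KLest1} you take a slightly different route than the paper. The paper obtains \eqref{eq:KLest1} directly from Theorem~\ref{thm:proderr}: since $A^\nu-A^{\nu-1}=\sum_{i=s_{\nu-1}+1}^{s_\nu}\omega_i\phi_i$ with $|\omega_i|\leq 1/2$, one has $\norm{A^\nu-A^{\nu-1}}{W^{1,\infty}(D)}\leq \tfrac12\sum_{i=s_{\nu-1}+1}^{s_\nu}\norm{\phi_i}{W^{1,\infty}(D)}$ and the bound drops out immediately. You instead use the first inequality of Lemma~\ref{lem:aux} together with the first-derivative case of Lemma~\ref{lem:derivative2}. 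This is perfectly valid, and has the mild advantage of treating both estimates with the same machinery; the paper's route is shorter because it avoids invoking the holomorphy/Cauchy-integral apparatus behind Lemma~\ref{lem:derivative2} for a bound that follows from the elementary triangle inequality. (A tiny remark: the factor $\alpha!$ you mention is already absorbed into $\widetilde C_{\rm der}$ in the statement of Lemma~\ref{lem:derivative2}, so you need not track it separately.)
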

\begin{proof}
The first estimate~\eqref{eq:KLest1} follows from the definition of $A^\nu$ and Theorem~\ref{thm:proderr}. 
For~\eqref{eq:KLest2}, 
the map $g(\omega):=D_\ell^\nu(\omega)$ satisfies the requirements of Lemma~\ref{lem:aux} with $r=s_{\nu-1}$.
Hence, the result follows immediately from Lemma~\ref{lem:aux} and Lemma~\ref{lem:derivative2}.
\end{proof}
\section{Monte Carlo integration}\label{sec:quad}
This section discusses the Monte Carlo quadrature rules. 
The uniform KL-expansion case (Section~\ref{sec:KL1}) allows us to increase the order of convergence by symmetrization of the Monte Carlo rule.
This section defines the Monte Carlo integration for the case that the random coefficient 
is given by a KL-expansion as discussed in Sections~\ref{sec:KL0}--\ref{sec:KL1}.

We make the standard assumption that the functions $\phi_i$ from~\eqref{eq:KL} satisfy
\begin{align}\label{eq:KLsum}
	 \norm{\phi_j}{W^{1,\infty}(D)}\leq C_{\rm KL} j^{-r}\quad\text{for all }j\in\N
\end{align}
for some $r>1$.
\begin{lemma}\label{lem:KLerr1}
	Define the Monte Carlo rule
	\begin{align*}
			Q_M(g):=\frac{1}{M}\sum_{i=1}^{M}g(X^i)
	\end{align*}
for uniformly distributed i.i.d $X^i\in [-1/2,1/2]^{s_\nu}$.
	Then, under the assumptions of Section~\ref{sec:KL0} given $\ell,\nu\in\N$, 
        the function $F\colon \Omega \to \R$, $\omega\mapsto G(D_\ell^\nu(\omega))$ satisfies 
	\begin{align*}
		\sqrt{\E_{\rm MC}|\E(F)-Q_M(F)|^2}\leq C_{\rm MC} 
		s_{\nu-1}^{1-r} \frac{h_\ell^2}{\sqrt{M}}\norm{f}{L^2(D)}\norm{g}{L^2(D)}.
	\end{align*}	
	Here, $\E_{\rm MC}(\cdot)$ denotes integration over the combined probability spaces of the $X^i$, $i=1,\ldots,M$, 
        whereas $\E(\cdot)$ denotes integration over $\Omega_\nu$.
\end{lemma}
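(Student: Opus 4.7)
The plan is to use the standard Monte Carlo variance identity together with the $L^\infty$ bound on $D_\ell^\nu$ already established in Theorem~\ref{thm:proderrKL0}. Since the samples $X^1,\ldots,X^M$ are i.i.d.\ with distribution matching the measure $\mathbb{P}$ on $\Omega_\nu$, the estimator $Q_M(F)$ is unbiased, so that
\begin{align*}
\E_{\rm MC}|\E(F)-Q_M(F)|^2 \;=\; \frac{\mathrm{Var}(F)}{M} \;\leq\; \frac{\norm{F}{L^\infty(\Omega)}^2}{M}.
\end{align*}
This is the only ``probabilistic'' input; the rest is a deterministic $L^\infty$ estimate on $F(\omega)=G(D_\ell^\nu(\omega))$.

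Next, I would apply the product error estimate \eqref{eq:KLest0} of Theorem~\ref{thm:proderrKL0}, which gives
\begin{align*}
\norm{F}{L^\infty(\Omega)} \;\leq\; C_{\rm KL}\, h_\ell^2 \sum_{i=s_{\nu-1}+1}^{s_\nu}\norm{\phi_i}{W^{1,\infty}(D)} \norm{f}{L^2(D)}\norm{g}{L^2(D)}.
\end{align*}
Here the key point is that $D_\ell^\nu$ only ``sees'' the increment $A^\nu-A^{\nu-1}=\sum_{j=s_{\nu-1}+1}^{s_\nu}\psi_j(\omega_j)\phi_j$, so the sum naturally starts at $s_{\nu-1}+1$.

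To convert the sum of $\norm{\phi_i}{W^{1,\infty}(D)}$ into the factor $s_{\nu-1}^{1-r}$ in the claimed bound, I would invoke the decay assumption \eqref{eq:KLsum} and compare the tail sum to an integral:
\begin{align*}
\sum_{i=s_{\nu-1}+1}^{s_\nu}\norm{\phi_i}{W^{1,\infty}(D)}
\;\leq\; C_{\rm KL}\sum_{i=s_{\nu-1}+1}^\infty i^{-r}
\;\leq\; C_{\rm KL}\int_{s_{\nu-1}}^\infty x^{-r}\,dx
\;=\; \frac{C_{\rm KL}}{r-1}\, s_{\nu-1}^{1-r},
\end{align*}
which uses $r>1$. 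Combining the three displayed estimates yields the asserted bound with $C_{\rm MC}$ absorbing $C_{\rm KL}$ and $1/(r-1)$.

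There is no real obstacle here: the argument is a one-line MC variance identity, the $L^\infty$ bound transported from Theorem~\ref{thm:proderrKL0}, and a tail-sum estimate. The only minor point to check is that the samples $X^i$ are drawn from the same distribution under which $\E$ is computed (so that the MC rule is unbiased and the variance identity $\mathrm{Var}(Q_M(F))=\mathrm{Var}(F)/M$ applies verbatim); this matches the convention of Section~\ref{sec:KL1} and introduces no additional difficulty.
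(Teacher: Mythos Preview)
Your proposal is correct and follows exactly the paper's own proof: the standard Monte Carlo variance identity, the $L^\infty$ bound from Theorem~\ref{thm:proderrKL0}, and the tail-sum estimate $\sum_{j>s_{\nu-1}} j^{-r}\lesssim s_{\nu-1}^{1-r}$ using \eqref{eq:KLsum}. The only minor slip is that in your last paragraph you refer to Section~\ref{sec:KL1} when the lemma is stated under the assumptions of Section~\ref{sec:KL0}; this is inconsequential for the argument.
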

\begin{proof}
The statement follows immediately from the standard Monte Carlo error estimate, Theorem~\ref{thm:proderrKL0}, 
and the fact that $\sum_{j=s_{\nu-1}+1}^{s_{\nu}}j^{-r}\lesssim s_{\nu-1}^{1-r}$.
\end{proof}

By symmetrization of the Monte Carlo sequence, we are able to increase the order of convergence in the truncation parameter $\nu$.
\begin{lemma}\label{lem:KLerr2}
	Define the symmetric Monte Carlo rule 
	\begin{align*}
	Q_M(g):=\frac{1}{2M}\sum_{i=1}^{M}(g(X^i_1,\ldots,X^i_{s_\nu})+g(X^i_1,\ldots,X^i_{s_{\nu-1}},-X^i_{s_{\nu-1}+1},\ldots,-X^i_{s_\nu})),
	\end{align*}
	where the $X^i\in [-1/2,1/2]^{s_\nu}$ are i.i.d. and uniformly distributed. 
        Under the assumptions of Section~\ref{sec:KL1}, there holds $Q_M(g_0)=0$ for all $g_0\in\PP_{s_{\nu-1}}^{s_\nu}(\Omega)$.
	Moreover,  given $\ell,\nu\in\N$, the map $F\colon \Omega \to \R$, $\omega\mapsto G(D_\ell^\nu(\omega))$ satisfies 
	\begin{align*}
		\sqrt{\E_{\rm MC}|\E(F)-Q_M(F)|^2}\leq C_{\rm MC} 
		s_{\nu-1}^{2(1-r)} \frac{h_\ell^2}{\sqrt{M}}\norm{f}{L^2(D)}\norm{g}{L^2(D)}.
	\end{align*}	
Here, $\E_{\rm MC}(\cdot)$ denotes integration over the combined probability spaces of the $X_i$, $i=1,\ldots,2^m$, whereas $\E(\cdot)$ denotes
integration over $\Omega_\nu$.
\end{lemma}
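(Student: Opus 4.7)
The plan is to establish the two assertions in order, then combine them with the approximation bound \eqref{eq:KLest2} from Theorem~\ref{thm:proderrKL} and the decay assumption \eqref{eq:KLsum}.

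First, to see that $Q_M(g_0)=0$ for every $g_0\in\PP_{s_{\nu-1}}^{s_\nu}(\Omega)$, it is enough by linearity to test on a single generator of the form $g_0(x)=\alpha(x_1,\dots,x_{s_{\nu-1}})\,x_i$ with $i\in\{s_{\nu-1}+1,\dots,s_\nu\}$. Writing $\overline X^i:=(X^i_1,\dots,X^i_{s_{\nu-1}},-X^i_{s_{\nu-1}+1},\dots,-X^i_{s_\nu})$ for the sign-flipped partner of $X^i$, the coefficient $\alpha$ is unchanged while $x_i\mapsto -x_i$, so $g_0(X^i)+g_0(\overline X^i)=0$ pathwise. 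Summing over $i$ gives $Q_M(g_0)=0$ identically in the randomness.

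Next, I observe that $\E(g_0)=0$ for every $g_0\in\PP_{s_{\nu-1}}^{s_\nu}(\Omega)$, since the factor $x_i$ in each generator has vanishing mean under the uniform law on $[-1/2,1/2]$ and is independent of $\alpha(x_1,\dots,x_{s_{\nu-1}})$. Combining this with the first step, I can freely subtract any $g_0$ without altering the error:
\begin{align*}
 \E(F)-Q_M(F)\;=\;\E(F-g_0)-Q_M(F-g_0).
\end{align*}
Now I apply the standard Monte Carlo variance bound to the symmetrized rule: the random variables $Y^i:=\tfrac12\bigl((F-g_0)(X^i)+(F-g_0)(\overline X^i)\bigr)$ are i.i.d.\ with mean $\E(F-g_0)$, and $|Y^i-\E(F-g_0)|\leq 2\norm{F-g_0}{L^\infty(\Omega)}$, so
\begin{align*}
 \E_{\rm MC}\bigl|\E(F)-Q_M(F)\bigr|^2\;\leq\;\frac{C\,\norm{F-g_0}{L^\infty(\Omega)}^2}{M}.
\end{align*}

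Finally, Theorem~\ref{thm:proderrKL} supplies a specific $g_0\in\PP_{s_{\nu-1}}^{s_\nu}(\Omega)$ satisfying \eqref{eq:KLest2}. Using the decay hypothesis \eqref{eq:KLsum} I estimate
\begin{align*}
 \sum_{i=s_{\nu-1}+1}^{s_\nu}\sum_{j=s_{\nu-1}+1}^{s_\nu}\norm{\phi_i}{W^{1,\infty}(D)}\norm{\phi_j}{W^{1,\infty}(D)}
 \;\leq\;\Bigl(\sum_{i>s_{\nu-1}} C_{\rm KL}\,i^{-r}\Bigr)^{\!2}\;\lesssim\;s_{\nu-1}^{2(1-r)},
\end{align*}
so that $\norm{F-g_0}{L^\infty(\Omega)}\lesssim h_\ell^2\,s_{\nu-1}^{2(1-r)}\norm{f}{L^2(D)}\norm{g}{L^2(D)}$. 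Inserting this into the variance bound and taking the square root yields the claimed estimate.

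The only genuinely delicate point is the \emph{pathwise} annihilation $Q_M(g_0)=0$ together with $\E(g_0)=0$: this is precisely what lets the subtraction of $g_0$ improve the supremum of the integrand by the full factor $s_{\nu-1}^{(1-r)}$ in the $L^\infty$ estimate, rather than only improving the variance. Once that observation is in place, the remainder is a routine combination of Theorem~\ref{thm:proderrKL} and the classical Monte Carlo bound.
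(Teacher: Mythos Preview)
Your proof is correct and follows essentially the same route as the paper: show $Q_M(g_0)=0$ for $g_0\in\PP_{s_{\nu-1}}^{s_\nu}(\Omega)$ via the sign-flip antisymmetry, subtract $g_0$, apply the standard Monte Carlo variance bound to the $L^\infty$-controlled remainder $F-g_0$, and invoke Theorem~\ref{thm:proderrKL} together with \eqref{eq:KLsum}. Your write-up is in fact more explicit than the paper's, which omits the step $\E(g_0)=0$ and the reformulation via the i.i.d.\ symmetrized samples $Y^i$; these are exactly the details needed to make the terse phrase ``the statement follows from the standard Monte Carlo error estimate and Theorem~\ref{thm:proderrKL}'' rigorous.
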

\begin{proof}
	First, we notice that for $g_0\in \PP_{s_{\nu-1}}^1(\Omega)$,
	there holds 
	\begin{align*}
	g_0(X^i_1,\ldots,X^i_{s_\nu})=-g_0(X^i_1,\ldots,X^i_{s_{\nu-1}},-X^i_{s_{\nu-1}+1},\ldots,-X^i_{s_\nu}).
	\end{align*}
	Therefore, we have $Q_M(g_0)=0$ for all $g_0\in\PP_{s_{\nu-1}}^1(\Omega)$.
	Thus, the statement follows from the standard Monte Carlo error estimate and Theorem~\ref{thm:proderrKL}, 
        where we note with \eqref{eq:KLsum}
	\begin{align*}
	\sum_{i=s_{\nu-1}+1}^{s_\nu}&\sum_{j=s_{\nu-1}+1}^{s_\nu}\norm{\phi_i}{W^{1,\infty}(D)}\norm{\phi_j}{W^{1,\infty}(D)}
        \\
        &\lesssim 
        \sum_{i=s_{\nu-1}+1}^{\infty}\sum_{j=s_{\nu-1}+1}^\infty i^{-r}j^{-r}\lesssim (s_{\nu-1})^{2(-r+1)}.
	\end{align*}
\end{proof}

\section{Multi-Index error control}\label{sec:mi}
The multi-index decomposition allows us to exploit 
the product error estimates and, hence, 
to improve the complexity of the finite-element/Monte Carlo algorithm.
%%%%%%%%%%%%%%%%%%%%%%%%%%%%%%%%%%%%%%%%%%%%%%%%%%%%%%%%%%%%%%%%%%
\subsection{Complexity of MIMCFEM}
\label{sec:mimcfem}
To quantify the complexity, i.e., the error vs. work, of the presently proposed MIMCFEM, 
we rewrite the exact solution as 
($Q_{m}$ denotes one of the MC sample averages $Q_M$ from Section~\ref{sec:quad} with $M=2^m$
 samples)
\begin{align*}
 \E(G(u))&= \sum_{j=0}^\infty (Q_{m_j}-Q_{m_j-1})(G(u))\\
 &= \sum_{j=0}^\infty \sum_{\ell=0}^\infty (Q_{m_j}-Q_{m_{j-1}})(G(u_{\ell}-u_{\ell-1}))\\
  &= \sum_{j=0}^\infty \sum_{\ell=0}^\infty \sum_{\nu=0}^\infty (Q_{m_j}-Q_{m_{j-1}})(G(D_\ell^\nu)),
\end{align*}
where $m_j\in\N$ and $Q_{m_{-1}}:= 0$.
By truncation of the series, we achieve a sparse approximation, i.e., given $N\in\N$
\begin{align*}
 \E(G(u))\approx G_{N}&:= \sum_{0\leq j+\ell+\nu\leq N} (Q_{m_j}-Q_{m_j-1})G(D_\ell^\nu)=
 \sum_{0\leq \ell+\nu\leq N} Q_{m_{N-\ell-\nu}}(G(D_\ell^\nu)).
\end{align*}
Recall the expectation of the Monte Carlo integration $\E_{\rm MC}(\cdot)$  
     and the expectation over $\Omega$ denoted by $\E(\cdot)$.
We define two quantities to quantify the efficiency of the 
presently proposed method: 
the MC sampling error is defined by 
\begin{align*}
 E_N:= \sqrt{\E_{\rm MC}|\E(G(u))-G_{N}|^2}
\end{align*}
whereas the cost model is defined by
\begin{align*}
 C_N:= (\text{The number of computational operations necessary to compute }G_N)
\end{align*}
and obviously depends on the chosen method discussed below.

%\subsection{The KL-expansion case}
First, we establish the cost model. A standard FEM will ensure $h_\ell\simeq 2^{-\ell}$ which implies $\#\TT_\ell \simeq 2^{d\ell}$. 
We assume a linear iterative solver such that solving the sparse FEM system costs $\mathcal{O}(2^{d\ell})$.

Under the assumptions of Section~\ref{sec:KL0} and \ref{sec:KL1}, we assume that we can compute the bilinear forms
\begin{align*}
 a_j(v,w):=\int_D\phi_j(x)\nabla v(x)\nabla w(x)\,dx\quad\text{for all }v,w\in \XX_\ell
\end{align*}
exactly in $\mathcal{O}(\#\TT_\ell)$.
Depending on the truncation parameters $s_\nu$, 
we have to compute $s_\nu$ bilinear forms $a_j(\cdot,\cdot)$ to obtain
in the affine case
\begin{align*}
a_\omega^\nu(v,w)=\sum_{j=1}^{s_\nu} \omega_j a_j(v,w),
\end{align*}
resulting in a cost of $\mathcal{O}(2^{d\ell}s_\nu)$. 
Altogether, this yields
\begin{align*}
C_N\simeq \sum_{0\leq j+\ell+\nu\leq N} 2^{m_j} 2^{d\ell}s_\nu
\end{align*}
Using Lemma~\ref{lem:KLerr1} as well as linear operator notation for $\E(\cdot)$ and $Q_{m_j}$, 
we see that the multi-index error satisfies
 \begin{align*}
  E_N&=\E_{\rm MC}\Big(\Big|\sum_{N< j+\ell+\nu} (Q_{m_j}-Q_{m_{j-1}})G(D_\ell^\nu)\Big|^2\Big)^{1/2}\\
  &\leq \sum_{0\leq\ell+\nu} \E_{\rm MC}\big(|(\E-Q_{m_{\max\{0,N-\ell-\nu+1\}}})G(D_\ell^\nu)|^2\big)^{1/2}\\
  &\lesssim \norm{f}{L^2(D)}\norm{g}{L^2(D)}\sum_{0\leq  \ell+\nu} 2^{-m_{\max\{0,N-\ell-\nu+1\}}/2}2^{-2\ell}s_{\nu-1}^{1-r}.
 \end{align*}
An obvious choice of the parameters $s_\nu$ and $m_j$ is to balance 
the work spent on each of the two tasks such that the three error contributions 
(FEM-discretization error, truncation error, quadrature error) are of equal asymptotic order. 
We define
\begin{align*}
 m_j:= \lceil 4j\rceil\quad\text{and}\quad s_\nu := \lceil 2^{\frac{2\nu}{r-1}}\rceil.
\end{align*}
With this, we have
\begin{align}\label{eq:errKL}
\begin{split}
E_N&\lesssim 
\norm{f}{L^2(D)}\norm{g}{L^2(D)}
\sum_{0\leq  \ell+\nu} 2^{-2{\max\{0,N-\ell-\nu+1\}}}2^{-2\ell}2^{-2\nu}
\\
&\lesssim \norm{f}{L^2(D)}\norm{g}{L^2(D)}(N+1)^22^{-2N} 
\end{split}
\end{align}
as well as
\begin{align}\label{eq:costKL1}
C_N\simeq 
\sum_{0\leq j+\ell+\nu\leq N} 2^{4j} 2^{d\ell}2^{\frac{2\nu}{r-1}}
\lesssim 2^{\max\{4,d,\frac{2}{r-1}\}N}.
\end{align}
Using the symmetrized Monte Carlo rule from Lemma~\ref{lem:KLerr2}, 
we see that the multi-index error improves to
\begin{align*}
E_N
&\lesssim 
\norm{f}{L^2(D)}\norm{g}{L^2(D)}\sum_{0\leq  \ell+\nu} 
           2^{-m_{\max\{0,N-\ell-\nu+1\}}/2}2^{-2\ell}s_{\nu-1}^{2(1-r)}.
\end{align*}
As above, we balance the contributions by
\begin{align*}
m_j:= \lceil 4j\rceil\quad\text{and}\quad s_\nu := \lceil 2^{\frac{\nu}{r-1}}\rceil.
\end{align*}
With this, we obtain the same error estimate as for the plain
Monte Carlo rule~\eqref{eq:errKL}, but with an improved cost estimate of
\begin{align}\label{eq:costKL2}
C_N^{\rm symm}\lesssim 2^{\max\{4,d,\frac{1}{r-1}\}N}.
\end{align}

\subsection{Comparison to multi-level (quasi-) Monte Carlo FEM}
The main difference to multi-level Monte Carlo is 
that the present method can capitalize on the approximation of the random coefficient, 
whereas the multi-level method has to treat this term in an a-priori fashion. 
However, the multi-level method can exploit symmetry properties of the exact operator 
to improve the rate of convergence
in the approximation of the random coefficient, i.e., 
it achieves the same accuracy with a cost 
$\mathcal{O}(2^{\frac{1}{r-1}N})$ instead of $\mathcal{O}(2^{\frac{2}{r-1}N})$.
This is worked out in the quasi-Monte Carlo case in~\cite{hoqmc} 
but transfers verbatim to the Monte Carlo case. 
Therefore, the multi-level (quasi-) Monte Carlo method with the same
level structure as described in the previous section will achieve
a cost versus error relation given by 
(see~\cite[Theorem~12]{multilevel} with $p=q=1/r-\eps$ for all $\eps>0$ and $\tau=2$ in their notation)
\begin{align*}
 E_N^{\rm ML}\lesssim (N+1)^\alpha 2^{-2N} \quad\text{with}\quad C_N^{\rm ML} 
\lesssim  
2^{\max\{4\lambda,d\}N + \frac{1}{r-1}N},
\end{align*}
where $\alpha>0$ is a constant and $1/(2\lambda)$ for $\lambda \in (1/2,1]$
is the convergence rate of the QMC quadrature 
(with the Monte Carlo rate \emph{formally} corresponding here 
 to the choice $1/(2\lambda)=1/2$).
Comparing the above estimates with the error vs. work 
estimates for the MIMCFEM from Section~\ref{sec:mimcfem}, 
we aim to identify parameter regimes in which the presently proposed MIMCFEM 
improves over alternative multi-level methods in terms of asymptotic error versus cost. 
We observe that standard multi-index Monte Carlo 
improves the multi-level Monte Carlo in case that 
\begin{align*}
 \max\{4,d,\frac{2}{r-1}\}<\max\{4,d\} + \frac{1}{r-1}\quad\text{equivalent to}\quad 
  \max\{4,d\}>\frac{1}{r-1},
\end{align*}
i.e., when the sampling and the FEM computations dominate the approximation of the random coefficient.
We conclude that the symmetric multi-index Monte Carlo method from Lemma~\ref{lem:KLerr2} 
\emph{improves the multi-index Monte Carlo method for all parameter combinations}. 
For $\lambda \in (1/2,1]$,
\begin{align*}
\max\{4,d,\frac{1}{r-1}\}
<
\max\{4\lambda,d\} + \frac{1}{r-1}
\quad\text{equivalent to}\quad 4-4\lambda<\frac{1}{r-1}
\end{align*}
the presently proposed, symmetric multi-index Monte Carlo FE method even improves
in terms of error vs. work as compared to the
    first order multi-level quasi-Monte Carlo method 
based on e.g. a randomly shifted lattice rule as in \cite{multilevel}. 
%[Attn: Josef, plse. add suitable references here]
This setting represents the case when
the approximation of the random coefficient dominates the sampling and the FEM computations.

%%%%%%%%%%%%%%%%%%%%%%%%%%%%%%%%%%%%%%%%%%%%%%%%%%%%%%%%%%%%%%%%%%%%%%%%%%%%%%%%%%
\section{Extension of the MIFEM convergence to Reduced Regularity in $D$}
\label{sec:RedReg}
%%%%%%%%%%%%%%%%%%%%%%%%%%%%%%%%%%%%%%%%%%%%%%%%%%%%%%%%%%%%%%%%%%%%%%%%%%%%%%%%%%
Up to this point,
the presentation and the error vs. work 
analysis assumed ``full elliptic regularity'' for data and solutions
of the model problem in Section \ref{sec:ModProb}. 
Specifically, we assumed that
the random diffusion coefficient $A$ and the deterministic right hand side $f$ 
in \eqref{eq:linpoisson} belong to $W^{1,\infty}(D)$ and to $L^2(D)$, respectively.
This, together with the convexity of the domain $D$ and the homogeneous Dirichlet
boundary conditions is well known to ensure $\mathbb{P}$-a.s. that 
$u\in L^2(\Omega;H^2(D))$. 
This, in turn, implies first order convergence of
conforming $P_1$-FEM on regular, quasi uniform meshes, and 
second order (super)convergence for continuous linear functionals in $L^2(D)$.
\emph{These somewhat restrictive assumptions were made in order to present 
      the MIFEM approach in the most explicit and transparent way}.
The present MIFEM error analysis is, however, valid under more general assumptions,
which we now indicate. 

	Still considering conforming $P_1$-FEM on regular meshes of triangles,
	\emph{mixed boundary conditions} and non convex polygons $D$, the same results can be shown verbatim by the same line of argument, provided that the following modifications
	of the FE error analysis are taken into account:
	(i) \emph{elliptic regularity}: as is well-known, the $L^2-H^2$ regularity
	result which we used will, in general, cease to be valid for non convex $D$,
	or for mixed boundary value problems. A corresponding theory is available
	and uses weighted Sobolev spaces. 
	We describe it to the extent necessary for extending our 
	error analysis for conforming $P_1$-FEM.
	In polygonal domains $D\subset \mathbb{R}^2$, 
	\emph{weighted, hilbertian Kondrat'ev spaces of order $m\in \N_0$ with shift $a\in \R$}
	are defined by
	\begin{equation}\label{eq:Kondr}
	\KK^m_a(D) := \{ v:D\to \R | r_D^{|\alpha|-a}\partial^\alpha v \in L^2(D), |\alpha|\leq m \}
	\end{equation}
	In \eqref{eq:Kondr}, $\alpha\in \N_0^2$ denotes a multi-index and $\partial^\alpha$
	the usual mixed weak derivative of order $\alpha = (\alpha_1,\alpha_2)$.
	In these spaces, there holds the following regularity result \cite[Thm. 1.1]{BLN2017}.
	\begin{proposition}\label{prop:RegKondr}
	Assume that $D\subset\R^2$ is a bounded polygon with straight sides.
	In $D$ consider the Dirichlet problem \eqref{eq:linpoisson} with 
	random coefficient $A\in L^\infty(\Omega;W^{1,\infty}(D))$ satisfying
	\eqref{eq:minmax}. Then the following holds:
	\begin{enumerate}
	\item
	There exists $\eta>0$ such that for every $|a|<\eta$,
	and for every $f\in \KK^0_{a-1}(D)$, the unique solution $u\in H^1_0(D)$
	of \eqref{eq:linpoisson} belongs to $\KK^2_{a+1}(D)$.
	\item
	For every fixed $f\in \KK^0_{a-1}(D)$, the 
	data-to-solution map $\S: W^{1,\infty}(D) \to \KK^2_{a+1}(D): A\mapsto u$
	is analytic for every $|a|<\eta$.
	\item
	There exists a sequence $\{ \TT^\ell \}_{\ell \geq 0}$ of regular,
	simplicial triangulations with refinements towards the corners of $D$
	such that there holds the approximation property
	\begin{equation}\label{eq:Approx}
	\forall w\in \KK^2_{a+1}(D):
	\quad \inf_{v\in S^1(D;\TT^\ell)} \| w - v \|_{H^1(D)} 
	\leq 
	Ch_\ell \| w \|_{ \KK^2_{a+1}(D) } \;,
	\end{equation}
	where $h_\ell := \max\{ {\rm diam}(T): T\in \TT^\ell \}$ and
	$\XX_\ell = \#(\TT^\ell) \lesssim h_\ell^{-2}$.
	\end{enumerate} 
	\end{proposition}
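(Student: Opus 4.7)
The plan is to prove the three parts in sequence, using classical Kondrat'ev theory for elliptic boundary value problems on polygons. For (1), I would localize $u$ with a smooth partition of unity subordinate to a cover consisting of a neighborhood of each vertex $c_i$ of $D$ plus an interior patch. On the interior patch, standard interior and straight-edge boundary regularity for divergence-form equations with $W^{1,\infty}$ coefficients yields $H^2$-regularity, which embeds into $\KK^2_{a+1}$ on that piece. Near a vertex $c_i$ with opening $\omega_i$, in local polar coordinates $(r,\theta)$, I would freeze $A$ at $c_i$, reducing the principal part to $-A(c_i)\Delta$. The Mellin transform in $r$ then diagonalizes this operator into a holomorphic family of Dirichlet ODEs in $\theta$ on $(0,\omega_i)$ whose discrete spectrum is $\{\pm n\pi/\omega_i : n\in\N\}$. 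Setting $\eta := \min_i \pi/\omega_i$, for $|a|<\eta$ the Mellin contour avoids all poles and the frozen problem is a weighted isomorphism. Passage to variable $A$ is a perturbation argument: the commutator with the frozen operator yields lower-order terms which are relatively compact in weighted spaces, so a Fredholm alternative combined with the uniqueness inherited from \eqref{eq:ell} closes (1).

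For (2), the cleanest route is the analytic implicit function theorem. Define the entire (bilinear) map
\begin{align*}
\Phi : W^{1,\infty}(D) \times \KK^2_{a+1}(D) \to \KK^0_{a-1}(D), \qquad (A,u) \mapsto -\ddiv(A\nabla u) - f.
\end{align*}
Its partial Fr\'echet derivative in $u$ at a solution is $v\mapsto -\ddiv(A\nabla v)$, which by (1) is a topological isomorphism onto $\KK^0_{a-1}(D)$. The analytic implicit function theorem in Banach spaces then yields holomorphy of $A\mapsto u$. Alternatively, one may reproduce the direct complex-differentiability argument of Lemma~\ref{lem:analytic} in the weighted setting, expanding $\S(A+\varepsilon B)$ as a geometric Neumann series in the perturbation operator $B$.

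For (3), I would adopt the standard Kondrat'ev--Nistor graded-mesh construction. Fix grading exponents $\mu_i\in(0,1]$ at each vertex with $\mu_i > |a|$ (possible since $|a|<\eta \leq \pi/\omega_i$), and build $\TT^\ell$ from a quasi-uniform macro mesh of width $h_\ell$ by iterated local refinement in dyadic annuli $\{r_{c_i}\sim 2^{-k}h_\ell\}$ until the local mesh width satisfies $h_T \lesssim h_\ell\, r_{c_i}^{1-\mu_i}$. Shape regularity is preserved because the refinement pattern follows a fixed reference template near each vertex. A scaling argument on each annulus combined with the Scott--Zhang quasi-interpolant $I_\ell$ yields, for $w\in\KK^2_{a+1}(D)$,
\begin{align*}
\|w - I_\ell w\|_{H^1(T)}^2 \lesssim h_T^2\, r_{c_i}^{-2a}\,\|w\|_{\KK^2_{a+1}(\widetilde T)}^2,
\end{align*}
where $\widetilde T$ is the usual element patch. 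Summing over elements gives \eqref{eq:Approx}. The main obstacle throughout is calibrating the grading parameters $\mu_i$ to the weight $a+1$ so that the total element count stays $O(h_\ell^{-2})$ \emph{and} the mesh remains uniformly shape-regular; this is precisely where the constraint $|a|<\eta$ becomes indispensable.
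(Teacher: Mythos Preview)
The paper does not prove this proposition at all: immediately after the statement it simply cites \cite[Thm.~1.1]{BLN2017} for items 1 and 2, and \cite{AdlerNistor2015,BacutaNistorZikatanov2005,afem} for item 3. Your proposal therefore goes well beyond what the paper itself supplies; you are sketching the content of those references rather than an alternative to any argument given in the paper.

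That said, your sketch is essentially the standard route taken in the cited literature and is broadly correct. For item~1, the localization/freezing/Mellin-transform strategy with spectrum $\{\pm n\pi/\omega_i\}$ and $\eta=\min_i \pi/\omega_i$ is exactly Kondrat'ev's argument; the one point to sharpen is that near a vertex the perturbation $A(\cdot)-A(c_i)$ is \emph{small in operator norm} on a sufficiently small neighborhood (by continuity of $A$), which gives a direct Neumann-series invertibility rather than requiring a Fredholm/compactness detour. For item~2, the analytic implicit function theorem is precisely the mechanism behind \cite[Thm.~1.1]{BLN2017}. For item~3, your graded-mesh construction with local width $h_T\lesssim h_\ell\, r_{c_i}^{1-\mu_i}$ and a Scott--Zhang estimate on dyadic annuli matches the Bacuta--Nistor--Zikatanov approach; the calibration you flag (choosing $\mu_i$ against the weight $a$ so that both the error bound and the cardinality $\#\TT^\ell\lesssim h_\ell^{-2}$ hold) is indeed the only delicate step, and the paper additionally points to \cite{afem} for the nested bisection-tree variant, which your construction does not cover but is not needed for the statement as written.
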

	We refer to \cite[Thm. 1.1]{BLN2017} for the proof of items 1. and 2., and to 
	\cite{AdlerNistor2015,BacutaNistorZikatanov2005,afem} for a proof of item 3.;
	we note in passing that \cite{AdlerNistor2015,BacutaNistorZikatanov2005} cover
	so-called \emph{graded} meshes, whereas item 3. for nested, bisection-tree meshes as
	generated e.g. by adaptive FEM is proved in \cite{afem}.

	With Proposition \ref{prop:RegKondr} at hand, 
	the preceding MIFEM error analysis extends \emph{verbatim} to 
	the present, more general setting: the $H^2(D)$ regularity results
	for the forward problem as well as for the adjoint problem
	extend to $\KK^2_{a+1}(D)$, under the assumption $f,g \in \KK^0_{a-1}(D)$,
	and under identical assumptions on the random coefficient $A$. 
        The use of the Cauchy integral theorem in the weighted function space
        setting is justified by item 2. combined with the (obvious) observation
        that affine-parametric functions such as \eqref{eq:KL}
        depend analytically on the parameters $\omega_j$.

	We also note that other discretizations, such as the symmetric interior-penalty discontinuous Galerkin (SIPDG) FEM,
	admit corresponding error bounds on graded meshes including the 
	superconvergence error bound in $L^2(D)$ \cite{MSS18_2617}. 
	A corresponding MIFEM algorithm and error analysis with exactly
        the same error vs. work bounds could also be 
        obtained for SIPDG discretization of the forward problem.

	We finally mention that Proposition \ref{prop:RegKondr}
	also extends verbatim to homogeneous, mixed boundary conditions,
	to symmetric matrix-valued random diffusion coefficients 
	$A = (a_{ij})_{i,j=1,2}\in W^{1,\infty}(D;\R^{2\times2})$
	(the space $W^{1,\infty}(D)$ could even be slightly larger,
	admitting singular behavior near corners of $D$)
	and to higher orders $m \geq 2$ of differentiation, allowing
	for Lagrangean FEM of polynomial degree $p=m\geq 2$ on locally refined meshes 
	in $D$. 
	A precise statement of these regularity results is available in \cite[Thm. 4.4]{BLN2017}.

%%%%%%%%%%%%%%%%%%%%%%%%%%%%%%%%%%%%%%%%%%%%%%%%%%%%%%%%%%%%%%%%%%%%%%%%%%%%%%%%%%
\section{Numerical experiments}
\label{sec:NumEx}
%%%%%%%%%%%%%%%%%%%%%%%%%%%%%%%%%%%%%%%%%%%%%%%%%%%%%%%%%%%%%%%%%%%%%%%%%%%%%%%%%%
We provide numerical tests in space dimension $2$ to verify the theoretical
results. In the first example, we choose uniform mesh refinement in a convex 
domain $D$ and irregular forcing function $f$ (which is to say in the
present setting of first order FEM that $f\not \in L^2(D)$).
The second example will feature a non-convex domain with
re-entrant corner and sequences $\{ \TT_\ell \}_{\ell}$ of locally refined,
nested regular triangulations of $D$.
%%%%%%%%%%%%%%%%%%%%%%%%%%%%%%%%%%%%%%%%%%%%%%%%%%%%%%%%%%%%%%%%%%%%%%%%%%%%%%%%%
\subsection{Irregular forcing and uniform mesh refinement}
\label{sec:IrregLoad}
%%%%%%%%%%%%%%%%%%%%%%%%%%%%%%%%%%%%%%%%%%%%%%%%%%%%%%%%%%%%%%%%%%%%%%%%%%%%%%%%%%
For purposes of comparison, 
we use a similar example as in \cite[Section~5.2]{ml1}. 
We choose the convex domain $D=[0,1]^2$ and define the 
scalar random coefficient function $A$ by
\begin{align*}
A(x,\omega)&:=1/2+\sum_{k_1,k_2=1}^\infty \frac{\omega_{k_1,k_2}}{(k_1^2+k_2^2)^2}\sin(k_1\pi x_1)\sin(k_2\pi x_2)
\\
&:=1/2+\sum_{j=1}^\infty \frac{\omega_j}{\mu_j}\sin(k_{1,j}\pi x_1)\sin(k_{2,j}\pi x_2),
\end{align*}
where $\mu_j:=(k_{1,j}^2+k_{2,j}^2)^2$ such that $\mu_i\leq \mu_j$ for all $i\leq j$ 
and ties are broken in an arbitrary fashion.
This ensures that the $\phi_j$ satisfy~\eqref{eq:KLsum} with $r=2$. 
The variational form of the problem then reads
\begin{align*}
\mbox{Find} \;\;u\in H^1_0(D):\quad  a(A(\cdot,\omega);u,v)=f(v) \quad \forall v\in H^1_0(D)\;.
\end{align*}
where $f\in H^{-1/2-\eps}(D)$ for all $\eps>0$ is defined by 
$$
f(v):=\int_\Gamma v(x_1,x_2)x_1\,d\Gamma(x_1,x_2) 
     = \sqrt{2} \int_0^1 t \, v(t, 1-t) \,d t 
$$ 
for $\Gamma=\set{(0,1)+r(1,-1)}{0\leq r\leq 1 }$ being a diagonal of $D$.
Note that we choose the weight $x_1$ in the integral in the definition of the right-hand side 
to introduce some non-symmetric quantities and thus avoid super-convergence effects.
We consider the quantity of interest $G(u):=\int_{D^\prime} u\,dx$, 
where $D^\prime=(1/2,1)^2\subset D$.
Whereas the analysis of the present paper is focused 
on the full regularity case with right-hand side $f\in L^2(D)$, 
all arguments remain valid in case of reduced regularity of the right-hand side $f\in H^{-1/2-\eps}(\D)$ 
(for the case of reduced regularity due to re-entrant corners, see the second experiment).  

The finite element discretization is 
based on first order, nodal continuous, piecewise affine 
finite elements $\XX_\ell$ on a uniform partition of $[0,1]^2$ 
into $2^{2\ell+1}$ many congruent triangles 
(one example is shown in Figure~\ref{fig:mesh}). 
The mesh width of this triangulation is $h_\ell= \mathcal{O}(2^{-\ell})$. 
%note to ourselves: longest edge has length \sqrt{2} * w^{-\ell}
Note that the cost model applies as we can compute the stiffness matrix exactly
since the gradients of the shape functions are constants
and the anti-derivatives of products of sine functions are known over triangles. 
The error expected by theory for the FEM
on mesh-level $\ell$ is 
$\mathcal{O}(h_\ell)=\mathcal{O}(2^{-3/2\ell})$ 
(due to the reduced regularity of the right-hand side $f$).
Thus we choose the $m_j:=3j$ as well as 
$s_\nu=\lceil 2^{\nu/(r-1)}\rceil$ for the original algorithm and
$s_\nu=\lceil 2^{\nu/(2(r-1))}\rceil$ for the symmetrized version. 
Therefore we expect that the  errors for both algorithms satisfy 
$E_N=\mathcal{O}(2^{-3/2N})=\mathcal{O}(C_N^{-1/2})$, where 
%
%{\bf why is the cost $C_N$ additive?}
%\begin{align*}
% C_N=\#\text{FEM-dof} + \#\text{Monte Carlo samples} + \#\text{terms in approximation of }A 
%\end{align*}
$C_N$ as defined in \eqref{eq:costKL1}, \eqref{eq:costKL2}
    denotes the cost of the multi-index FEM on level $N$. 
This is confirmed in Figure~\ref{fig:convMI2d}.
For the numerical experiments, 
we compare with a reference solution computed with a higher-order 
Quasi-Monte Carlo method proposed in~\cite{ml1}. 
The reference value is computed with a higher order QMC rule.\footnote{
The authors thank F. Henriquez, a PhD student at the Seminar for Applied Mathematics of ETH, 
for computing the reference value.}

To smooth out the effects of MC sampling, the plotted relative errors are averaged over 
$20$ runs of the respective multi-index algorithm
(we also plot empirical 90\%-confidence intervals for each error point).

\begin{figure}
\centering 
\includegraphics[width=0.45\textwidth]{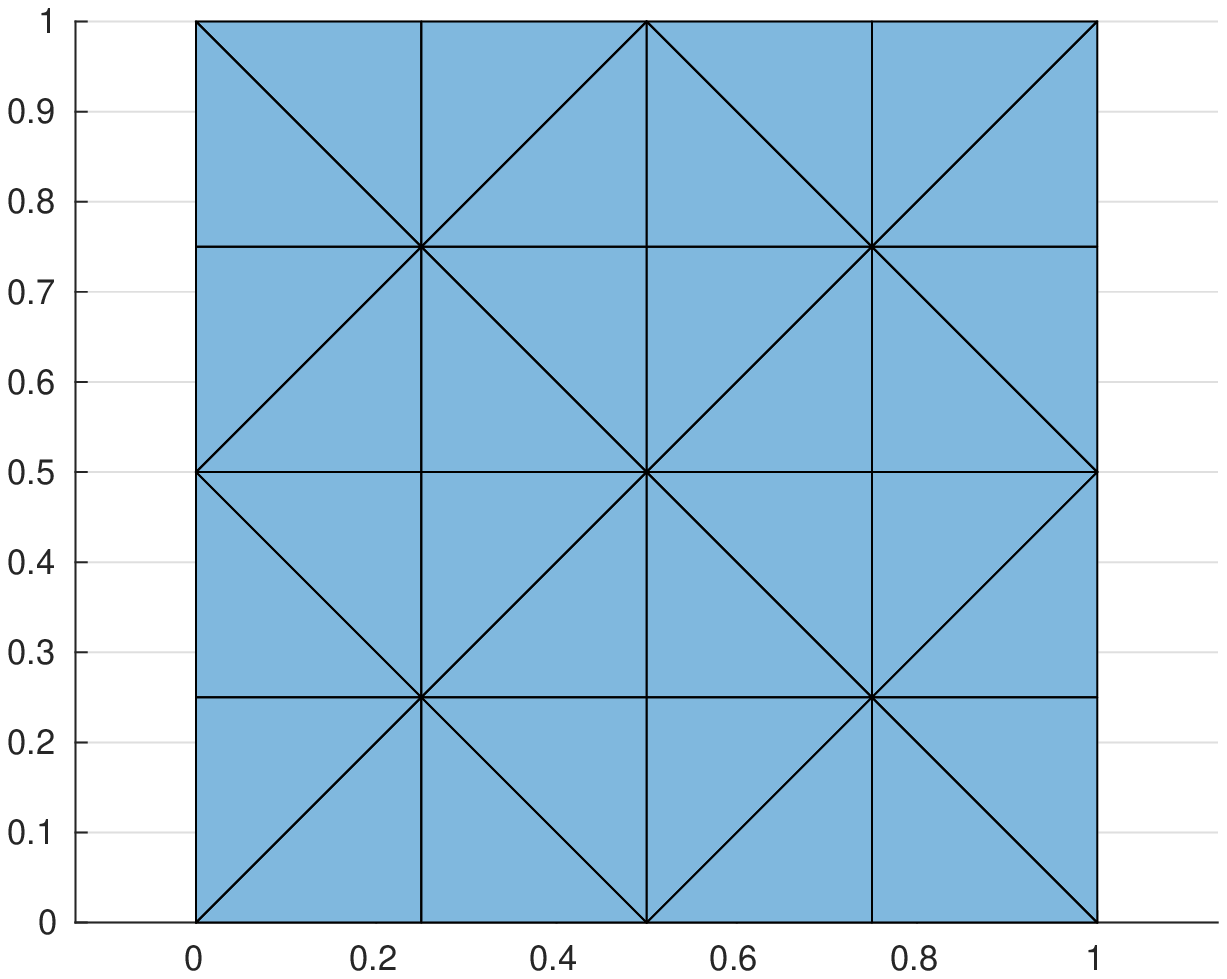}\qquad%
\includegraphics[width=0.45\textwidth]{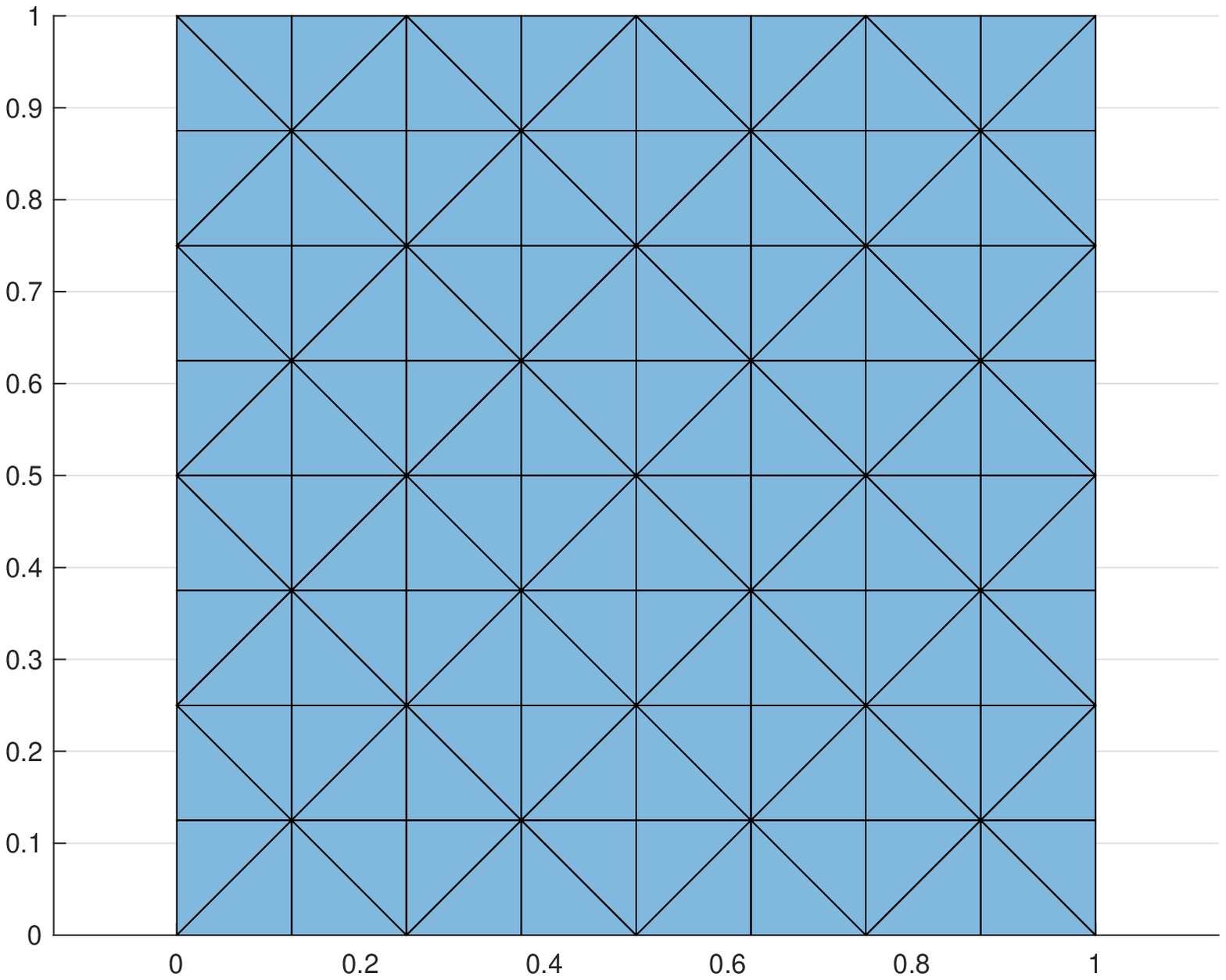}
 \caption{Two levels of mesh-refinement for the unit-square domain.}
 \label{fig:mesh}
\end{figure}

\begin{figure}
\centering
\psfrag{error}[cc][cc]{\tiny relative error}
\psfrag{N}[cc][cc]{\tiny cost of multi-index FEM $C_N$}
\psfrag{diff}{\tiny $|G-G_N|/G$}
\psfrag{ref}{\tiny $\mathcal{O}(C_N^{-1/2})$}
\includegraphics[width=0.45\textwidth]{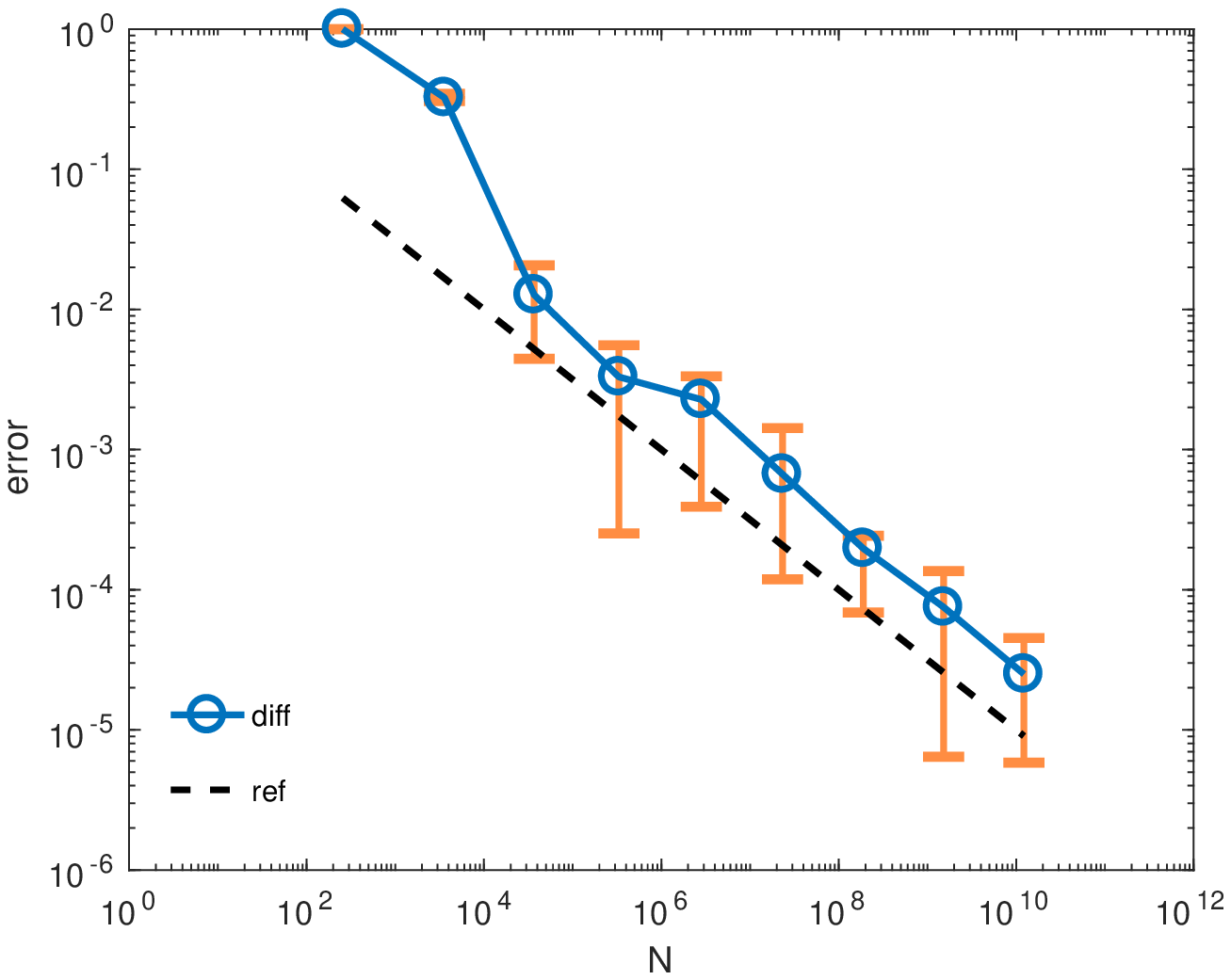}%
\psfrag{diff}{\tiny $|G-G_N^{\rm symm}|/G$}%
\hspace{5mm}
\includegraphics[width=0.45\textwidth]{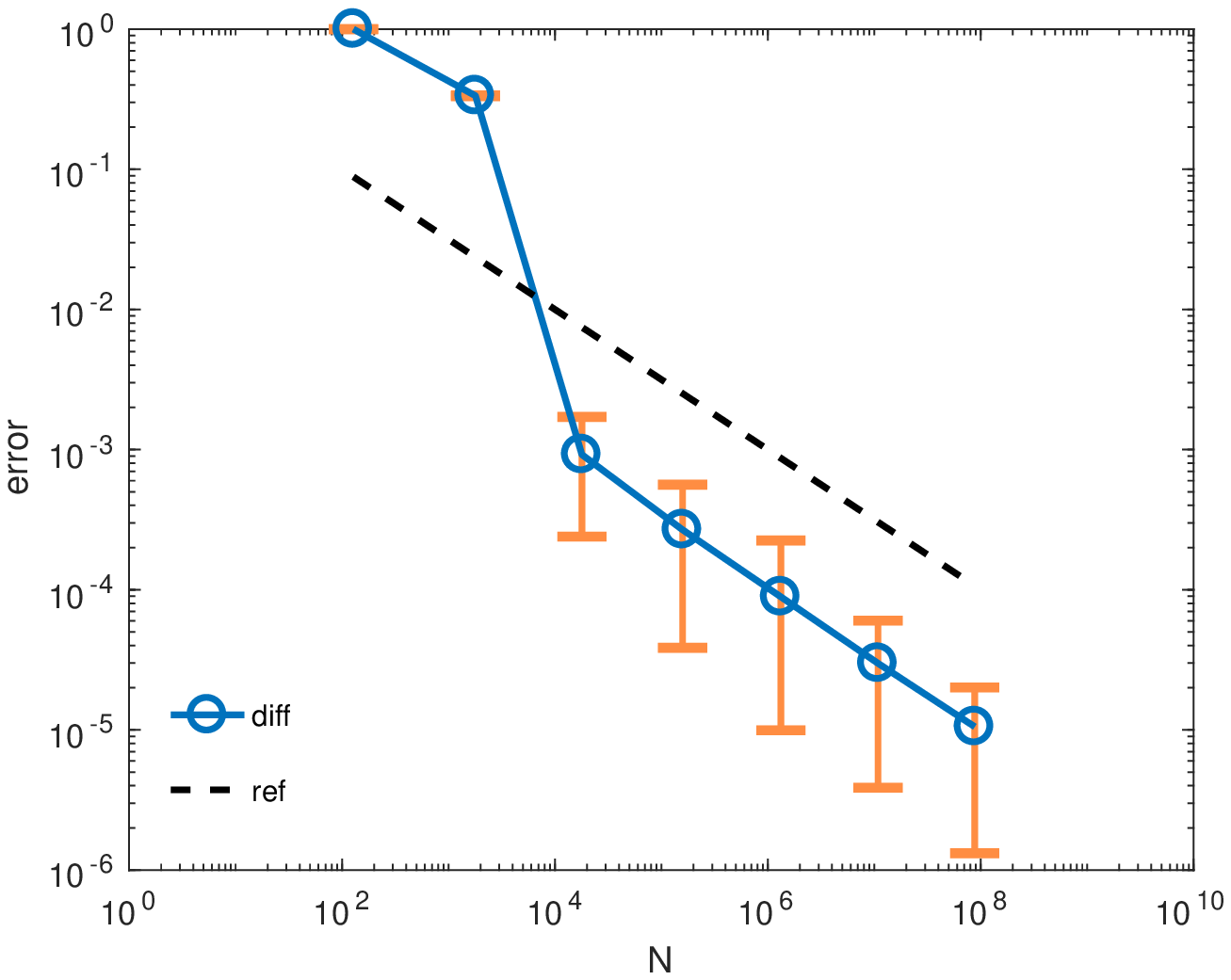}
\caption{Averaged relative errors or the multi-index algorithms with respect to the reference solution $G$  
         compared with the theoretical error bound $\mathcal{O}(C_N^{-1/2})$ (original algorithm (left) and symmetrized version (right)).
         Both plots shows the average error curve of $20$ runs of the algorithms as well as
         the empirical 90\%-confidence intervals of the computed error. 
         The symmetrized version reaches the accuracy of the non-symmetric version already for $N=6$ instead of $N=9$.}
 \label{fig:convMI2d}
\end{figure}

\subsection{Local mesh refinement}
The regularity of the exact solution can also be reduced by 
     re-entrant corners with corresponding reduced rates of FE convergence for quasi uniform meshes.
As is well-known (e.g. \cite{BacutaNistorZikatanov2005,AdlerNistor2015}), 
in two space dimensions, this is due to point-singularities in the solution.
These can be compensated by a-priori local mesh-refinement in $D$. 
Using hierarchies of so-called graded or suitable bisection-tree meshes,
and expressing regularity of solutions in terms of weighted $H^2(D)$ spaces,
the present regularity and FE convergence analysis remains valid verbatim,
with full convergence rates (see Section~\ref{sec:RedReg} for details).

This is demonstrated on the following example on the L-shaped domain $D:=[-1,1]^2\setminus (1,0)\times(-1,0)$ depicted in Figure~\ref{fig:meshLshape} with the same coefficient and PDE as in the previous example. However, as a right-hand side, we use $f=1$ and the quantity of interest is now defined by $G(u):=\int_{(0,1/2)^2} u\,dx$.
The graded meshes $\TT^\ell$ from Proposition~\ref{prop:RegKondr} are generated by newest vertex bisection by iteratively refining all elements $T$ which are coarser than the theoretically optimal grading of  
$\mathcal{O}({\rm dist}(\{0\},T)^{1/3}h_\ell)$. This results in a sequence of meshes with $\# (\TT^\ell) =\mathcal{O}(2^{2/3\ell})$. Figure~\ref{fig:meshLshape} shows one instance of this sequence of meshes. Figure~\ref{fig:meshsize} confirms the correct distribution of element diameters within the mesh.
\begin{figure}

\centering
\includegraphics[width=0.45\textwidth]{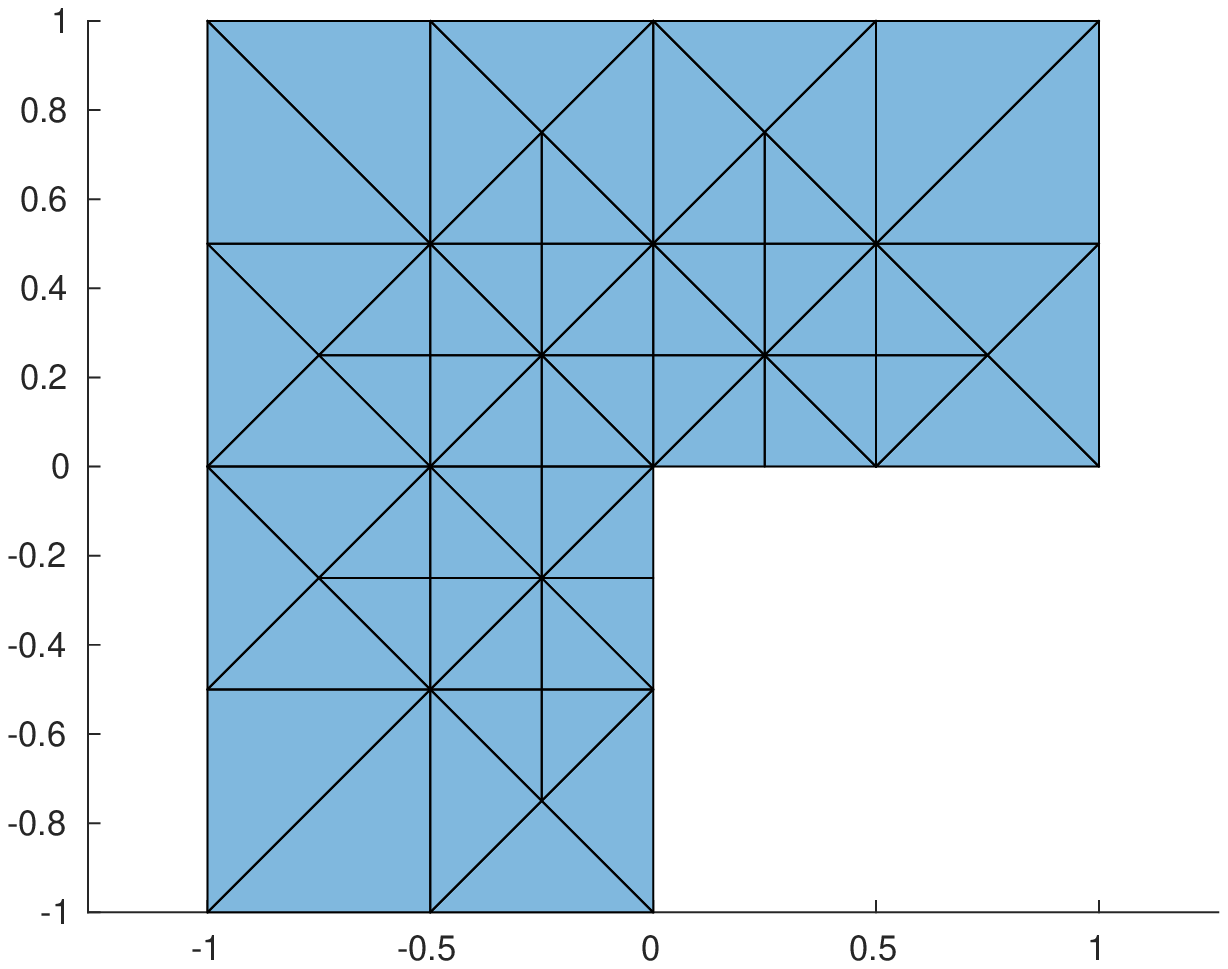}\qquad%
\includegraphics[width=0.45\textwidth]{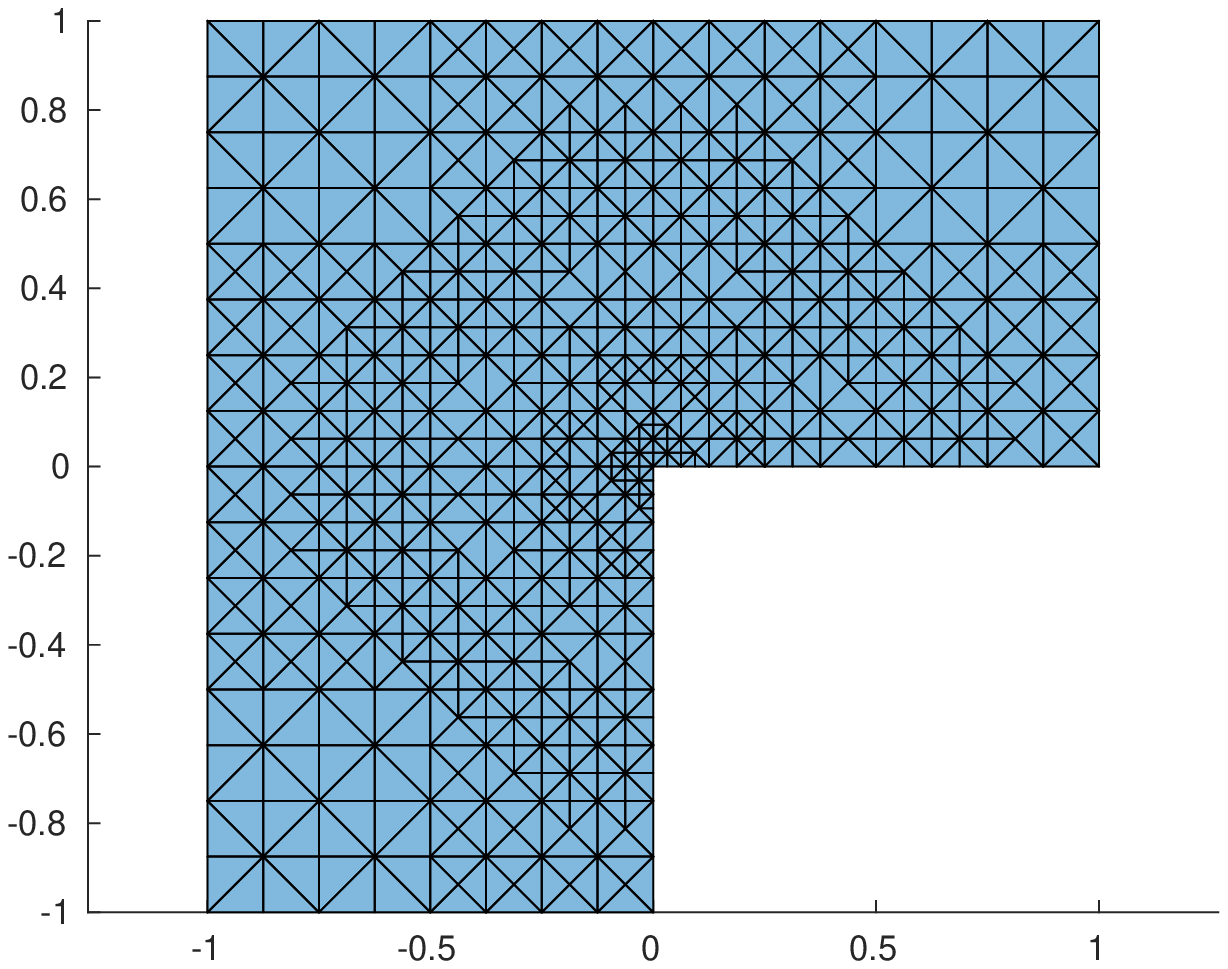}
 \caption{Two levels of graded mesh-refinement for the L-shaped domain.}
 \label{fig:meshLshape}
\end{figure}

\begin{figure}
\psfrag{elements}[cc][cc]{\tiny number of elements}
\psfrag{N}[cc][cc]{\tiny discretization parameter $N$}
\centering
\includegraphics[width=0.45\textwidth]{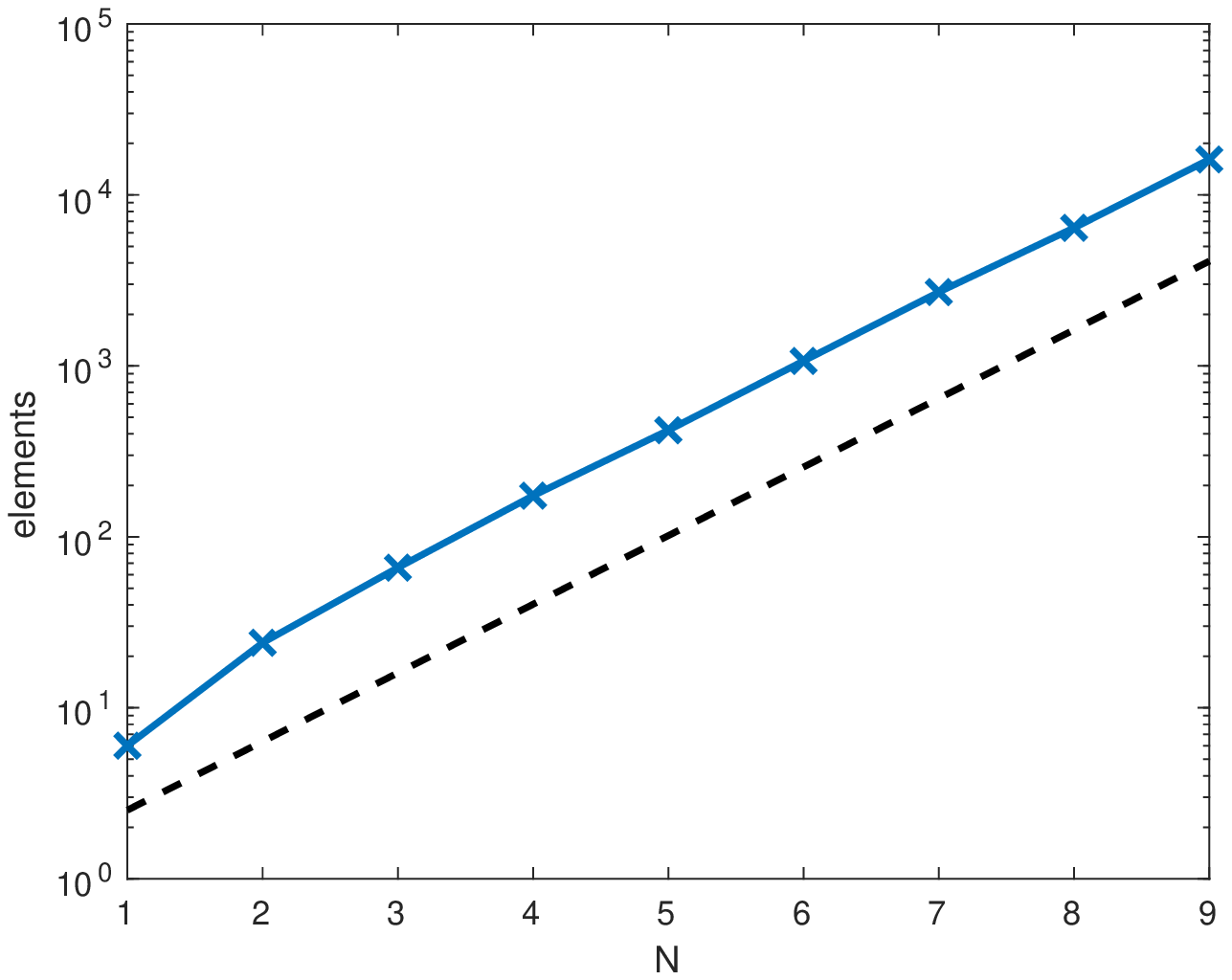}\hspace{5mm}%
\psfrag{dist}[cc][cc]{\tiny distance from origin}%
\psfrag{elsize}[cc][cc]{\tiny element diameter}%
\includegraphics[width=0.5\textwidth]{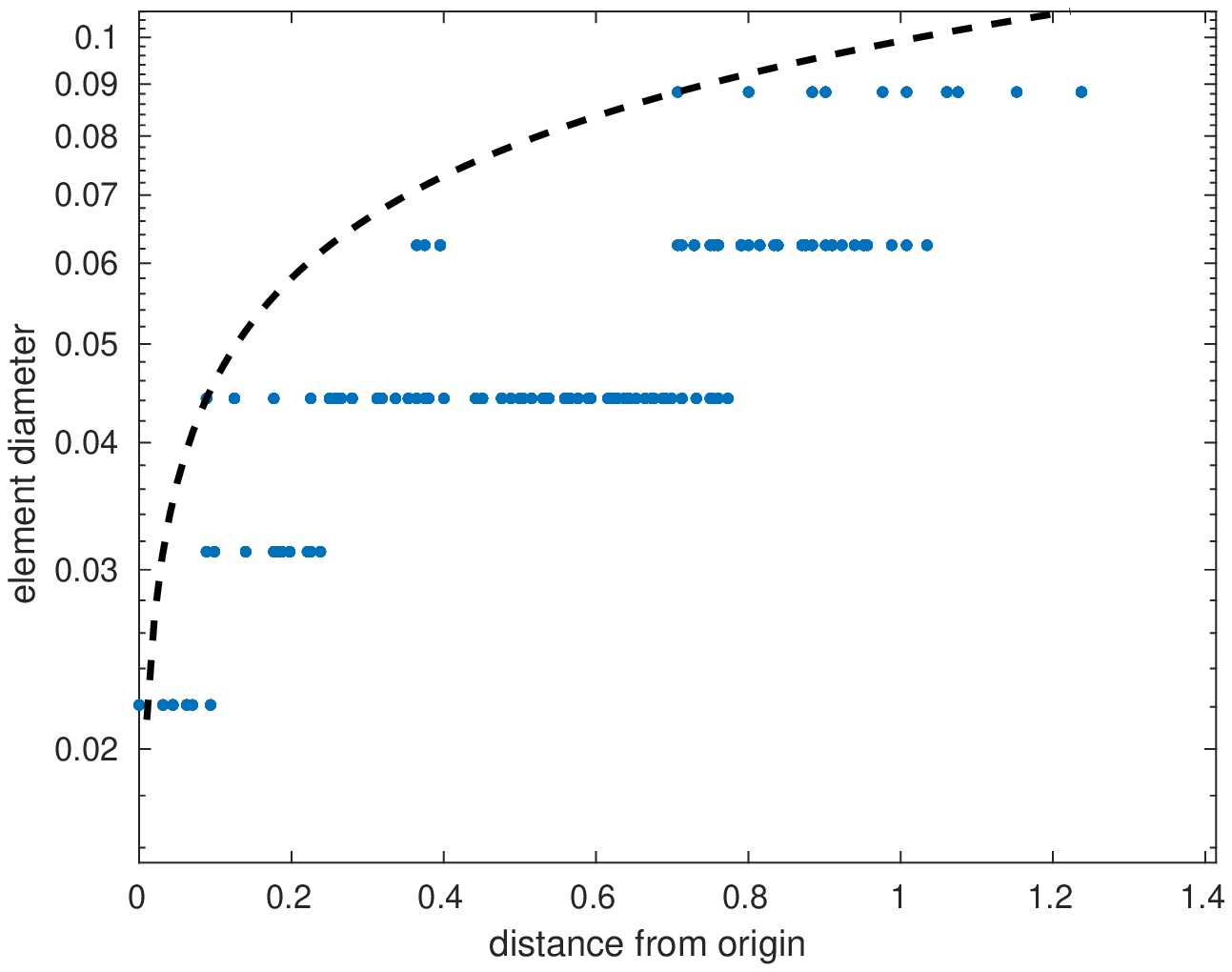}
 \caption{We see statistics of several graded meshes for levels $N=1,\ldots,9$. The left-hand side plot shows that the number of elements behaves as $\mathcal{O}(2^{2/3N})$. The right-hand side plot shows for the mesh $\TT^8$ that the distribution of element diameters with respect to their distance to the singularity behaves like $\mathcal{O}({\rm dist}(\{0\},T)^{1/3}h_N)$, where $h_N$ is the maximal element diameter.}
 \label{fig:meshsize}
\end{figure}

The performance of the multi-index Monte Carlo method is shown in Figure~\ref{fig:convMILshape2d} for the symmetrized version. Since we aim for the full convergence rate $\mathcal{O}(2^{-2N})$ in this example, we choose the level parameters $m_j:=8/3j$ as well as $s_\nu=\lceil 2^{2\nu/(3r-3)}\rceil$. Due to the much higher number of Monte-Carlo samples required in this example, we only performed four Monte-Carlo runs and show the averaged error in Figure~\ref{fig:convMILshape2d}. 
We observe optimal convergence behavior despite the presence of corner singularities 
in the exact solution. 
As a reference solution, we use the approximation on the next higher level~$N=14$.
\begin{figure}
\centering
\psfrag{error}[cc][cc]{\tiny relative error}
\psfrag{N}[cc][cc]{\tiny cost of multi-index FEM $C_N$}
\psfrag{diff}{\tiny $|G-G_N|/G$}
\psfrag{ref}{\tiny $\mathcal{O}(C_N^{-1/2})$}
\psfrag{diff}{\tiny $|G-G_N^{\rm symm}|/G$}%
\hspace{5mm}
\includegraphics[width=0.65\textwidth]{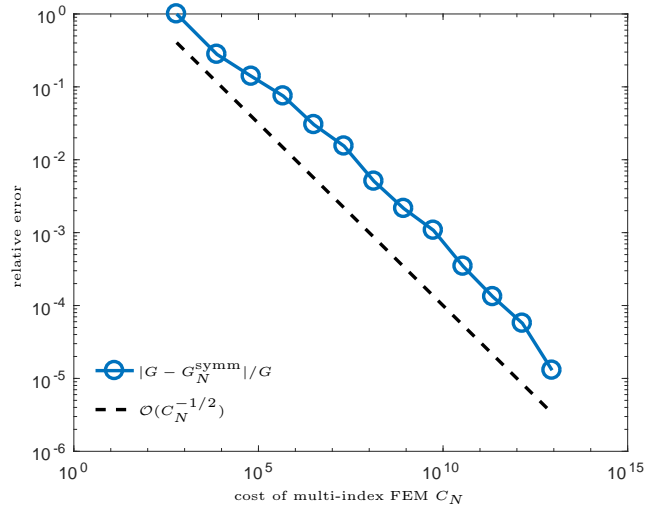}
\caption{Averaged relative errors or the multi-index algorithms with respect to the reference solution $G$  
         compared with the theoretical error bound $\mathcal{O}(C_N^{-1/2})$.
         The error curve is the average of four Monte-Carlo runs.}
 \label{fig:convMILshape2d}
\end{figure}
\section{Conclusion}
\label{S:Concl}
The present work shows that the multi-index Monte Carlo algorithm
with the indices being the discretization parameters of the finite element method, 
of the Monte Carlo method, and 
of the approximation of the random field is superior to its multi-level counterpart. 
The error estimates are rigorous and the product error estimate from Theorem~\ref{thm:proderr} 
might be of independent interest. 
The method can be combined with existing multi-index techniques which focus on 
sparse grids in the physical domain $D$ to further reduce the computational effort
under the provision of appropriate extra regularity.

\bibliographystyle{siamplain}
\bibliography{literature}

\begin{thebibliography}{10}

\bibitem{AdlerNistor2015}
{\sc J.~Adler and V.~Nistor}, {\em Graded mesh approximation in weighted
  {S}obolev spaces and elliptic equations in 2{D}}, Math. Comp., 84 (2015),
  pp.~2191--2220, \url{https://doi.org/10.1090/S0025-5718-2015-02934-2},
  \url{http://dx.doi.org/10.1090/S0025-5718-2015-02934-2}.

\bibitem{BLN2017}
{\sc C.~Bacuta, H.~Li, and V.~Nistor}, {\em Differential operators on domains
  with conical points: precise uniform regularity estimates}, Rev. Roumaine
  Math. Pures Appl., 62 (2017), pp.~383--411.

\bibitem{BacutaNistorZikatanov2005}
{\sc C.~Bacuta, V.~Nistor, and L.~Zikatanov}, {\em Improving the rate of
  convergence of `high order finite elements' on polygons and domains with
  cusps}, Numer. Math., 100 (2005), pp.~165--184,
  \url{https://doi.org/10.1007/s00211-005-0588-3},
  \url{http://dx.doi.org/10.1007/s00211-005-0588-3}.

\bibitem{BSZ11_149}
{\sc A.~Barth, C.~Schwab, and N.~Zollinger}, {\em {Multi-Level Monte Carlo
  Finite Element Method for elliptic PDEs with stochastic coefficients}},
  Numerische Mathematik, 119 (2011), pp.~123--161,
  \url{https://doi.org/http://dx.doi.org/10.1007/s00211-011-0377-0}.

\bibitem{BunGriebSpG_Acta2004}
{\sc H.-J. Bungartz and M.~Griebel}, {\em Sparse grids}, Acta Numer., 13
  (2004), pp.~147--269, \url{https://doi.org/10.1017/S0962492904000182},
  \url{https://doi.org/10.1017/S0962492904000182}.

\bibitem{CGST2011}
{\sc K.~A. Cliffe, M.~B. Giles, R.~Scheichl, and A.~L. Teckentrup}, {\em
  Multilevel {M}onte {C}arlo methods and applications to elliptic {PDE}s with
  random coefficients}, Comput. Vis. Sci., 14 (2011), pp.~3--15,
  \url{https://doi.org/10.1007/s00791-011-0160-x},
  \url{https://doi.org/10.1007/s00791-011-0160-x}.

\bibitem{CDS10_404}
{\sc A.~Cohen, R.~DeVore, and C.~Schwab}, {\em Analytic regularity and
  polynomial approximation of parametric and stochastic elliptic pdes},
  Analysis and Applications, 9 (2010), pp.~11--47,
  \url{https://doi.org/http://dx.doi.org/10.1142/S0219530511001728}.

\bibitem{ml1}
{\sc J.~Dick, R.~N. Gantner, Q.~T. Le~Gia, and C.~Schwab}, {\em Multilevel
  higher-order quasi-{M}onte {C}arlo {B}ayesian estimation}, Math. Models
  Methods Appl. Sci., 27 (2017), pp.~953--995,
  \url{https://doi.org/10.1142/S021820251750021X}.

\bibitem{hoqmc}
{\sc J.~Dick, F.~Y. Kuo, Q.~T. Le~Gia, D.~Nuyens, and C.~Schwab}, {\em Higher
  order {QMC} {P}etrov-{G}alerkin discretization for affine parametric operator
  equations with random field inputs}, SIAM J. Numer. Anal., 52 (2014),
  pp.~2676--2702, \url{https://doi.org/10.1137/130943984},
  \url{http://dx.doi.org/10.1137/130943984}.

\bibitem{ml2}
{\sc J.~Dick, F.~Y. Kuo, Q.~T. Le~Gia, and C.~Schwab}, {\em Multilevel higher
  order {QMC} {P}etrov-{G}alerkin discretization for affine parametric operator
  equations}, SIAM J. Numer. Anal., 54 (2016), pp.~2541--2568,
  \url{https://doi.org/10.1137/16M1078690}.

\bibitem{EigEtAl_AdMLMC_JUQ2016}
{\sc M.~Eigel, C.~Merdon, and J.~Neumann}, {\em An adaptive multilevel {M}onte
  {C}arlo method with stochastic bounds for quantities of interest with
  uncertain data}, SIAM/ASA J. Uncertain. Quantif., 4 (2016), pp.~1219--1245,
  \url{https://doi.org/10.1137/15M1016448},
  \url{https://doi.org/10.1137/15M1016448}.

\bibitem{expcomp1}
{\sc R.~N. Gantner, L.~Herrmann, and C.~Schwab}, {\em Quasi-{M}onte {C}arlo
  integration for affine-parametric, elliptic {PDE}s: local supports and
  product weights}, SIAM J. Numer. Anal., 56 (2018), pp.~111--135,
  \url{https://doi.org/10.1137/16M1082597},
  \url{https://doi.org/10.1137/16M1082597}.

\bibitem{giles}
{\sc M.~B. Giles}, {\em Multilevel {M}onte {C}arlo path simulation}, Oper.
  Res., 56 (2008), pp.~607--617, \url{https://doi.org/10.1287/opre.1070.0496}.

\bibitem{GSU2016}
{\sc I.~G. Graham, R.~Scheichl, and E.~Ullmann}, {\em Mixed finite element
  analysis of lognormal diffusion and multilevel {M}onte {C}arlo methods},
  Stoch. Partial Differ. Equ. Anal. Comput., 4 (2016), pp.~41--75,
  \url{https://doi.org/10.1007/s40072-015-0051-0},
  \url{https://doi.org/10.1007/s40072-015-0051-0}.

\bibitem{mi}
{\sc A.-L. Haji-Ali, F.~Nobile, and R.~Tempone}, {\em Multi-index {M}onte
  {C}arlo: when sparsity meets sampling}, Numer. Math., 132 (2016),
  pp.~767--806, \url{https://doi.org/10.1007/s00211-015-0734-5}.

\bibitem{mimi1}
{\sc A.-L. Haji-Ali and R.~Tempone}, {\em Multilevel and {M}ulti-index {M}onte
  {C}arlo methods for the {M}c{K}ean-{V}lasov equation}, Stat. Comput., 28
  (2018), pp.~923--935, \url{https://doi.org/10.1007/s11222-017-9771-5},
  \url{https://doi.org/10.1007/s11222-017-9771-5}.

\bibitem{expcomp2}
{\sc H.~Harbrecht, M.~Peters, and M.~Siebenmorgen}, {\em Efficient
  approximation of random fields for numerical applications}, Numer. Linear
  Algebra Appl., 22 (2015), pp.~596--617,
  \url{https://doi.org/10.1002/nla.1976},
  \url{https://doi.org/10.1002/nla.1976}.

\bibitem{KSSS17_2112}
{\sc F.~Y. Kuo, R.~Scheichl, C.~Schwab, I.~H. Sloan, and E.~Ullmann}, {\em
  Multilevel quasi-monte carlo methods for lognormal diffusion problems}, Math.
  Comp., 86 (2017), pp.~2827--2860,
  \url{https://doi.org/http://dx.doi.org/10.1090/mcom/3207}.

\bibitem{multilevel}
{\sc F.~Y. Kuo, C.~Schwab, and I.~H. Sloan}, {\em Multi-level quasi-{M}onte
  {C}arlo finite element methods for a class of elliptic {PDE}s with random
  coefficients}, Found. Comput. Math., 15 (2015), pp.~411--449,
  \url{https://doi.org/10.1007/s10208-014-9237-5},
  \url{http://dx.doi.org/10.1007/s10208-014-9237-5}.

\bibitem{MSS18_2617}
{\sc F.~M{\"u}ller, D.~Sch{\"o}tzau, and C.~Schwab}, {\em Discontinuous
  galerkin methods for acoustic wave propagation in polygons}, Journal of
  Scientific Computing, 77 (2018), pp.~1909--1935,
  \url{https://doi.org/https://doi.org/10.1007/s10915-018-0706-x}.

\bibitem{afem}
{\sc R.~H. Nochetto, K.~G. Siebert, and A.~Veeser}, {\em Theory of adaptive
  finite element methods: an introduction}, in Multiscale, nonlinear and
  adaptive approximation, Springer, Berlin, 2009, pp.~409--542.

\bibitem{mimi2}
{\sc C.~Reisinger and Z.~Wang}, {\em Analysis of multi-index {M}onte {C}arlo
  estimators for a {Z}akai {SPDE}}, J. Comput. Math., 36 (2018), pp.~202--236,
  \url{https://doi.org/10.4208/jcm.1612-m2016-0681},
  \url{https://doi.org/10.4208/jcm.1612-m2016-0681}.

\bibitem{ami}
{\sc P.~Robbe, D.~Nuyens, and S.~Vandewalle}, {\em A dimension-adaptive
  multi-index {M}onte {C}arlo method applied to a model of a heat exchanger},
  241 (2018), pp.~429--445.

\bibitem{SST2017}
{\sc R.~Scheichl, A.~M. Stuart, and A.~L. Teckentrup}, {\em Quasi-{M}onte
  {C}arlo and multilevel {M}onte {C}arlo methods for computing posterior
  expectations in elliptic inverse problems}, SIAM/ASA J. Uncertain. Quantif.,
  5 (2017), pp.~493--518, \url{https://doi.org/10.1137/16M1061692},
  \url{https://doi.org/10.1137/16M1061692}.

\bibitem{SG11_518}
{\sc C.~Schwab and C.~Gittelson}, {\em Sparse tensor discretizations of
  high-dimensional parametric and stochastic pdes}, Acta Numerica, 20 (2011),
  pp.~291--467,
  \url{https://doi.org/http://dx.doi.org/10.1017/S0962492911000055}.

\end{thebibliography}
\end{document}